\theoremstyle{plain}
\newtheorem{maintheorem}{Theorem}
\newtheorem{maincorollary}[maintheorem]{Corollary}
\newtheorem{theorem}{Theorem }[section]
\newtheorem{proposition}[theorem]{Proposition}
\newtheorem{lemma}[theorem]{Lemma}
\newtheorem{corollary}[theorem]{Corollary}
\newtheorem{claim}[theorem]{Claim}
\theoremstyle{definition}
\newtheorem{definition}[theorem]{Definition}
\newcommand{\D}{\ensuremath{\mathbb{D}}}
\newcommand{\R}{\ensuremath{\mathbb{R}}}
\newcommand{\Z}{\ensuremath{\mathbb{Z}}}
\newcommand{\N}{\ensuremath{\mathbb{N}}}
\newcommand{\T}{\ensuremath{\mathbb{T}}}
\newcommand{\Jac}{\operatorname{Jac}}
\newcommand{\ds}{\displaystyle}
\newcommand{\ba}{{\textbf{a}}}
\newcommand{\bb}{{\textbf{b}}}
\newcommand{\bc}{{\textbf{c}}}
\newcommand{\bv}{{\textbf{v}}}
\newcommand{\bu}{{\textbf{u}}}
\newcommand{\bt}{{\textbf{t}}}
\newcommand{\sC}{C}
\newcommand{\sE}{E}
\newcommand{\diam}{\operatorname{diam}}
\newcommand{\torus}{{\mathbb{T}^u}}
\newcommand{\fm}{\mathfrak{m}}
\newcommand{\fpsi}{\psi}
\begin{document}
\title[Higher-dimensional Attractors with ACIP]{Higher-dimensional Attractors with absolutely continuous invariant probability}
	
\date{}
	
\author{Carlos Bocker}
\address[Carlos Bocker]{Department of Mathematics, UFPB\\ Jo\~ao Pessoa-PB, Brazil}
\email{cbocker@gmail.com}

\author{Ricardo T. Bortolotti}
\address[Ricardo T. Bortolotti]{Department of Mathematics, UFPE\\ Recife-PE, Brazil}
\email{ricardo@dmat.ufpe.br}
	
\begin{abstract}
Consider a dynamical system $T:\torus\times \R^{d} \rightarrow \torus\times \R^{d} $ given by $ T(x,y) = (\sE (x), \sC(y) + f(x) )$, where $\sE$ is a linear expanding map of $\torus$, $\sC$ is a linear contracting map of $\R^d$ and $f$ is in $C^2(\torus,\R^d)$. We prove that if $T$ is volume expanding and $u\geq d$, then for every $\sE$ there exists an open set $\mathcal{U}$  of pairs $(\sC,f)$ for which the corresponding dynamic $T$  admits an absolutely continuous invariant probability.
		
A geometrical characteristic of transversality between self-intersections of images of $\torus\times\{ 0 \}$ is present in the dynamic of the maps in $\mathcal{U}$. In addition, we give a condition between $\sE$ and $\sC$ under which it is possible to perturb $f$ to obtain a pair $(\sC,\tilde{f})$ in $\mathcal{U}$. 
\end{abstract}
	
\maketitle

\section{Introduction}
	
In the study of ergodic theory, hyperbolic attractors for smooth invertible maps admits only singular invariant measures with respect to volume, since they have zero Lebesgue measure \cite{bowen}. For non-invertible maps, absolutely continuous invariant measures do exist and are usually associated with the positivity of the Lyapunov exponents (see \cite{Alves, keller, solano}).
	
There are attractors on surfaces with one negative Lyapunov exponent that admit absolutely continuous invariant probabilities. Fat backer maps \cite{fat.baker.map} and fat solenoidal attractors \cite{tsujii.ssd, tsujii.fsa} are examples of such maps.
	
	
	
In the present work, we study higher-dimensional attractors with $d$ negative Lyapunov exponents that admit absolutely continuous invariant measures.
	
We consider volume expanding skew-products $T:\torus\times \R^{d} \rightarrow \torus\times \R^{d} $ given by
$$ T(x,y) = \big ( \sE(x) , \sC(y) + f(x)  \big)$$
where $\sE:\torus\rightarrow\torus$ is an expanding map, induced by a linear map $E:\R^u \rightarrow \R^u$ that preserves the lattice $\Z^u$, $\torus = \R^u / \Z^u$, $C$ a linear contracting map of $\R^d$ and $f$ is a $C^{2}$ function of $\torus$ into $\R^{d}$.
	
Considering some constant $K>0$ for which $M= \torus \times [-K,K]^d$ satisfies $T(M) \subset M$ we get the attractor $\Lambda=\cap_{n\geq 0}{T^n(M)}$. Since the  restriction of $T$ to $\Lambda$ is a transitive hyperbolic endomorphism, it admits a unique SRB measure $\mu_T$ supported on $\Lambda$. The goal of this work is to give conditions that guarantee the absolute continuity of $\mu_T$ with respect to the volume measure of $\torus \times \R^d$.
	
Let $E(u)$ be the set of linear expanding maps of $\torus$ and $C(d)$ the set of linear contractions of $\R^d$. Denoting $T=T(\sE,\sC,f)$,
the first result in this work is:
	
\begin{maintheorem}\label{teo.a}
Given integers $u \geq  d $ and an expanding map $\sE\in E(u)$, there exists a nonempty open subset $\mathcal{U}$ of $C(d) \times C^2(\torus, \R^d)$ such that the corresponding SRB measure $\mu_{T}$ of every map $T=T(\sE,\sC,f)$ for $(\sC,f)\in\mathcal{U}$ is absolutely continuous with respect to the volume of $\torus\times\R^d$.
\end{maintheorem}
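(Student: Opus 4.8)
I would pass to a symbolic model for $\mu_T$, reduce absolute continuity to a Frostman-type pair estimate, and isolate the transversality condition that cuts out $\mathcal{U}$. Let $\ell=|\det E|$ and let $g_1,\dots,g_\ell\colon\torus\to\torus$ be the affine inverse branches of $\sE$; each has constant Jacobian $\ell^{-1}$ and the images of $g_{w_{n}}\circ\cdots\circ g_{w_1}$ over $w\in\{1,\dots,\ell\}^n$ tile $\torus$. For $\ba=(a_1,a_2,\dots)$ in the shift space $\Sigma=\{1,\dots,\ell\}^{\N}$ set $h_{\ba}(x)=\sum_{j\ge 1}\sC^{j-1}f\big(g_{a_j}\circ\cdots\circ g_{a_1}(x)\big)$; the series converges and the maps $h_{\ba}$ are uniformly Lipschitz, say with constant $L$ (for this only $f\in C^1$ and $\|\sC\|\cdot\|E^{-1}\|<1$ are needed, the latter holding since $\sE$ expands and $\sC$ contracts). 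The graphs of the $h_{\ba}$ foliate $\Lambda$, the fibres $\{x\}\times\R^d$ are exactly the stable manifolds, and the SRB measure is $\mu_T=\Psi_*(\mathrm{Leb}_{\torus}\times\beta)$ with $\Psi(x,\ba)=(x,h_{\ba}(x))$ and $\beta$ the uniform Bernoulli measure on $\Sigma$ (because $\sE$ is linear, $\mathrm{Leb}_{\torus}$ is the base SRB measure and its $\ell$ preimages carry equal weight). Since $\mathrm{Leb}_{\torus\times\R^d}$ disintegrates over the stable fibres with Lebesgue conditionals and $\torus$-marginal $\mathrm{Leb}_{\torus}$, which is also the $\torus$-marginal of $\mu_T$, it suffices to control the fibre conditionals $(\ba\mapsto h_{\ba}(x))_*\beta$.

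I would then use the differentiation criterion: $\mu_T\ll\mathrm{vol}$ as soon as $\underline{D}\mu_T(z)=\liminf_{r\to0}\mu_T(B_r(z))/\mathrm{vol}(B_r(z))$ is finite $\mu_T$-a.e., and, by Fatou, this follows from $\sup_{0<r\le 1}r^{-(u+d)}(\mu_T\times\mu_T)\{(z,z'):|z-z'|<r\}<\infty$. Expanding this pair quantity through $\Psi$, integrating out one base variable over a ball of radius $r$ (a factor $\lesssim r^u$) and using that the $h_{\ba}$ are uniformly Lipschitz, one reduces to
$$\int_{\torus}(\beta\times\beta)\big\{(\ba,\bc):|h_{\ba}(x)-h_{\bc}(x)|<C'r\big\}\,dx\ \lesssim\ r^d,\qquad C'=1+L.\tag{$\star$}$$
Decompose $\Sigma\times\Sigma$ by the first index $k$ at which $\ba,\bc$ disagree, write $\ba=\mathbf{w}i\bb$, $\bc=\mathbf{w}i'\bb'$ with $|\mathbf{w}|=k-1$ and $i\ne i'$, and observe that $h_{\ba}(x)-h_{\bc}(x)=\sC^{k-1}\Delta_{i,i',\bb,\bb'}\big(g_{w_{k-1}}\cdots g_{w_1}(x)\big)$, where $\Delta_{i,i',\bb,\bb'}(y):=h_{(i,\bb)}(y)-h_{(i',\bb')}(y)$. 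Summing cylinder weights over the $\ell^{k-1}$ prefixes $\mathbf{w}$ and changing variables by the affine map $y=g_{w_{k-1}}\cdots g_{w_1}(x)$ — which has no distortion precisely because $\sE$ is linear and whose images tile $\torus$ — reduces $(\star)$ to
$$\sum_{k\ge1}\ell^{-(k-1)}\sum_{i\ne i'}\ell^{-2}\,\nu_{i,i'}\big(\sC^{-(k-1)}B_{C'r}(0)\big)\ \lesssim\ r^d,\tag{$\star\star$}$$
where $\nu_{i,i'}$ is the push-forward of $\mathrm{Leb}_{\torus}\times\beta\times\beta$ under $(y,\bb,\bb')\mapsto\Delta_{i,i',\bb,\bb'}(y)$.

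I would take $\mathcal{U}$ to be (an open subset of) the set of volume expanding pairs $(\sC,f)$ for which every $\nu_{i,i'}$ is absolutely continuous with bounded density — the geometric content being transversality of the two ``sheets'' over the branches $g_i,g_{i'}$ along their self-intersections, as announced. Granting this, $\nu_{i,i'}\big(\sC^{-(k-1)}B_{C'r}(0)\big)\le\min\{1,\,N\,r^d\,|\det\sC|^{-(k-1)}\}$; substituting in $(\star\star)$ and splitting the sum at $k-1\approx d\log(1/r)/\log(1/|\det\sC|)$, the small-$k$ part is $\lesssim r^d\sum_{m\ge0}(\ell|\det\sC|)^{-m}=O(r^d)$ because $T$ is volume expanding, i.e.\ $\ell|\det\sC|>1$, and the large-$k$ part is $\lesssim\ell^{-K}\approx r^{\,d\log\ell/\log(1/|\det\sC|)}=o(r^d)$, again by $\ell|\det\sC|>1$. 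This yields $(\star\star)$, hence $(\star)$, hence $\mu_T\ll\mathrm{vol}$. Bounded density of $\nu_{i,i'}$ would follow, via the coarea formula, from a quantitative transversality (a uniform lower bound on the singular values of the derivatives $D_y\Delta_{i,i',\bb,\bb'}$ over all $\bb,\bb'$, or a weaker version tolerating controlled near-tangencies, bootstrapped through the recursion $\Delta_{i,i',\bb,\bb'}(y)=\big(f(g_iy)-f(g_{i'}y)\big)+\sC\big(h_{\bb}(g_iy)-h_{\bb'}(g_{i'}y)\big)$); such conditions are given by uniform inequalities and are therefore open in $C(d)\times C^2(\torus,\R^d)$, which gives openness of $\mathcal{U}$ (here $C^2$, rather than merely $C^1$, is used to control $D_yh_{\bb}$ and hence the transversality constant).

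The main obstacle is nonemptiness: producing, for the \emph{given} $\sE$, one pair $(\sC,f)$ at which transversality holds, openness then doing the rest. This is where $u\ge d$ is essential — the maps $\Delta_{i,i',\bb,\bb'}\colon\torus\to\R^d$ run from a $u$-manifold to $\R^d$ and can be submersion-transverse only when $u\ge d$. I would choose $\sC$ as contracting as volume expansion permits (for instance $\sC=\sigma\,\mathrm{Id}$ with $\sigma$ slightly above $\ell^{-1/d}$, or with prescribed singular values) and then build $f\in C^2$ so that the ``head'' maps $x\mapsto f(g_ix)-f(g_{i'}x)$ are submersions for every $i\ne i'$ — achievable for $u\ge d$ by a suitable trigonometric polynomial — with derivative large enough that the ``tail'' term $\sC\big(h_{\bb}(g_ix)-h_{\bb'}(g_{i'}x)\big)$, whose size is governed by $\|\sC\|$ and $\|E^{-1}\|$, cannot destroy transversality of $\Delta_{i,i',\bb,\bb'}$. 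Making these two requirements simultaneously satisfiable is exactly where a relation between the expansion of $\sE$ and the contraction of $\sC$ appears (the ``condition between $\sE$ and $\sC$'' of the abstract); the delicate regime is $|\det\sE|$ small with $d$ large, where volume expansion forces $\|\sC\|$ close to $1$ and the tail is barely contracted, so one must exploit the inverse branches $g_i$ (the factor $E^{-1}$) and, if necessary, replace the single explicit $f$ by a more careful scale-by-scale construction.
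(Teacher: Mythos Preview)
Your overall architecture---isolate a transversality condition on $(\sC,f)$, show it is open and implies $\mu_T\ll\mathrm{vol}$, then exhibit one instance---is the paper's. The implementation of ``transversality $\Rightarrow$ absolute continuity'' differs: you run a direct pair--correlation (energy) estimate, decomposing $\Sigma\times\Sigma$ by the index of first disagreement and summing the geometric--type series $(\star\star)$; the paper instead introduces an $L^2$--flavoured semi--norm $|||\mu|||_r^2=\int_{\torus}r^{-2d}\|\mu_x\|_r^2\,dm$ on the fibre conditionals and proves a recursive ``Main Inequality'' $|||\mu|||_r^2\le\frac{\tau(q)}{J^q}\,|||\mu|||_{\underline\lambda^{-q}r}^2+C_1$, which it iterates. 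Both routes give $L^2$ density. The paper's recursion is driven by the \emph{counting} quantity $\tau(q)$ (the maximal number of depth--$q$ words not transversal to a given one on a small Markov rectangle), so the hypothesis it needs is only $\tau(q)<J^q$ for some $q$, which tolerates many near--tangencies. Your hypothesis---bounded density of every $\nu_{i,i'}$, produced by a uniform lower bound on $\fm(D_y\Delta_{i,i',\bb,\bb'})$ over \emph{all} tails $\bb,\bb'$---is a depth--$1$ uniform condition, strictly stronger, and carves out a smaller open set.

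That asymmetry is exactly why nonemptiness is the hard point in your scheme, and here the proposal has a genuine gap. The ``head dominates tail'' heuristic does not close as stated: both $D[f\circ g_i-f\circ g_{i'}]$ and the tail $\sC\,D[h_\bb\circ g_i-h_{\bb'}\circ g_{i'}]$ scale linearly in $\|Df\|$, so enlarging $f$ buys nothing; the relevant ratio is governed by $\theta=\|\sC\|\,\|E^{-1}\|$ and by how close to a submersion one can force the head \emph{uniformly in $y$ and simultaneously for all $\binom{\ell}{2}$ branch pairs}. For an arbitrary $\sE\in E(u)$ there is no argument here that this can be arranged, and your fallback ``scale--by--scale construction'' is not carried out. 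The paper avoids this entirely: it never seeks uniform depth--$1$ transversality, and for nonemptiness it proves instead (Propositions~\ref{generic.families} and~\ref{zero.measure}) that for $\sC$ in the explicit region $C(d;\sE)$ one can choose finitely many bump--type perturbations $\phi_1,\dots,\phi_s$, supported near depth--$n$ preimages so that each parameter moves essentially one sheet, such that for Lebesgue--a.e.\ $\bt$ the map $T(\sE,\sC,f_0+\sum t_k\phi_k)$ satisfies $\limsup_q q^{-1}\log\tau(q)<\log J$. The key inputs are a Jacobian lower bound for the parameter--to--sheet map (the ``$n$--generic family'' condition) and the volume estimate $\mathrm{vol}\{M\in M(d\times u):\|M\|\le 2\alpha_0,\ \fm(M)<r\}\lesssim r^{\,u-d+1}$, which together force the bad parameter set to have measure zero. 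Nonemptiness then falls out for free; your proposal would need an argument of comparable strength to fill the gap.
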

	
Two features of non-invertible maps are important along the proof of Theorem A in order to obtain absolute continuity of the invariant measure: volume-expansion of the dynamic and transversal overlaps between the images of subsets of the domain (see Theorem \ref{teo.d}).
	
Given $\sE\in E(u)$, let us consider the following subset $C(d;E)$ of $C(d)$:
$$\displaystyle C(d;\sE)=\left\{ \sC \in C(d) , |\det \sC| > |\det \sE|^{-1} \text{ and } \|\sC\| < \frac{\|\sE^{-1}\|^{-1}}{|\det \sE|^{\frac{1}{{u-d+1}}} } \right\}.$$
	
We prove that absolute continuity of $\mu_T$ is generic for $T=T(\sE,\sC,f)$ with $(\sC,f)\in C(d;\sE)\times C^2(\torus, \R^d)$. More precisely, given a finite family of perturbations $\phi_1,\cdots,\phi_s$, for each $\bt=(t_1,\cdots,t_s)\in \R^s$ we consider the function $f_{\bt} = f + t_1 \phi_1 + \cdots + t_s \phi_s$ and the corresponding dynamic
$T_{\bt}(x,y)=(\sE(x),\sC(y)  + f_{\bt}(x))$.
	
\begin{maintheorem}\label{teo.b}
Given integers $u \geq d $, $\sE\in E(u)$, $\sC\in C(d;E)$, there exists functions $\phi_k \in C^{\infty}(\torus,\R^d)$, $k=1,2,\dots,s$,
such that for any $f\in C^{2}(\torus,\R^d)$
the set of parameters $\bt=(t_1,\dots,t_s)$
for which the corresponding SRB measure $\mu_{T_{\bt}}$ is absolutely continuous
has full Lebesgue measure.
\end{maintheorem}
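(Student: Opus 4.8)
\bigskip
\noindent\emph{Strategy of proof.} The plan is to run the transversality method developed for fat solenoidal attractors, carried out here in the $d$-dimensional contracting fibre and combined with a Fubini argument in the perturbation parameter $\bt$.

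\smallskip
\noindent\textbf{Step 1: structure of $\mu_{T_\bt}$.} First I would record that, writing $m$ for normalized Lebesgue measure on $\torus$ (the unique absolutely continuous $\sE$-invariant probability), one has $\pi_*\mu_{T_\bt}=m$ for the projection $\pi\colon\torus\times\R^d\to\torus$, since $\pi\circ T_\bt=\sE\circ\pi$ and $m$ is the unique a.c.\ $\sE$-invariant measure. Hence $\mu_{T_\bt}\ll\mathrm{Leb}_{\torus\times\R^d}$ if and only if the conditional measures $\mu_{x,\bt}$ of the disintegration $\mu_{T_\bt}=\int_\torus\mu_{x,\bt}\,dm(x)$ along the fibres $\{x\}\times\R^d$ satisfy $\mu_{x,\bt}\ll\mathrm{Leb}_{\R^d}$ for $m$-a.e.\ $x$. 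These conditionals are explicit: $\mu_{x,\bt}$ is the distribution of $Y_{x,\bt}(\omega)=\sum_{n\ge1}\sC^{\,n-1}f_\bt(x_n)$, where $\omega=(x_0,x_1,\dots)$ with $x_0=x$ and $\sE(x_n)=x_{n-1}$ is a backward orbit of $\sE$ sampled from the uniform Bernoulli measure $\mathbb P_x$ on the tree of backward orbits (each of the $q:=|\det\sE|$ branches equally likely), so that $\mu_{x,\bt}=(Y_{x,\bt})_*\mathbb P_x$.

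\smallskip
\noindent\textbf{Step 2: reduction to a transversality integral.} Next, using the density criterion for absolute continuity (a finite measure $\nu$ on $\R^d$ with $\liminf_{r\to0}\nu(B(w,r))/r^{d}<\infty$ for $\nu$-a.e.\ $w$ is $\ll\mathrm{Leb}_{\R^d}$), I would reduce the theorem to proving that, for each $R>0$,
$$\int_\torus\int_{|\bt|\le R}\int_{\R^d}\liminf_{r\to0}\frac{\mu_{x,\bt}(B(w,r))}{r^{\,d}}\,d\mu_{x,\bt}(w)\,d\bt\,dm(x)<\infty ;$$
indeed, finiteness forces the inner integral to be finite for a.e.\ $(x,\bt)$, hence $\mu_{x,\bt}\ll\mathrm{Leb}_{\R^d}$ for a.e.\ $(x,\bt)$, and Fubini together with Step~1 yields $\mu_{T_\bt}\ll\mathrm{Leb}$ for Lebesgue-a.e.\ $\bt$. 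By Fatou and the identity $\int_{\R^d}\mu_{x,\bt}(B(w,r))\,d\mu_{x,\bt}(w)=(\mathbb P_x\times\mathbb P_x)\{(\omega,\omega'):|Y_{x,\bt}(\omega)-Y_{x,\bt}(\omega')|\le r\}$, this in turn follows from the uniform-in-$r$ estimate
$$\int_\torus\int_{|\bt|\le R}(\mathbb P_x\times\mathbb P_x)\big\{\,|Y_{x,\bt}(\omega)-Y_{x,\bt}(\omega')|\le r\,\big\}\,d\bt\,dm(x)\;\le\;c_R\,r^{\,d}.$$

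\smallskip
\noindent\textbf{Step 3: linearization and the splitting time.} Because $f_\bt=f+\sum_{k=1}^s t_k\phi_k$, the difference $Y_{x,\bt}(\omega)-Y_{x,\bt}(\omega')=v_0+\sum_{k=1}^s t_kv_k$ is affine in $\bt$, with $v_k=v_k(x,\omega,\omega')=\sum_{n\ge1}\sC^{\,n-1}(\phi_k(x_n)-\phi_k(x'_n))\in\R^d$. For the $d\times s$ matrix $W=[\,v_1\mid\cdots\mid v_s\,]$ an elementary computation (via the singular value decomposition) gives $\mathrm{Leb}\{\,|\bt|\le R:\,|v_0+W\bt|\le r\,\}\le c(R,s,d)\,r^{\,d}/\sqrt{\det(WW^{\top})}$, a bound that is vacuous when $\operatorname{rank}W<d$. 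I would then exchange the $\bt$-integration with the expectation over $(\omega,\omega')$ and decompose $\mathbb P_x\times\mathbb P_x$ according to the first index $N\ge1$ with $x_N\ne x'_N$ — an event of probability at most $q^{-(N-1)}$ — on which $v_k=\sC^{\,N-1}w_k$, where the $w_k$ are the analogous vectors built from a pair of backward orbits issuing from $x_{N-1}$ and splitting at the very first step; thus $\sqrt{\det(WW^{\top})}=|\det\sC|^{\,N-1}\sqrt{\det(W_0W_0^{\top})}$ with $W_0=[\,w_1\mid\cdots\mid w_s\,]$. Provided the \emph{uniform transversality constant}
$$\kappa:=\inf\big\{\,\sqrt{\det(W_0W_0^{\top})}\;:\;z\in\torus,\ (\omega,\omega')\text{ backward orbits from }z\text{ with }x_1\ne x'_1\,\big\}$$
is strictly positive, summing over $N$ bounds the left-hand side of the last display by $c(R,s,d)\,\kappa^{-1}\,r^{\,d}\sum_{N\ge1}(q\,|\det\sC|)^{-(N-1)}$, a convergent geometric series precisely because $q\,|\det\sC|=|\det DT|>1$ — that is, because $T$ is volume expanding and $|\det\sC|>|\det\sE|^{-1}$, the first defining inequality of $C(d;\sE)$.

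\smallskip
\noindent\textbf{Step 4: the transversality lemma, the main obstacle.} It then remains — and this is where I expect essentially all the work to lie — to construct $\phi_1,\dots,\phi_s\in C^\infty(\torus,\R^d)$ realizing $\kappa>0$: for \emph{every} $z\in\torus$ and \emph{every} pair of backward orbits $(x_n)_{n\ge0},(x'_n)_{n\ge0}$ from $z$ with $x_1\ne x'_1$, the $d\times s$ matrix with columns $w_k=\sum_{n\ge1}\sC^{\,n-1}(\phi_k(x_n)-\phi_k(x'_n))$ should have rank $d$. Its leading ($n=1$) term is $\phi_k(x_1)-\phi_k(x'_1)$, where $x_1\ne x'_1$ are distinct $\sE$-preimages of $z$, so $x_1-x'_1$ ranges over the finite set $\sE^{-1}\Z^u/\Z^u\setminus\{0\}$; one takes $s$ large and chooses the $\phi_k$ so that, at each such configuration, the vectors $\phi_k(x_1)-\phi_k(x'_1)$ ($1\le k\le s$) span $\R^d$. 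The genuine difficulty is to prevent the tail $\sum_{n\ge2}\sC^{\,n-1}(\phi_k(x_n)-\phi_k(x'_n))$ from destroying this non-degeneracy: the threshold at which the tail can be controlled, measured over the first $u-d+1$ backward steps — along which the expanding $\sE$ keeps iterated preimages separated while $\sC$ accumulates only the factor $\|\sC\|^{\,u-d+1}$ — is precisely $\|\sC\|<\|\sE^{-1}\|^{-1}|\det\sE|^{-1/(u-d+1)}$, the second defining inequality of $C(d;\sE)$, and this is also where $u\ge d$ enters: $\kappa>0$ is the quantitative form of transversality of the self-intersections of the $u$-dimensional sheets $T^n(\torus\times\{0\})$ inside the $(u+d)$-dimensional ambient space, and such transversal intersections are geometrically possible only for $u\ge d$. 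A compactness argument on the (compact) space of admissible triples $(z,\omega,\omega')$ finally upgrades pointwise non-degeneracy of $W_0$ to the uniform bound $\kappa>0$, at which point Steps 1--3 close the proof.
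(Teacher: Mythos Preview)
Your Steps 1--3 are a clean and correct Peres--Solomyak reduction: disintegrate along the stable fibres, rewrite the correlation integral, linearize in $\bt$, and decompose by the splitting time $N$. The geometric series in Step~3 converges exactly when $|\det\sE|\,|\det\sC|>1$, which accounts neatly for the first inequality defining $C(d;\sE)$. The gap is Step~4, and it is not a small one. You need, for some \emph{fixed finite} family $\phi_1,\dots,\phi_s$, a uniform lower bound $\kappa>0$ on $\sqrt{\det(W_0W_0^\top)}$ over the \emph{entire} compact space of pairs $(z;\omega,\omega')$ splitting at step~1. But the columns of $W_0$ are tails $\sum_{n\ge1}\sC^{\,n-1}(\phi_k(x_n)-\phi_k(x'_n))$ in which, once $\|\sC\|$ is not small (and $C(d;\sE)$ allows $\|\sC\|$ arbitrarily close to $1$ when $u\gg d$), the contribution from $n\ge2$ is of the same order as the leading term and can cancel it. Your heuristic that the second inequality in $C(d;\sE)$ controls the tail ``over the first $u-d+1$ backward steps'' is not an argument: in your framework the relevant rank-drop locus has codimension $s-d+1$ in $\R^{d\times s}$, not $u-d+1$, and you have given no mechanism linking the two. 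In short, uniform \emph{parameter} transversality of the affine family $\bt\mapsto Y_{x,\bt}(\omega)-Y_{x,\bt}(\omega')$ over all backward-orbit pairs is a much stronger statement than what the theorem needs, and you have not constructed the $\phi_k$'s that achieve it (and it is far from clear that any finite family does).

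The paper avoids this obstruction by working with a different, weaker, \emph{dynamical} transversality. Rather than asking the parameter map to be a uniform submersion, it asks that for some $q$ the number $\tau(q)$ of length-$q$ inverse branches whose graphs $S(\cdot,\ba)$ are \emph{not} geometrically transversal (in the sense $\fm(DS(\cdot,\ba)-DS(\cdot,\bb))$ small) is below $J^q$; this alone forces $|||\mu_{T}|||_r$ to stay bounded as $r\to0$ via an $L^2$ recursion (Theorem~D). Then, instead of a uniform bound, the paper shows that the set of $\bt$ for which this transversality \emph{fails} has Lebesgue measure zero (Propositions~\ref{generic.families} and~\ref{zero.measure}). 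Here the second inequality in $C(d;\sE)$ enters precisely through Lemma~\ref{volume}: the set of $d\times u$ matrices with $\fm(\cdot)<r$ has volume $\lesssim r^{u-d+1}$, and combining this codimension count with the number $N^{q}$ of inverse branches yields the summable estimate $N^{\kappa_0+B+1}\theta^{(u-d+1)\kappa_0}<1$ exactly under that hypothesis. So the exponent $u-d+1$ and the second defining inequality of $C(d;\sE)$ arise from the geometry of $d\times u$ derivative matrices of the graphs, not from the tail of your $W_0$; your Step~4 sketch is reaching for the right numerology but attaching it to the wrong object.
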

	
Since the maps $\sC$ in $C(1)$ are multiplications by a scalar $\lambda$, when $\sE$ is multiple of the identity the set $C(d;\sE)$ is exactly the set $(|\det \sE|^{-1},1)$. So Theorem \ref{teo.b} has the following consequence.
	
\begin{maincorollary}\label{cor.c}
Given an integer $u \geq 1$, a linear expanding map $\sE\in E(u)$ that is multiple of the identity, $\lambda \in (|\det \sE|^{-1},1)$ and $\sC:\R \to \R$ given by $\sC(y)=\lambda y$,  there exists functions $\phi_k \in C^{\infty}(\torus,\R)$, $k=1,2,\dots,s$,
such that for any $f\in C^{2}(\torus,\R)$
the set of $\bt=(t_1,\dots,t_s)$
for which the corresponding SRB measure $\mu_{T_t}$ is absolutely continuous
has full Lebesgue measure.
\end{maincorollary}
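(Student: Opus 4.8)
The plan is to deduce Corollary \ref{cor.c} directly from Theorem \ref{teo.b} by specializing to $d=1$ and checking that the hypotheses of that theorem are satisfied. Since $u\geq 1 = d$, the only thing to verify is that the contraction $\sC\colon\R\to\R$, $\sC(y)=\lambda y$, belongs to $C(1;\sE)$; once this is done, Theorem \ref{teo.b} immediately furnishes the functions $\phi_1,\dots,\phi_s\in C^\infty(\torus,\R)$ with the asserted genericity property, and nothing more is needed.

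First I would record that every element of $C(1)$ is multiplication by a scalar, so for $\sC(y)=\lambda y$ one has $|\det \sC| = |\lambda|$ and $\|\sC\| = |\lambda|$. The first condition in the definition of $C(1;\sE)$, namely $|\det\sC| > |\det\sE|^{-1}$, is then immediate from the hypothesis $\lambda\in(|\det\sE|^{-1},1)$.

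The one point that genuinely uses the extra hypothesis on $\sE$ is the second condition $\|\sC\| < \|\sE^{-1}\|^{-1}\,|\det\sE|^{-\frac{1}{u-d+1}}$; with $d=1$ this exponent is $\frac1u$. Writing $\sE = a\cdot\mathrm{Id}$ with $|a|>1$, every singular value of $E$ equals $|a|$, whence $\|\sE^{-1}\|^{-1} = |a|$ and $|\det\sE|^{1/u} = (|a|^u)^{1/u} = |a|$; therefore the right-hand side equals $|a|/|a| = 1$. Since $\lambda<1$, the inequality $\|\sC\| = \lambda < 1$ holds, so $\sC\in C(1;\sE)$. This also confirms the remark preceding the corollary that, for $\sE$ a multiple of the identity, $C(1;\sE)$ reduces to the interval $(|\det\sE|^{-1},1)$.

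With $\sC\in C(1;\sE)$ in hand, Theorem \ref{teo.b} applied with $d=1$ to the given $\sE$ and $\sC$ produces functions $\phi_k\in C^\infty(\torus,\R)$, $k=1,\dots,s$, such that for every $f\in C^2(\torus,\R)$ the set of parameters $\bt=(t_1,\dots,t_s)$ for which $\mu_{T_\bt}$ is absolutely continuous has full Lebesgue measure, which is exactly the statement of Corollary \ref{cor.c}. There is no serious obstacle here: the argument is pure bookkeeping, the only thing requiring care being the singular-value computation for a scalar matrix, since it is precisely the coincidence $\|\sE^{-1}\|^{-1} = |\det\sE|^{1/u}$ — forced by $\sE$ being a multiple of the identity — that collapses the upper constraint on $\sC$ to the clean bound $\lambda<1$.
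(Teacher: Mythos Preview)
Your proposal is correct and follows essentially the same approach as the paper's own proof: both reduce Corollary~\ref{cor.c} to Theorem~\ref{teo.b} by verifying that for $d=1$ and $\sE=\mu I$ the defining inequalities of $C(1;\sE)$ collapse to $\lambda\in(|\det\sE|^{-1},1)$. Your version is actually a bit more explicit about the singular-value computation, but the logic is identical.
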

	
It is interesting to note that the condition $\lambda \in (|\det \sE|^{-1},1)$ includes values of $\lambda$ arbitrarily close to $1$. This condition on $\lambda$  is the optimal one in order to expect absolute continuity of the invariant measure, because if $\lambda < |\det \sE|^{-1}$ then the map $T$ is volume contracting, which implies that the attractor $\Lambda$ has null volume and that every invariant measure supported in $\Lambda$ must be singular.
	
The proofs of Theorems \ref{teo.a} and \ref{teo.b} are based on an idea of transversality between unstable manifolds, similar to the one in \cite{tsujii.fsa}. The set of maps $\mathcal{U}$ is exactly the set of dynamics $T$ satisfying such condition of transversality (see Definitions \ref{transversality.1} and \ref{transversality.2}). We will see that the transversality condition implies the absolute continuity of $\mu_T$ and is generic under the assumptions of Theorem \ref{teo.b}. The case $u=d=1$ is the one treated in \cite{tsujii.fsa}.

This method of transversality was introduced in the dynamical systems for surface endomorphisms in \cite{tsujii.fsa, tsujii.acta}. A similar idea can be applied to partial hyperbolic dynamical systems in order to prove the existence and finiteness of physical measures \cite{bortolotti, tsujii.acta}, what is conjectured to be valid for a typical dynamical system \cite{palis}.
	
In Section 2, we give the definitions, including the transversality condition, and the statements of this work. In Section 3, we prove that the transversality condition implies the absolute continuity of the SRB measure, giving estimates on the  $L^2$-regularity of this measure. In Section 4, we prove that the transversality condition is generic under the assumption that $\sC\in C(d;\sE)$.
		
\section{Definitions and statements}
	
Let us fix some notations evolving the partition of the basis that codify the action of the expanding map $\sE$ in $\torus$. Codifying this dynamic will be important to define the transversality condition and the generic families of perturbations.
	
Given integers $u$ and $d$, we consider the
dynamic $T=T(\sE,\sC,f):\torus\times \R^{d} \rightarrow \torus\times \R^{d}$ given by
\begin{equation}
T(x,y) = \big ( \sE(x) , \sC(y)  + f(x)  \big)\,,
\end{equation}
where $\sE\in E(u)$ is an expanding map whose lift $E:\R^u \rightarrow \R^u$ is a linear map preserving the lattice $\Z^u$, $\sC\in C(d)$ is a linear contracting map, that is, $\|\sC(v)\|<\|v\|$ for every $v\in\R^d$  and $f \in C^2(\torus,\R^d)$. We suppose in the whole text that $T$ is volume expanding, which means that $E$ and $\sC$ satisfy $|\det \sE  \det \sC|>1$. If $T$ is not volume expanding then the attractor has zero volume and supports no absolute continuous invariant measure.
	
Let $\mathcal{R}=\{\mathcal{R}(1),\cdots,\mathcal{R}(r)\}$ be a fixed Markov partition for $\sE$, that is,
$\mathcal{R}(i)$ are disjoint open sets, the interior of each $\overline{R(i)}$ coincides with $R(i)$,
$\sE_{|_{\mathcal{R}(i)}}$ is one-to-one,
${\bigcup}_i \overline{\mathcal{R}(i)}= \torus$  and
$\sE({\mathcal{R}(i)}) \cap {\mathcal{R}(j)} \neq \emptyset$ implies that ${\mathcal{R}(j)}\subset {\mathcal{R}(i)}$.
It is a well-known fact that Markov partitions always exists for expanding maps (see \cite{markov.partition} for example).
	
Let us suppose that $\operatorname{diam}(\mathcal{R}) < \gamma$, where $\gamma>0$ is a constant such that:   for every $x\in \torus$ and $y\in \sE^{-1}(x)$ there exists a unique affine inverse branch $g_{y,x}:B(x,\gamma) \to \torus$ such that $g_{y,x}(x)=y$ and $\sE(g_{y,x}(z))=z$ for every $z \in B(x,\gamma)$.
	
Consider the set $\overline{I}=\{ 1, \cdots, r\}$ and $\overline{I}^n$ the set of words of length $n$ with letters in $\overline{I}$, $1\leq n \leq \infty$. Denoting by $\ba =(a_{i})_{i=1}^{n}$ a word in $\overline{I}^{n}$, define $I^n$ the subset of words $\ba =(a_{i})_{i=1}^{n}$ with the property that
$$\sE(\mathcal{R}(a_{i+1})) \cap \mathcal{R}(a_{i}) \neq \emptyset \text{ for every } 0\leq i \leq n-1 \,.$$
	
Consider the partition $\mathcal{R}^{n}:= \vee_{i=0}^{n-1} \sE^{-i}(\mathcal{R})$ and, for every $\ba \in \overline{I}^n$,  the sets $\mathcal{R}(\ba) = \cap_{i=0}^{n-1} \sE^{-i}(\mathcal{R}(a_{n-i}))$ in $ \mathcal{R}^n$, which are nonempty if and only if $\ba\in I^n$. The~truncation of $\ba=(a_j)_{j=1}^{n}$ to length $1\le p\le n$  is denoted by $[\ba]_p=(a_j)_{j=1}^{p}$.
	
For any $x\in\torus$, let us fix some $\pi(x) \in \overline{I}$ such that $x\in \overline{\mathcal{R}(\pi(x))}$  (it is unique for almost every $x\in\torus$).
For any $\bc\in I^p$, $1\leq p < \infty$, we consider $I^n(\bc)$ the set of words $\ba\in I^n$ such that $\sE^n(\mathcal{R}(\ba)) \cap \mathcal{R}(\bc) \neq \emptyset$.
Define $I^n(x):=I^n(\pi(x))$,
for $\ba\in I^{n}(x)$, denote by $\ba(x)$ the point $y\in\mathcal{R}(\ba)$ that satisfies $\sE^{n}(y)=x$.
	
For any $\ba\in I^n$ and $1\leq n <\infty$ we consider the set $\D(\ba):=\{x\in\torus | \ba \in I^n(x)\}$, which is a union of rectangles of the Markov partition. The image of  $\mathcal{R}(\ba)\times\{ 0 \}$ by $T^{n}$ is the graph of the function $S(\cdot,\ba):\D(\ba) \to \R^d$ given by
\begin{equation}
S(x,\ba) := \sum_{i=1}^{n} \sC^{i-1} f (\sE^{n-i}(\ba(x))) = \sum_{i=1}^{n} \sC^{i-1} f([\ba]_{i} (x))
\end{equation}
	
Consider the sets $I^\infty(x) = \{ \ba \in I^\infty$ such that $[\ba]_i \in I^i(x)$ for every $i\geq 1 \}$ and $\D(\ba):=\{ x \in \torus | \ba\in I^\infty(x)\}$ for $\ba\in I^\infty$.
If $\ba\in I^{\infty}(x)$, we define $S(x,\ba) = {\lim}_{n\to\infty} S(x,[\ba]_n)$.
The restriction of $S(\cdot,\ba)$ to each atom of the partition $\mathcal{R}$ is uniformly bounded in the $C^2$-topology, so it can be extended to the closure as a $C^{2}$ function redefining it on the border, which we denote by $S_{\bc}(\cdot,\ba)$.
	
In the first part of the work, we define the transversality condition that implies the absolute continuity of the SRB measure of the dynamic $T$, this transversality will be defined evolving the smallest singular value of the difference between two linear maps.
	
Given a linear map $A:\R^{u}\rightarrow \R^{d}$, denote by $$\displaystyle \fm(A):= \sup_{\dim W = d} \inf_{\|v\|=1, v\in W} {\|A(v)\|}$$ the smallest singular value of $A$. Consider the constants $\underline{\mu}= \|\sE^{-1}\|^{-1}$, $\overline{\mu}= \|\sE\|$, $\underline{\lambda}= {\|\sC^{-1}\|^{-1}}$, $\overline{\lambda}= \|\sC\|$, which are the minimum and maximum rates of expansion (or contraction) of $\sE$ (or $\sC$). Consider $N=|\det \sE|$ the degree of the expanding map, $J=|\det \sE  \det \sC |$ the Jacobian of $T$. Consider also $\alpha_{0} = \frac{\|f \|_{C^2}}{1-\|\sC\|}$ and $\theta=\overline{\lambda}\underline{\mu}^{-1}$. 
	
\begin{definition}\label{transversality.1}
Given $T=T(\sE,\sC,f)$ as above, integers $1\leq p, q <\infty$, $\bc\in I^p$ and $\ba, \bb \in I^{q}(\bc)$, we say that $\ba$ and $\bb$  are \textbf{transversal} on $\bc$ if
\begin{equation}\label{transversality}
\fm(   D S_{\bc}(x,\ba) - D S_{\bc}(y,\bb)    )   > 3 \theta^q \alpha_{0}
\end{equation}
for every $x, y \in \overline{\mathcal{R}(\bc)}$.
\end{definition}
	
In order to obtain the absolute continuity of the SRB measure, it will be used a condition of transversality between the graphs $S(x,\ba)$ for a big amount of pairs of $\ba$'s.
		
\begin{definition}\label{transversality.2}
Given $T$ as above, define the integer $\tau(q)$ by
\begin{equation}
\tau(q) =  \min_{p\geq 1} \max_{\bc\in I^{p}} \max_{\ba\in I^{q}(\bc)} \#\{ \bb\in I^{q}(\bc) | \text{ } \ba \text{ is not transversal to } \bb \text{ on } \bc \} \,.
\end{equation}

 We say that it holds the \textbf{transversality condition} if for some integer $q\in \N$ we have $\tau(q)<J^q$.
\end{definition}
	
The transversality condition means that for each $q$ and $\ba \in I^{\infty}$ the number of $\bb$'s that are not transversal to $[\ba]_ q$ may increase with $q$ at most at a rate smaller than $J^q$ (note that $1\leq \tau(q)\leq N^q$). This condition is used to estimate a regularity of the SRB measure, which will give its absolute continuity. The main step in the proof of Theorem A corresponds to the following Theorem.
	
\begin{maintheorem}\label{teo.d}
Given $T=T(\sE,\sC,f)$ satisfying the transversality condition, there exists a neighborhood $U\subset C(d;\sE)\times C^2(\torus,\R^d)$ of $(\sC,f)$ such that for every $(\tilde{\sC} ,\tilde{f} )\in U$ the corresponding SRB measure $\mu_{\tilde{T}}$ of $\tilde{T}=T(\sE,\tilde{\sC},\tilde{f})$ is absolutely continuous with respect to the volume of $\torus\times\R^d$ and its respective density is in $L^2(\torus\times \R^d)$.
\end{maintheorem}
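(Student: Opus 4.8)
The plan is to realise $\mu_T$ as a weak-$*$ limit of explicit measures carried on graphs and to bound a quadratic ``local density'' functional uniformly along the approximation. First I would write $\mu_T=\lim_{n\to\infty}\mu_n$ with $\mu_n:=T^n_*(\mathrm{Leb}_{\torus}\otimes\delta_0)$; since $\sE$ preserves $\mathrm{Leb}_{\torus}$, the measure $\mu_n$ disintegrates over $\torus$ as $\mu_n=\int_{\torus}\big(\sum_{\ba\in I^n(x)}N^{-n}\delta_{S(x,\ba)}\big)\,dx$, i.e.\ $\mu_n$ sits on the graphs of the maps $S(\cdot,\ba)$, $\ba\in I^n$, each carrying weight $N^{-n}$. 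By a standard potential-theoretic criterion (Lebesgue differentiation), $\mu_T\ll\mathrm{vol}$ with density in $L^2(\torus\times\R^d)$ follows once
\begin{equation*}
Y_n(\vep):=\iint\frac{\mathbbm{1}\big[\|z-z'\|<\vep\big]}{\mathrm{vol}(B(z,\vep))}\,d\mu_n(z)\,d\mu_n(z')
\end{equation*}
is bounded by a constant $K$, uniformly in $n$ and in all small $\vep>0$: testing against a continuous kernel squeezed between $\mathbbm{1}_{B(0,\vep)}$ and $\mathbbm{1}_{B(0,2\vep)}$, weak-$*$ convergence transfers the bound (up to a universal constant) to $\mu_T$, and then $\|d\mu_T/d\mathrm{vol}\|_{L^2}^2\lesssim K$.

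The heart of the proof is the uniform estimate of $Y_n(\vep)$. Expanding in the graph decomposition, $Y_n(\vep)=N^{-2n}\sum_{\ba,\bb\in I^n}\Xi_\vep(\ba,\bb)$, where $\Xi_\vep(\ba,\bb)$ is the $\mathrm{vol}(B(\vep))^{-1}$-normalised measure of the set of pairs $(x,x')$ in a common cell of $\mathcal R$ with $\|x-x'\|<\vep$ and $\|S(x,\ba)-S(x',\bb)\|<\vep$. I would split the pairs according to whether the length-$q$ prefixes $[\ba]_q,[\bb]_q$ are transversal on the relevant cell $\bc$, with $q$ the integer furnished by the transversality condition. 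If they are transversal, then since the ``tails'' satisfy $\|DS(\cdot,\ba)-DS_\bc(\cdot,[\ba]_q)\|\le\theta^q\alpha_0$ (and likewise for $\bb$), the factor $3$ in \eqref{transversality} yields $\fm\big(DS(x,\ba)-DS(x',\bb)\big)>\theta^q\alpha_0>0$ for all admissible $x,x'$; because $u\ge d$ this forces $(x,x')\mapsto(x-x',\,S(x,\ba)-S(x',\bb))$ to be a submersion onto $\R^{u+d}$ with constants controlled by $\theta^q\alpha_0$, so the set cut out by the two $\vep$-constraints has measure $\lesssim\vep^{u+d}\asymp\mathrm{vol}(B(\vep))$ by coarea, giving $\Xi_\vep(\ba,\bb)\le C_1$ uniformly in $\vep$ and in the pair. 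Hence the transversal pairs contribute at most $N^{-2n}\cdot\#\{\text{pairs}\}\cdot C_1\le C_1$. (This step is where $u\ge d$ is essential: for $u<d$ a difference $DS(x,\ba)-DS(x',\bb)$ can never have rank $d$, so $\fm\equiv0$.)

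For the non-transversal pairs I would run a recursion based on the self-similar identity $S(x,\ba)=S(x,[\ba]_q)+\sC^q\,S([\ba]_q(x),\sigma^q\ba)$ together with the substitution $x=\sE^q(u)$ on each cell $\mathcal R([\ba]_q)$, which contributes a base-Jacobian $N^q$ and turns the two $\vep$-constraints into a base constraint of size $\underline\mu^{-q}\vep$, a fibre constraint of size $\underline\lambda^{-q}\vep$, and a shift of the fibre centre of the same order. Grouping over initial blocks and invoking the transversality condition to bound by $\tau(q)$ the number of admissible non-transversal successors of a given prefix, and keeping careful track of the degree factors (the fibre dilation $\|\sC^{-q}\|\le\underline\lambda^{-q}$ set against the base contraction contributes precisely the correction $|\det\sC|^{-q}=N^q/J^q$, which turns the naive count $\tau(q)/N^q$ into the effective prefactor $\tau(q)/J^q$), this leads to an inequality of the schematic form
\begin{equation*}
Y_n(\vep)\ \le\ C_1\ +\ \frac{\tau(q)}{J^q}\cdot(\text{bounded constant})\cdot Y_{n-q}(\vep'),
\end{equation*}
with $\vep'$ a controlled rescaling of $\vep$. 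Since $\tau(q)<J^q$, iterating $\lfloor n/q\rfloor$ times makes the accumulated transversal errors $C_1$ into a convergent geometric series, while the genuinely residual term — the contribution of pairs all of whose block-pairs are non-transversal — is at most $(\tau(q)/N^q)^{\lfloor n/q\rfloor}\cdot C\vep^{-d}$ and vanishes as $n\to\infty$ because $\tau(q)<J^q<N^q$. Combining the three estimates gives $\limsup_n Y_n(\vep)\le K$ uniformly in $\vep$, proving the absolute continuity and the $L^2$ bound for $\mu_T$. The main obstacle I anticipate is precisely this recursion: reconciling the two incommensurate $\vep$-scales (base versus fibre) produced by $T^q$, handling the moving fibre centre, and verifying that the net prefactor collapses to $\tau(q)/J^q$ so the series converges — this is exactly the place where volume-expansion ($|\det\sC|>N^{-1}$) enters.

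Finally, robustness. The Markov partition is determined by $\sE$ alone, and for the chosen $p,q$ all the quantities entering Definitions \ref{transversality.1}--\ref{transversality.2} — the finitely many maps $S_\bc(\cdot,\ba)$ in the $C^1$ topology, the constants $\theta=\overline\lambda\,\underline\mu^{-1}$ and $\alpha_0$, and $J=|\det\sE\det\sC|$ — depend continuously on $(\sC,f)$. Transversality of a pair on $\bc$ is therefore an open condition, so the number of non-transversal successors of a prefix cannot increase under a small perturbation and the strict inequality $\tau(q)<J^q$ persists; together with the openness of $C(d;\sE)$ in $C(d)$ this produces a neighbourhood $U\subset C(d;\sE)\times C^2(\torus,\R^d)$ of $(\sC,f)$ on which the transversality condition holds. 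The estimates above then apply verbatim to every $\tilde T=T(\sE,\tilde\sC,\tilde f)$ with $(\tilde\sC,\tilde f)\in U$, giving $\mu_{\tilde T}\ll\mathrm{vol}$ with density in $L^2(\torus\times\R^d)$.
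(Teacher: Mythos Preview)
Your outline follows the same overall architecture as the paper---split pairs according to whether the length-$q$ prefixes are transversal on the relevant cell, bound the transversal contribution by a coarea-type estimate (this is Proposition~\ref{p.tranversality} in the paper, and your submersion argument for $(x,x')\mapsto(x-x',S(x,\ba)-S(x',\bb))$ is the two-variable analogue of its Claim~\ref{eq.local1}), control the non-transversal contribution by a geometric recursion with ratio $\tau(q)/J^q$, and deduce openness from upper semicontinuity of $\tau(q)$ and continuity of $J$. Your robustness paragraph is essentially the paper's verbatim.

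The substantive difference is in how the recursion is packaged. You work with approximating measures $\mu_n=T^n_*(\mathrm{Leb}_{\torus}\otimes\delta_0)$ and a product-space energy $Y_n(\vep)$ built from balls in $\torus\times\R^d$, iterating in the discrete time $n$; the paper instead works directly with the limiting $\mu_T$ (via its symbolic realisation $\mu_T=h_*\hat\mu$), \emph{disintegrates over the base first}, and uses the fibre-only semi-norm $|||\mu|||_r^2=\int_{\torus}r^{-2d}\|\mu_x\|_r^2\,dm(x)$, iterating in the scale $r$. This choice is exactly what dissolves the obstacle you flag: because the base variable is diagonal in $\langle\mu_x,\mu_x\rangle_r$, there is no base scale at all, only the fibre scale, and the Main Inequality (Proposition~\ref{main.ineq}) comes out as $|||\mu|||_r^2\le \frac{\tau(q)}{J^q}\,|||\mu|||_{\underline\lambda^{-q}r}^2+C_1$ with no parasitic multiplicative constant in front of $\tau(q)/J^q$. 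In your formulation, pulling back by $T^q$ dilates the base and fibre by the incommensurate factors $\underline\mu^{-q}$ and $\underline\lambda^{-q}$, so a round $\vep$-ball does not return to a round $\vep'$-ball, and getting the prefactor to collapse to exactly $\tau(q)/J^q$ (rather than $\tau(q)/J^q$ times something $>1$, which would not suffice) requires either an anisotropic functional $Y_n(\vep_{\mathrm{base}},\vep_{\mathrm{fibre}})$ or the disintegration the paper performs. Your route is salvageable, but the paper's choice of functional is precisely the device that makes the recursion close without the bookkeeping you anticipate.
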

	
For every $u \geq d$, we also verify that there exists dynamic $T=T(\sE,\sC,f)$ with $\sC\in C(d)$ that satisfies the transversality condition, even if $\sC \notin C(d,\sE)$. This is due to the following:
	
\begin{proposition}\label{non.empty}
Given $u\geq d$, integers $\mu_1,\cdots,\mu_u \geq 1$, $u_1, \cdots, u_d$ such that $u_1+\cdots+u_d=u$ and real numbers $\lambda_i \in \big(\mu_i^{-u_i},1\big)$. Consider $\torus=\mathbb{T}^{u_1}\times\cdots\times\mathbb{T}^{u_d}$,  the expanding map $\sE:\T^u \to\T^u$, $\sE(x_1,\cdots,x_d)=(\mu_1 x_1,\cdots,\mu_d x_d)$, and the contracting map $\sC:\R^d\to\R^d$, $\sC(y_1,\cdots,y_d)=(\lambda_1y_1,\cdots,\lambda_dy_d)$.
Then there exists a function $f  \in C^2(\torus, \R^d)$ for which the corresponding map $T=T(\sE,\sC,f)$ satisfies the transversality condition.
\end{proposition}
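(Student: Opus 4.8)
The plan is to exploit the product structure. Writing $x=(x_1,\dots,x_d)$ with $x_n\in\T^{u_n}$ and $y=(y_1,\dots,y_d)\in\R^d$, we have $\sE=\sE_1\times\cdots\times\sE_d$ with $\sE_n=\mu_n\,\mathrm{Id}$ on $\T^{u_n}$, $\sC=\mathrm{diag}(\lambda_1,\dots,\lambda_d)$, and we may take a product Markov partition $\mathcal R=\mathcal R_1\times\cdots\times\mathcal R_d$ (refining each factor so that $\diam(\mathcal R)<\gamma$). The statement is believable because of an identity between the exponents: for $d=1$ one has $u-d+1=u_n$, so $C(1;\sE_n)=\{\lambda:\mu_n^{-u_n}<\lambda<\mu_n/(\mu_n^{u_n})^{1/u_n}\}=(\mu_n^{-u_n},1)$, and hence every $\lambda_n$ lies in $C(1;\sE_n)$: each coordinate direction of the system already sits in the optimal range of Corollary \ref{cor.c}. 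The upper bound $\lambda_n<1$ will give summability of the perturbation estimates along the $n$-th fibre direction; the lower bound $\lambda_n>\mu_n^{-u_n}$ enters only through the standing hypothesis that $T$ is volume expanding, i.e. $J=\prod_n\mu_n^{u_n}\lambda_n>1$.

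We construct $f$ as a small perturbation $f=f_0+\sum_{m,\alpha}t_{m,\alpha}\phi_{m,\alpha}$, with $m\in\{1,\dots,d\}$ the fibre component, $\alpha$ ranging over a finite basis of bump functions subordinate to the refined partition $\mathcal R^p$ ($p$ chosen at the end), and — this is essential — each $\phi_{m,\alpha}$ chosen to depend genuinely on every base factor $x_n$. Because $\sC$ and $\sE$ are diagonal, the $(m,n)$-block (a $1\times u_n$ row) of the differential $DS_{\bc}(x,\ba;\bt)$ equals $\sum_i\lambda_m^{\,i-1}\mu_n^{-i}$ times the $x_n$-partial of $f_m$ at $[\ba]_i(x)$, so moving $t_{m,\cdot}$ moves the $m$-th fibre row and nothing else. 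Allowing the cross-blocks $m\neq n$ to be nonzero is not optional: if every $\phi_{m,\alpha}$ (and $f_0$) depended on a single base factor — in particular if $f$ were a pure product — then for $\ba^{j}=\bb^{j}$ and $x_j=y_j$ the $j$-th column block of $DS_{\bc}(x,\ba)-DS_{\bc}(y,\bb)$ would vanish identically, forcing $\fm(DS_{\bc}(x,\ba)-DS_{\bc}(y,\bb))=0$ as soon as $\sum_{k\neq j}u_k<d$ (always when $u=d$), which already breaks the transversality condition for all such $f$.

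Now fix a scale $q$ and a sufficiently fine net of points in each $\overline{\mathcal R(\bc)}$, and bound, for every $\bc\in I^{p}$, every pair $\ba\neq\bb\in I^{q}(\bc)$ and every pair of net points, the Lebesgue measure of $\{\bt:\fm(DS_{\bc}(x,\ba;\bt)-DS_{\bc}(y,\bb;\bt))\le 3\theta^{q}\alpha_0\}$; summing over pairs and net points and intersecting the resulting sets over the finitely many $\bc\in I^{p}$, we need the total measure to be less than the volume of the parameter cube. This is the point where the product structure replaces the hypothesis $\sC\in C(d;\sE)$: in the argument behind Theorem \ref{teo.b} (and Theorem \ref{teo.d}) the convergence of this double sum is governed by the single inequality $\theta^{\,u-d+1}|\det\sE|<1$, which together with $J>1$ is exactly $\sC\in C(d;\sE)$. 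Here, since $\sC$ and $\sE$ are diagonal, two admissible words that first disagree at a scale $i_0$ do so inside some specific base factor $n$, the perturbation controlling that disagreement lives in factor $n$, and the estimate organizes factor by factor; the per-factor summability condition becomes $\theta_n^{\,u_n}N_n<1$ with $\theta_n=\lambda_n/\mu_n$ and $N_n=\mu_n^{u_n}$, i.e. simply $\lambda_n^{\,u_n}<1$, which holds for every $n$. So the double sum converges even when $\sC\notin C(d;\sE)$.

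Finally let $q$ grow. For a generic $\bt$ the estimate above makes every pair $\ba,\bb$ whose first disagreement occurs at an index $i_0\le q-\ell_0$ transversal on $\bc$, where $\ell_0$ is a constant depending only on $f_0$ and on the uniform size of the perturbations — roughly, the depth at which two branches must already be separated before the tail of the series defining $S(\cdot,\ba)-S(\cdot,\bb)$ is forced within $3\theta^{q}\alpha_0$ of zero. Hence $\tau(q)$ is at most the number of $\bb$ that agree with a fixed $\ba$ on its first $q-\ell_0-1$ symbols, which is $\lesssim N^{\ell_0+1}$ independently of $q$; since $J>1$ gives $J^{q}\to\infty$, we obtain $\tau(q)<J^{q}$ for all large $q$, which is the transversality condition, and any $\bt$ in the corresponding full-measure set yields the desired $f$. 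I expect the block-wise smallest-singular-value estimate of the third paragraph to be the main obstacle — making rigorous that the diagonal form of $\sC$ and $\sE$ really does turn the global exponent $u-d+1$ into the family $\{u_n\}$, each governed by the harmless condition $\lambda_n<1$; the remaining ingredients (persistence of volume expansion and of the word combinatorics under small perturbations, and $C^{2}$-control of $S(\cdot,\ba)$ on atoms, which lets the pointwise net bound dominate all $x,y$) are routine.
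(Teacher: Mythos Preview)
Your approach is genuinely different from the paper's. The paper takes the pure product $f(x_1,\dots,x_d)=(f_1(x_1),\dots,f_d(x_d))$, applies Proposition~\ref{zero.measure} to each one--dimensional factor $T_m=T(\sE_m,\lambda_m,f_m)$ (this is where $\lambda_m\in(\mu_m^{-u_m},1)=C(1;\sE_m)$ is used) to get $\tau_m(q)<J_m^q$, and then asserts $\tau(q)\le\prod_m J_m^q=J^q$. You instead argue that pure products cannot satisfy the transversality condition and propose to build $f$ with genuine cross--dependencies, redoing the measure estimate of Section~4 factor by factor.

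Your critique of product $f$ is correct and actually stronger than you state: for a pure product the matrix $DS_{\bc}(x,\ba)-DS_{\bc}(y,\bb)$ is block--\emph{diagonal}, so whenever $\ba^{j}=\bb^{j}$ and $x_j=y_j$ its $j$-th \emph{row} vanishes and $\fm=0$ with no hypothesis like $\sum_{k\neq j}u_k<d$. Thus every $\bb$ that agrees with $\ba$ in at least one factor is non--transversal to $\ba$ on every $\bc$, so $\tau(q)\ge N^q/\min_m N_m^q$; already for $d=2$, $u_1=u_2=1$, $\mu_1=\mu_2=2$, $\lambda_1=\lambda_2=0.6$ this gives $\tau(q)\ge 2^q>1.44^q=J^q$. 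The event ``$\ba$ not transversal to $\bb$'' is a \emph{union} over $m$ of the factor events, not an intersection, so the multiplicative bound in the paper's argument does not hold.

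That said, your own sketch has real gaps at the two points you flag. First, the assertion that the parameter--measure sum ``organizes factor by factor'' with per--factor condition $\theta_n^{\,u_n}N_n<1$ is not derived: once $f$ has cross--dependencies the difference matrix is no longer block--diagonal, and it is not clear why the diagonal form of $\sC,\sE$ alone (which governs the weights $\lambda_m^{\,i-1}\mu_n^{-i}$ but not the entries $\partial_{x_n}f_m$) upgrades the exponent $u-d+1$ of Lemma~\ref{volume} to the individual $u_n$. Second, your last paragraph overshoots: a finite family of perturbations together with a measure estimate at each scale $q$ does not produce a single $\bt$ for which \emph{every} early--disagreeing pair is transversal (hence $\tau(q)\lesssim N^{\ell_0+1}$ uniformly); as in the paper's proof of Proposition~\ref{zero.measure} it only yields that the bad set at scale $q$ has measure $\to 0$, so a.e.\ $\bt$ satisfies the transversality condition at \emph{some} $q$ --- which is all the proposition requires, but not what you claim.
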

	
		
In the last part of the work, we deal with families of perturbations that give a dynamic satisfying the transversality condition.
	
Given $f_0 \in C^2(\torus, \R^d)$ and functions $\phi_1,\cdots, \phi_s \in C^\infty(\torus, \R^d)$, for any $s$-uple of parameters $\bt=(t_1,\cdots,t_s)$, we will consider the corresponding function
$ f_{\bt}(x) = f_0(x) + \sum_{k=1}^{m} t_{k} \phi_{k} (x) $ 
and the corresponding dynamic $T_\bt= T(\sE,\sC,f_\bt)$. For words $\ba\in I^{n}(x)$, $1\leq n\leq \infty$, we have also the corresponding $S(x,\ba;\bt) $. 
	
For a point $x \in\torus$ and a sequence $\sigma = (\ba_{0}, \ba_{1}, \cdots, \ba_{k})$ of words in $I^{\infty}(x)$, we consider the affine map $\fpsi_{x,\sigma}:\R^{s}\rightarrow (\R^{ud})^k$ defined by
$$\fpsi_{x,\sigma}(\bt) = \Big(  D S(x,\ba_{i};\bt) - D S(x,\ba_{0};\bt)  \Big)_{i=1,\cdots,k} $$
Each entry of $\fpsi_{x,\sigma}$ corresponds to the difference between the images of $T_{\bt}^q$ restricted to $\mathcal{R}(\ba_i)\times \{0\}$ and to $\mathcal{R}(\ba_0)\times \{0\}$.

Let us denote by $\Jac \fpsi_{x,\sigma}(\bt)$ the supremum of the Jacobian of the restrictions of $\fpsi_{x,\sigma}$ to $k$-dimensional subspaces: $\Jac(\fpsi_{x,\sigma})(\bt) = \sup_{dim L=k} {|\det D\fpsi_{x,\sigma} (\bt) |_{L} | } $.

The following definition is important in the proof of Theorem \ref{teo.b}, because corresponds to the kind of family $T_\bt$ that we shall construct in order to prove that the transversality condition is generic.
	
\begin{definition}\label{generic.family}
For every integer $n\geq 1$, we say that the family $T_{\bt}$ is \textbf{$n$-generic} on $A\subset\torus$ if the following holds: for any $x\in A$, for every $D\geq n^3$ and any sequence $(\ba_{0},\ba_{1},\cdots,\ba_{D})$ of $D$ words in $I^{\infty}(x)$ such that $[\ba_{i}]_{n}$ are distinct, taking $\kappa=\lfloor \frac{D}{2n}\rfloor$, there exists a subsequence $\sigma=(\bb_{0}, \bb_{1}, \cdots, \bb_{\kappa})$ of  $\{\ba_{i} \}$ of length  $\kappa + 1 $  such that $\bb_{0}=\ba_{0}$ and $\Jac(\fpsi_{x,\sigma})(\bt)>\frac{1}{2}$ for every $\bt\in \R^{m}$.
\end{definition}

The proof of Theorem \ref{teo.b} is divided into 2 parts: one corresponds to the construction of  functions $\phi_1,\cdots,\phi_s$ for which the family $T_\bt^n$ is generic for some large value of $n$ (Proposition \ref{generic.families}) and the other to check that the transversality condition is valid for almost every parameter $\bt$ for a certain generic family (Proposition \ref{zero.measure}). Then, Theorem \ref{teo.b} follows as consequence of Propositions \ref{generic.families} and \ref{zero.measure}.
	
\begin{proposition}\label{generic.families}
Given $u\geq d$, $\sE\in E(u)$, $\sC\in C(d;\sE)$, there exists an integer $n_0$ such that for every $n \geq n_0$ there exists functions $\phi_k \in C^{\infty}(\torus,\R^d)$, $1\leq k \leq s$, such that for every $f_0 \in C^2(\torus, \R^d)$  the corresponding family $T_{\bt}^{n}$ is $n$-generic on $\torus$.
\end{proposition}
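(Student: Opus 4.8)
The plan is to build the perturbations $\phi_k$ so that the derivatives $DS(x,\ba;\bt)$ for words $\ba$ distinguished at level $n$ can be moved independently, up to the exponentially small interaction coming from letters beyond position $n$. First I would unwind the formula for $DS(x,\ba;\bt)$. Since $f_\bt = f_0 + \sum_k t_k\phi_k$, the map $S(\cdot,\ba;\bt)$ depends affinely on $\bt$, and differentiating the series $S(x,\ba;\bt)=\sum_{i\ge1}\sC^{i-1}f_\bt([\ba]_i(x))$ in $x$ gives
$$
DS(x,\ba;\bt) = DS(x,\ba;0) + \sum_{k=1}^{s} t_k \sum_{i=1}^{\infty} \sC^{i-1} D\phi_k([\ba]_i(x))\, Dg_{[\ba]_i},
$$
where $Dg_{[\ba]_i}$ is the derivative of the inverse branch, of size at most $\underline{\mu}^{-i}$. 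The $i\le n$ part of this sum is the ``essential'' part: for two words with $[\ba]_n\ne[\bb]_n$ the truncated orbits $\{[\ba]_i(x)\}_{i\le n}$ and $\{[\bb]_i(x)\}_{i\le n}$ eventually separate, so one can hope to choose the $\phi_k$'s to act differently on them. The tail $i>n$ contributes at most a term of size $\theta^n\|\sC\|\|\phi_k\|_{C^1}/(1-\theta)$ — here is where $\sC\in C(d;\sE)$ enters, guaranteeing $\theta=\overline\lambda\underline\mu^{-1}<1$ (indeed the stronger bound $\theta<N^{-1/(u-d+1)}$), so this tail is exponentially small in $n$ and can be absorbed once $n$ is large.

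The core construction is combinatorial. For each length-$n$ admissible word $\ba\in I^n$ pick the point $\ba(c)$ in $\mathcal{R}(\ba)$ projecting to some reference point, and note these points are pairwise separated by a definite amount $\delta_n>0$ (since they lie in distinct atoms of the refined partition $\mathcal{R}^n$, up to choosing representatives carefully). Using a partition of unity subordinate to balls of radius $\sim\delta_n/3$ around these finitely many points, I would define bump functions $\phi_k$ (one package of $d$ coordinate bumps per relevant ``slot'') so that $\sum_k t_k D\phi_k$ restricted near $\ba(c)$ realizes an arbitrary prescribed linear map $\R^u\to\R^d$, independently over the different $\ba$'s. Composing with $Dg_{\ba}$ (an invertible-on-its-image affine contraction, of controlled norm) shows that the ``level-$n$ block'' of the map $\bt\mapsto (DS(x,\ba_i;\bt))_i$ is, after the identification, a submersion onto a product of copies of $\mathrm{Hom}(\R^u,\R^d)\cong\R^{ud}$, with derivative bounded below by a constant $c_0(n)>0$. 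The number $s$ of perturbations is then $ud$ times the number of length-$n$ words, i.e. $s=s(n)$, which is finite and depends only on $\sE$ and $n$.

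With the $\phi_k$ fixed, I turn to the $n$-genericity statement itself. Given $x$ and $D\ge n^3$ distinct-at-level-$n$ words $\ba_0,\dots,\ba_D$, the truncations $[\ba_i]_n$ are $D+1$ distinct length-$n$ words; since there are at most $N^n$ of these, actually $D\le N^n$, but more usefully I only need to extract a subsequence $\sigma=(\bb_0,\dots,\bb_\kappa)$, $\kappa=\lfloor D/2n\rfloor$, with $\bb_0=\ba_0$ on which the relevant derivative blocks are ``independent enough'' to force $\Jac(\fpsi_{x,\sigma})(\bt)>\tfrac12$. The selection: greedily pick words whose level-$n$ truncations are not only mutually distinct but also pairwise ``well-separated'' in the sense that their orbit-strings first differ early enough that the bump functions see them in disjoint balls — a pigeonhole/counting argument (losing a factor $\sim 2n$) yields $\kappa+1$ such words out of $D+1$, which is exactly the budget $\lfloor D/2n\rfloor$. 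On that subsequence, $\fpsi_{x,\sigma}$ is affine in $\bt$ with linear part equal to the level-$n$ submersion block (of Jacobian $\ge c_0(n)^k$ on a suitable $k$-plane) plus the tail perturbation (operator norm $\le C\theta^n$), so for $n\ge n_0$ large the tail is a small perturbation and $\Jac(\fpsi_{x,\sigma})(\bt)\ge (c_0(n)-C\theta^n)^k>\tfrac12$; note $\Jac$ here is a supremum over $k$-planes, so exhibiting one good plane suffices, and the bound is uniform in $\bt$ precisely because $\fpsi_{x,\sigma}$ is affine and its linear part does not depend on $\bt$. Finally, the whole estimate is uniform in $x\in\torus$: the separation constant $\delta_n$, the branch-derivative bounds, and the tail bound depend only on $\sE,\sC$ and $n$, not on $x$ or on $f_0$ (which only shifts $\fpsi$ by a constant), so the family $T_\bt^n$ is $n$-generic on all of $\torus$.

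I expect the main obstacle to be the separation-and-selection step: ensuring that the finitely many bump functions $\phi_k$, whose supports are fixed once and for all, genuinely decouple the derivative contributions of an \emph{arbitrary} family of $D$ level-$n$-distinct words at a \emph{given} but arbitrary base point $x$. The subtlety is that $[\ba_i]_j(x)$ and $[\ba_{i'}]_j(x)$ can stay close for many steps $j<n$ before separating, so the ``slot'' in which $\phi_k$ distinguishes them must be indexed not just by the words but by the first time of separation; making the counting work so that still $\lfloor D/2n\rfloor$ usable words survive, while keeping $s$ finite and independent of $x$, is the delicate bookkeeping at the heart of the argument.
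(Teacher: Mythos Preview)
Your outline follows the paper's strategy closely: place bump functions at the level-$n$ preimages, use the affine dependence of $DS(x,\ba;\bt)$ on $\bt$, extract a subsequence on which the bumps decouple, and bound $\Jac(\fpsi_{x,\sigma})$ from below as ``identity plus small error''. One organizational difference: the paper first proves a \emph{local} version (Proposition~\ref{local.version}), building bumps centered at the preimages $\ba(x)$ of a fixed reference point $x$ and obtaining $n$-genericity only on a neighborhood $U_x$; it then covers $\torus$ by finitely many such $U_{x_k}$ and takes the union of all the $\phi$'s. This cleanly avoids the difficulty you yourself flag at the end, namely that bumps with supports fixed once and for all must work at every base point.

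There is, however, a real gap in your tail-versus-main balance. You claim the tail $i>n$ is $O(\theta^n\|\phi_k\|_{C^1})$ and then write $(c_0(n)-C\theta^n)^k>\tfrac12$ for large $n$. But with $\|\phi_k\|_{C^1}$ bounded, the level-$n$ contribution $C^{n-1}D\phi_k\,E^{-n}$ is itself $O(\theta^n\|\phi_k\|_{C^1})$, so $c_0(n)\sim\theta^n$ and $c_0(n)^k\to0$; the ratio tail/main is only the fixed constant $\theta$, not something going to zero. If instead you normalize so that the level-$n$ term equals a fixed matrix (the paper takes $D\phi^{\ba}_{i',j'}=C^{-n+1}E_{i',j'}E^n$ near $\ba(x)$, giving $c_0=1$), then $\|\phi_k\|_{C^1}$ blows up and the tail from $i>n$ is $\sum_{i>n}\theta^{i-n}=\theta/(1-\theta)$, still $O(1)$, which cannot beat the exponent $k\le N^n$ in $(1-\text{tail})^{kud}$. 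The paper's fix is an additional integer parameter $\nu$, chosen large depending on $n$: the bump supports are shrunk so that the backward orbit $[\bb_l]_i(y)$ misses them for all $0\le i\le n+\nu$ (not just $i\le n$), and the combinatorial selection (their Claim~\ref{subsequence}, via the forward-orbit sets $\mathcal{E}(\bb,x)$ of size $\le n+1$) is calibrated to this longer window. The tail then starts only at $i>n+\nu$ and is $O(\theta^\nu)$ against a unit main term, and $\nu$ is chosen so that $(1-O(N^n\theta^\nu))^{N^nud}>\tfrac12$. This decoupling of the smallness parameter $\nu$ from the word-length $n$ is the idea your argument is missing.
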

	
For fixed $\sE,\sC$ define the set
$$\mathcal{T} = \left\{ f \in C^2(\torus,\R^d) , \underset{q\rightarrow \infty}{\limsup} \frac{\log \tau(q)}{q} \geq \log J \right\}\,.$$
If $f$ is not in $\mathcal{T}$, then the dynamic $T=T(\sE,\sC,f)$ satisfies the transversality condition and there exists some integer $q_0$ such that  $\tau(q) <J^q$ for every $q\geq q_0$.
	
The construction of generic families will be used to verify that for every $f_0\in C^2(\torus,\R^d)$ there exists a finite dimensional subspace $H \subset C^2(\torus,\R^d)$ such that the transversality condition is valid for the dynamic corresponding to almost every $g\in f_0+H$.
	
\begin{proposition}\label{zero.measure}
Given $u\geq d$, $\sE\in E(u)$ and $\sC\in C(d;\sE)$, there exists functions $\phi_k \in C^{\infty}(\torus,\R^d)$, $1\leq k \leq s$, such that for every $f_0 \in C^2(\torus, \R^d)$  the set of parameters $\bt \in \R^s$ for which $f_\bt$ is in $\mathcal{T}$
has zero Lebesgue measure.
\end{proposition}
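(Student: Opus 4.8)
The plan is to feed the generic families of Proposition~\ref{generic.families} into a Borel--Cantelli argument. First I would apply Proposition~\ref{generic.families} to fix, once and for all, an integer $n$ — chosen with $n\ge n_0$ and large relative to $\log J$ — and functions $\phi_1,\dots,\phi_s\in C^\infty(\torus,\R^d)$ such that for \emph{every} $f_0$ the family $T^{n}_{\bt}$ is $n$-generic on all of $\torus$; these are the claimed functions, and they serve all $f_0$ at once because Proposition~\ref{generic.families} is uniform in $f_0$. Now fix $f_0$, write $\tau(q;\bt)$ for the integer of Definition~\ref{transversality.2} attached to $T_{\bt}$, and fix a small $\vep>0$. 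Because Lebesgue measure on $\R^s$ is $\sigma$-finite, it is enough to prove that for each $R>0$ the sets $B_q:=\{\bt\in B_R:\ \tau(q;\bt)\ge J^{q}e^{-\vep q}\}$ satisfy $\sum_{q}|B_q|<\infty$: by Borel--Cantelli a.e.\ $\bt\in B_R$ then lies in finitely many $B_q$, whence $\limsup_q q^{-1}\log\tau(q;\bt)\le\log J-\vep<\log J$, i.e.\ $f_{\bt}\notin\mathcal T$; letting $R\to\infty$ finishes.

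To bound $|B_q|$, fix $q$ and choose an auxiliary depth $p=p(q)$, linear in $q$, large enough that the $C^1$-variation of $S_{\bc}(\cdot,\ba;\bt)$ over $\overline{\mathcal R(\bc)}$ is $o(\theta^{q}\alpha_0)$ uniformly in $\bc\in I^{p}$, $\ba\in I^{q}(\bc)$ and $\bt\in B_R$ — possible since $\diam\mathcal R(\bc)\le\gamma\underline\mu^{-p}$ and the second derivatives of $S_{\bc}$ are uniformly bounded on $B_R$. For this $p$, $\tau(q;\bt)\le\max_{\bc}\max_{\ba}\#\mathrm{NT}_q(\ba,\bc;\bt)$ with $\mathrm{NT}_q(\ba,\bc;\bt):=\{\bb\in I^{q}(\bc):\ \ba\ \text{not transversal to}\ \bb\ \text{on}\ \bc\}$, so it suffices to bound this cardinality for every $\bc$ and $\ba$. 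After the choice of $p$, and fixing a base point $x_{\bc}\in\mathcal R(\bc)$, membership $\bb\in\mathrm{NT}_q(\ba,\bc;\bt)$ forces the matrix $DS(x_{\bc},\bb;\bt)-DS(x_{\bc},\ba;\bt)$ into $\mathcal V_{\delta}:=\{A\in\mathcal L(\R^{u},\R^{d}):\fm(A)\le\delta\}$ with $\delta\asymp\theta^{q}$. Two geometric facts drive the estimate: (i) $\mathcal V_{\delta}$ is the $\delta$-neighbourhood of the determinantal locus $\{\operatorname{rank}\le d-1\}$, of codimension $u-d+1$ in $\mathcal L(\R^{u},\R^{d})$, hence has volume $O(\delta^{\,u-d+1})$ in any fixed ball; and (ii) along the subsequences $\sigma$ with $\bb_0=\ba$ produced by $n$-genericity, the \emph{affine} map $\fpsi_{x_{\bc},\sigma}$ has $\Jac\fpsi_{x_{\bc},\sigma}(\bt)>\tfrac12$ for all $\bt$. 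Combining (i) and (ii) through a Fubini/transversality argument, for any family of $\kappa$ words supplied by $n$-genericity the parameter set where all of them are simultaneously non-transversal to $\ba$ has measure $\lesssim_{R}(\theta^{\,u-d+1})^{q\kappa}$; and here $\sC\in C(d;\sE)$ enters decisively: $\|\sC\|<\underline\mu N^{-1/(u-d+1)}$ forces $\theta^{\,u-d+1}=(\overline\lambda/\underline\mu)^{u-d+1}<N^{-1}$, so this measure is $\lesssim_{R}(Ne^{\eta})^{-q\kappa}$ for some fixed $\eta>0$.

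The remaining and principal difficulty is the bookkeeping converting the per-batch bound into $|B_q|\le(\text{summable})$. The structural point is that ``$\ba$ not transversal to $\bb$ on $\bc$'' is stable under extending the words: the tail of the series defining $S$ past level $m$ is $O(\theta^{m})$, dominated (given the constant $3$ in Definition~\ref{transversality.1}) by the threshold $3\theta^{m}\alpha_0$, so $\mathrm{NT}_q(\ba,\bc;\bt)$ is the set of depth-$q$ leaves of a subtree $\mathcal N=\mathcal N(\bt)$ of the admissible-word tree that is closed under taking prefixes. One then processes $\mathcal N$ in blocks of length $n$: by the cocycle relation for $S$ (appending a length-$n$ block to a depth-$m$ word $\bb$ alters $S(x,\cdot)$ by $\sC^{m}$ times the $S$-function of that block based at $[\bb]_m(x)$), the descendants at depth $m+n$ of a fixed depth-$m$ node, being indexed by pairwise distinct length-$n$ suffixes, reduce — up to conjugation by $\sC^{m}$ and a shift of base point along $\sE^{m}$ — to a configuration to which $n$-genericity of $T^{n}_{\bt}$ \emph{on all of} $\torus$ applies; so whenever such a node has at least $n^{3}$ surviving descendants, $n$-genericity extracts $\kappa\asymp n^{-1}\#\{\text{descendants}\}$ of them along which $\Jac>\tfrac12$, and the previous paragraph bounds by $(Ne^{\eta})^{-cn\kappa}$ the measure of parameters for which they all survive. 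Feeding these into a recursive estimate controlling $\#\mathcal N_m$ in measure as $m$ runs through multiples of $n$, one obtains that outside a set of $\bt\in B_R$ summable in $q$ every node of $\mathcal N$ has fewer than $n^{3}$ surviving descendants per block, whence $\#\mathcal N_q\le C(n^{3})^{q/n}<J^{q}e^{-\vep q}$ once $n$ was fixed large relative to $\log J$ (using $J>1$); summing over the $r^{p}=e^{O(q)}$ choices of $\bc$ and the $\le CN^{q}$ choices of $\ba$ preserves summability because the geometric gains are super-exponential. The delicate issue throughout is to keep the number of tree nodes one must sum over — a priori as large as $N^{m}$ — from swamping those gains, which forces the sum over nodes to be handled conditionally (exploiting that $\mathcal N$ is itself typically small) and which is exactly where one uses the \emph{strict} inequality $\theta^{\,u-d+1}N<1$ from $\sC\in C(d;\sE)$ rather than a mere $\le$. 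With $\sum_q|B_q|<\infty$, the reduction of the first paragraph completes the proof.
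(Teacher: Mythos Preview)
Your framework is right in spirit: the Borel--Cantelli reduction, the codimension estimate $\operatorname{vol}(\mathcal V_\delta)=O(\delta^{u-d+1})$ for the locus $\{\fm\le\delta\}$, and the identification of the strict inequality $N\theta^{u-d+1}<1$ coming from $\sC\in C(d;\sE)$ are exactly the ingredients the paper uses. The gap is in the recursive tree step.

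When you sit at a node $\bb$ of depth $m\ge n$ and look at its surviving descendants $\bb\bv_1,\dots,\bb\bv_k$ at depth $m+n$, you shift the base point to $y=\bb(x_{\bc})$ so that the suffixes $\bv_i$ have pairwise distinct $n$-truncations and $n$-genericity applies. But the Jacobian you then obtain is for $\fpsi_{y,\sigma'}(\bt)=\big(DS(y,\bv_i;\bt)-DS(y,\bv_0;\bt)\big)_i$, whereas the event ``they all survive'' is governed by the map $\Psi(\bt)=\big(DS(x_{\bc},\bb\bv_i;\bt)-DS(x_{\bc},[\ba]_{m+n};\bt)\big)_i$. By the cocycle relation these differ in two ways: first a post-composition by $L:(A_i)_i\mapsto(C^mA_iE^{-m})_i$, which shrinks the Jacobian by the factor $\big(|\det C|^{u}|\det E|^{-d}\big)^{m\kappa}$; second, an additive term $(\tilde c(\bt),\dots,\tilde c(\bt))$ with $\tilde c(\bt)$ depending on $\bt$ (coming from $DS(x_{\bc},\bb;\bt)-DS(x_{\bc},[\ba]_{m+n};\bt)$), i.e.\ a rank-$ud$ perturbation of the derivative which can a priori kill the Jacobian lower bound. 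Neither effect is accounted for, and there is no evident mechanism forcing the lost Jacobian to be compensated by the smaller target at scale $\theta^{m+n}$. Note also that you \emph{must} shift: viewed from $x_{\bc}$, all the $\bb\bv_i$ share $[\bb]_n$, so $n$-genericity at $x_{\bc}$ does not apply to them. Finally, the ``conditional'' bookkeeping you allude to (summing over nodes while exploiting that $\mathcal N$ is small) is itself a nontrivial argument that you have not supplied.

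The paper avoids all of this with a single-step extraction (its Lemma~\ref{tangencies}) replacing your recursion. Starting from $\tau(\tilde q;\bt)\ge J^{\tilde q}$, one groups the $\ge J^{\tilde q}$ non-transversal words by their $jn_0$-prefixes for $j=0,1,\dots$ and finds, by pigeonhole, a level $j_*$ at which the maximal group splits into at least $D_0+1$ distinct $n_0$-blocks; stripping the common prefix and using the cocycle identity yields, at some $q\le\tilde q$, words $\ba_0,\dots,\ba_{D_0}\in I^{q}(\bc)$ with \emph{pairwise distinct} $[\ba_i]_{n_0}$ and $\fm\big(DS(x_{\bc},\ba_i)-DS(x_{\bc},\ba_0)\big)\lesssim\theta^q$. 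Now $n_0$-genericity applies \emph{once}, at the original base point $x_{\bc}$ and with $\bb_0=\ba_0$, so the Jacobian bound is directly on $\fpsi_{x_{\bc},\hat\sigma}$, exactly the map whose preimage of $\mathcal X(7\theta^q\alpha_0)^{\kappa_0}$ you must control. Summing over the $O(N^{(\kappa_0+1)q+p(q)})$ choices of $(\sigma,\bc)$ then gives the exponential bound $\big(N^{\kappa_0+B+1}\theta^{(u-d+1)\kappa_0}\big)^q$, with $\kappa_0$ \emph{fixed} and chosen so that this base is $<1$. No shift of base point, no conjugation losses, no tree recursion.
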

	
\section{Absolute continuity of the SRB measure}
	
\subsection{The semi-norm}
	
In order to obtain the absolute continuity of the measure $\mu_T$, we will use a semi-norm that measures the regularity of the measure, in a similar way to the $L^2$-norm of the density function. In this Section, we will state some tools that will be used to prove Theorem \ref{teo.d}.
	
Let us denote by $m_d$ the usual Lebesgue measure on $\R^d$, $m$ the normalized Lebesgue measure on $\torus$ and $\pi_1:\torus\times\R^d\to\torus$ the projection into the first coordinate.
	
\begin{definition}
Given finite measures $\mu_1$ and $ \mu_2$ on $\R^d$ and $r>0$, we define the bilinear form
\begin{equation}
\langle\mu_1,\mu_2\rangle_r=\underset{\R^d}{\int}\mu_1(B(x,r)) \mu_2(B(x,r))\,dm_d(x)
\end{equation}
where $B(z,r)\subset \R^d$ is the ball of radius $r$ centered in $z$. The \textbf{norm} $\|\mu\|_r$ of the finite measure $\mu$ is defined by
$$\|\mu\|_r=\sqrt{\langle\mu,\mu\rangle_r}.$$
\end{definition}
	
	The value of $\|\mu\|_r$ when $r$ tends to $0$ is related to the $L^2$-norm of the density of $\mu$, as the following lemma states.
	
\begin{lemma}\label{l.density1}
There exists a constant $C_d$ such that if a finite measure $\mu$ on $\R^d$ satisfies
	$$\liminf_{r\to 0^+}\frac{\|\mu\|_r}{r^d}<\infty ,$$
	then $\mu$ is
	absolutely continuous with respect to the Lebesgue measure on $\R^d$, its density  $\frac{d\mu}{d m_d}$ is in $L^2(\R^d)$ and satisfies $\|\frac{d\mu}{d m_d}\|_{L^2(\R^d)} \le C_d \liminf_{r\to 0^+}\frac{\|\mu\|_r}{r^d}$.
	\end{lemma}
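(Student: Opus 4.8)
The plan is to recognize the set function $x\mapsto\mu(B(x,r))$ as a mollification of $\mu$ and then extract a weak limit in $L^2$. Write $v_d=m_d(B(0,1))$ and, for $r>0$, set $\mu_r(x)=\frac{1}{v_d r^d}\,\mu(B(x,r))$ for $x\in\R^d$. Since balls are symmetric, $\mu(B(x,r))=\int \mathbbm{1}_{B(0,r)}(x-y)\,d\mu(y)$, so $\mu_r=\mu*\rho_r$ with $\rho_r=\frac{1}{v_d r^d}\mathbbm{1}_{B(0,r)}$ a nonnegative kernel of total mass $1$ supported in $B(0,r)$. By Fubini,
\begin{equation*}
\|\mu_r\|_{L^2(\R^d)}^2=\frac{1}{v_d^2 r^{2d}}\int_{\R^d}\mu(B(x,r))^2\,dm_d(x)=\Big(\frac{\|\mu\|_r}{v_d r^d}\Big)^2 ,
\end{equation*}
hence $\|\mu_r\|_{L^2(\R^d)}=\frac{1}{v_d}\cdot\frac{\|\mu\|_r}{r^d}$ (each $\mu_r$ lies in $L^1\cap L^\infty\subset L^2$, so this is a genuine identity of finite quantities).

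Next I would feed in the hypothesis. Put $L=\liminf_{r\to0^+}\|\mu\|_r/r^d<\infty$ and choose $r_n\downarrow0$ with $\|\mu\|_{r_n}/r_n^d\to L$, so that $\|\mu_{r_n}\|_{L^2}\to L/v_d$. As $(\mu_{r_n})$ is a bounded sequence in the Hilbert space $L^2(\R^d)$, after passing to a subsequence it converges weakly to some $g\in L^2(\R^d)$, and weak lower semicontinuity of the norm gives $\|g\|_{L^2}\le\liminf_n\|\mu_{r_n}\|_{L^2}=L/v_d$. It remains to identify $g$ with the density of $\mu$. For $\varphi\in C_c(\R^d)$, Fubini (and evenness of $\rho_r$) gives $\int\varphi\,\mu_r\,dm_d=\int(\varphi*\rho_r)(y)\,d\mu(y)$; since $\varphi*\rho_r\to\varphi$ uniformly with $\|\varphi*\rho_r\|_\infty\le\|\varphi\|_\infty$ and $\mu$ is finite, dominated convergence yields $\int\varphi\,\mu_r\,dm_d\to\int\varphi\,d\mu$ as $r\to0$. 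On the other hand $\varphi\in L^2(\R^d)$, so weak convergence gives $\int\varphi\,\mu_{r_n}\,dm_d\to\int\varphi\,g\,dm_d$. Comparing the two limits, $\int\varphi\,d\mu=\int\varphi\,g\,dm_d$ for every $\varphi\in C_c(\R^d)$, hence $\mu=g\,dm_d$; thus $\mu$ is absolutely continuous, $\frac{d\mu}{dm_d}=g\in L^2(\R^d)$, and $\|\frac{d\mu}{dm_d}\|_{L^2}\le L/v_d$. This proves the lemma with $C_d=1/v_d$.

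The only genuinely delicate point is this last one — upgrading weak $L^2$-convergence of the mollifications to the identification of the limit $g$ with $\mu$ itself — which is exactly the assertion that $\mu*\rho_r\to\mu$ in the weak-$\ast$ sense of measures; finiteness of $\mu$ makes it routine. Everything else is the two Fubini computations displayed above together with the standard facts that bounded sequences in a Hilbert space admit weakly convergent subsequences and that the norm is weakly lower semicontinuous. No estimate specific to the dynamical setting enters here, which is why this regularity criterion can be applied later, in the proof of Theorem \ref{teo.d}, to the conditional measures of $\mu_T$ along the $\R^d$-fibers.
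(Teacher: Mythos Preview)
Your proof is correct and follows essentially the same approach as the paper: both define the averaged function $\mu_r(x)=\mu(B(x,r))/m_d(B(x,r))$, observe that $\|\mu_r\|_{L^2}=C_d\,\|\mu\|_r/r^d$ with $C_d=1/v_d$, extract a weak $L^2$-limit along a subsequence $r_n\to 0$, and identify the limit with the density of $\mu$ by testing against $\varphi\in C_c(\R^d)$. Your write-up is in fact a bit more careful than the paper's in justifying the identification step via the convolution representation $\mu_r=\mu*\rho_r$ and dominated convergence.
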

	
\begin{proof}
Let  $C_d$ be the constant such that $m_d(B(z,r)) = C_d^{-1} r^d$ for every $z\in\R^d$ and every $r>0$.
		
Define the function $J_r(z)=\frac{\mu(B(z,r))}{m_d(B(z,r))}$ and note that $  \|J_r\|_{L^2(\R^d)} =C_d  \frac{\|\mu\|_r}{ r^{d}} $. Then the condition $\liminf_{r\to 0^+}\frac{\|\mu\|_r}{r^d}<\infty$ imply that there exists a uniformly bounded subsequence of $J_{r}$ in $L^2(\R^d)$ with $r\rightarrow 0^+$.
Thus we can consider a subsequence $r_n$ such that $J_{r_n}$ converges weakly to some $J_\infty\in L^{2}$,
following that
		$$
		\int_{\R^d}\phi J_\infty\,dm_d=\lim_{n\to \infty}\int_{\R^d}\phi J_{r_n}\,dm_d=\int_{\R^d}\phi \,d\mu
		$$
		for every continuous function $\phi$ with compact support. So we have $\frac{d\mu}{d m_d}(z)= J_\infty (z)$ and
		$$\|\frac{d\mu}{d m_d}\|_{L^2(\R^d)} \le C_d \liminf_{r\to 0^+}\frac{\|\mu\|_r}{r^d}$$
	\end{proof}
	
	Given any finite measure $\mu$ on $\torus\times \R^d$, we consider $\{\mu_x\}_{x\in\torus}$  the disintegration of $\mu$ with respect to the partition of $\torus \times \R^d$ into $\{x\}\times\R^d$, $x\in\torus$. We will define a semi-norm for measures on $\torus\times \R^d$ integrating the norm $\|\mu_x\|_r$ into the torus $\torus$.
	
	\begin{definition}
		Given a finite measure $\mu$ on $\torus\times\R^d$ and $r>0$, we define the \textbf{semi-norm} $|||\mu|||_r$ by
		\begin{equation}
		|||\mu|||^2_r=\underset{\torus}{\int} \frac{\|\mu_x\|^2_r}{r^{2d}}\,dm(x)
		\end{equation}
	\end{definition}
	
	As a consequence of Lemma \ref{l.density1} we have a criterion of absolute continuity for measures $\mu$ on $\torus\times\R^d$, provided by the following:
	
	\begin{corollary}\label{c.density1}
		There exists a constant $C_d$ such that if a  finite measure $\mu$ on $\torus\times \R^d$ satisfies $(\pi_1)_*\mu=m$ and
		$$ \liminf_{r\to 0^+} |||\mu|||_r < \infty, $$
		then $\mu$ is absolutely continuous with respect to the volume $v=m\times m_d$ on $\torus\times\R^d$,
		its density $\frac{d\mu}{dv}$ is in $L^2(\torus\times\R^d)$
		and satisfies $\|\frac{d\mu}{dv}\|_{L^2(\torus\times\R^d)} \leq C_d  \liminf_{r\to 0^+} |||\mu|||_r$.
	\end{corollary}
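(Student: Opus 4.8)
The plan is to pass to the disintegration $\{\mu_x\}_{x\in\torus}$, apply Lemma \ref{l.density1} to each fiber measure $\mu_x$ on $\R^d$, and then reassemble the fiberwise densities into a global $L^2$ density on $\torus\times\R^d$; the passage from the integrated bound $\liminf_{r\to 0^+}|||\mu|||_r<\infty$ to pointwise (in $x$) control will be handled by Fatou's lemma. Assuming that this liminf is finite (otherwise there is nothing to prove), I would first pick a sequence $r_n\to 0^+$ along which $|||\mu|||_{r_n}$ stays below some $L$, with $L$ arbitrarily close to $\liminf_{r\to 0^+}|||\mu|||_r$; by the definition of the semi-norm this means $\int_\torus r_n^{-2d}\,\|\mu_x\|_{r_n}^2\,dm(x)\le L^2$ for all $n$. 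Since the integrands are nonnegative, Fatou's lemma gives
$$\int_\torus \ell(x)^2\,dm(x)\le L^2,\qquad \ell(x):=\liminf_{n\to\infty}\frac{\|\mu_x\|_{r_n}}{r_n^{d}},$$
so $\ell$ is finite for $m$-a.e.\ $x$ and $\ell\in L^2(\torus)$ with $\|\ell\|_{L^2(\torus)}\le L$.

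Next, since $(\pi_1)_*\mu=m$ is a probability measure, $\mu$ itself is a probability measure and the disintegration consists of probability measures $\mu_x$ for $m$-a.e.\ $x$. For each such $x$ with $\ell(x)<\infty$ one has $\liminf_{r\to 0^+}\|\mu_x\|_r/r^d\le\ell(x)<\infty$ (the full liminf is dominated by the liminf along the subsequence), so Lemma \ref{l.density1} applies to the finite measure $\mu_x$ and produces a density $h_x:=\frac{d\mu_x}{dm_d}\in L^2(\R^d)$ with $\|h_x\|_{L^2(\R^d)}\le C_d\,\ell(x)$, where $C_d$ is exactly the constant furnished by that lemma.

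Finally I would assemble the function $H(x,y):=h_x(y)$ on $\torus\times\R^d$. Because $(\pi_1)_*\mu=m$, the disintegration formula reads $\mu=\int_\torus \mu_x\,dm(x)$, and since $\mu_x=h_x\,m_d$ for a.e.\ $x$ this gives $\mu=H\cdot(m\times m_d)$; by Tonelli's theorem,
$$\|H\|_{L^2(\torus\times\R^d)}^2=\int_\torus\|h_x\|_{L^2(\R^d)}^2\,dm(x)\le C_d^2\int_\torus\ell(x)^2\,dm(x)\le C_d^2L^2.$$
Letting $L\downarrow\liminf_{r\to 0^+}|||\mu|||_r$ yields the asserted absolute continuity and the $L^2$ bound. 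The one point requiring care — and what I expect to be the main (though entirely routine) obstacle — is the joint measurability of $x\mapsto h_x$, needed both to make $H$ a bona fide element of $L^2$ of the product and to legitimize the reassembly $\mu=H\cdot(m\times m_d)$. This follows by observing that the approximating ratios $(x,y)\mapsto \mu_x(B(y,r))/m_d(B(y,r))$ are jointly measurable (a consequence of measurability of the disintegration) and that, by the Lebesgue differentiation theorem, $h_x(y)$ is their a.e.\ limit along the fixed sequence $r_n$ on the full-measure set where that limit exists.
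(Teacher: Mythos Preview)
Your proposal is correct and follows essentially the same route as the paper: apply Fatou to pass the bound on $|||\mu|||_r$ inside the integral over $\torus$, invoke Lemma~\ref{l.density1} fiberwise to obtain densities $h_x\in L^2(\R^d)$, and reassemble via the disintegration (using $(\pi_1)_*\mu=m$) to get the global $L^2$ density. The only cosmetic difference is that you fix a sequence $r_n$ realizing the liminf before applying Fatou, whereas the paper applies Fatou directly to the full liminf; your explicit discussion of the joint measurability of $(x,y)\mapsto h_x(y)$ is a point the paper leaves implicit.
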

	\begin{proof}
		From Fatou's inequality, we have
		$$
		\int_{\torus}\liminf_{r\to 0} \frac{\|\mu_x\|_r^2}{r^{2d}}\,dm(x)\le \liminf_{r\to 0}\int_{\torus}\frac{\|\mu_x\|_r^2}{r^{2d}}\,dm(x) <\infty.
		$$
	This inequality and the previous lemma imply that, for Lebesgue almost every $x\in \torus$, $\mu_x$ is absolutely continuous
		with respect to the Lebesgue measure on $\R^d$ and the density function $g_x$ satisfies $\|g_x\|_{L^2(\R^d)}\le C_d\liminf_{r\to 0}\frac{\|\mu_x\|_r}{r^{d}}$.
		
		Defining  $g:\torus\times\R^d\to \R$ by $g(x,y)=g_x(y)$, we have
		$g\in L^{2}(\torus\times\R^d)$ with $\|g\|^2_{L^{2}(\R^u \times \R^d)}\leq C_d^2 \liminf_{r\rightarrow 0^+}  \int_{\torus}\frac{\|\mu_x\|_r^2}{r^{2d}}\,dm(x)$ and
		\begin{align*}
		\int_{\torus\times\R^d}\phi(x,y)g(x,y)\,dm(x)dm_d(y)&=\int_\torus\left[\int_{\R^d}\phi(x,y)g_x(y)dm_d(y)\right]dm(x)\\
		&=\int_\torus\int_{\R^d}\phi(x,y)\,d\mu_x(y)dm(x)\\
		&=\int_{\torus\times\R^d}\phi(x,y)\,d\mu(x,y)
		\end{align*}
		for every continuous function $\phi$ with compact support on $\torus\times\R^d$. Above we used that $\hat{\mu}=(\pi_1)_*\mu=m$ in the disintegration of $\mu$ with respect to the partition into sets $\{x\}\times\R^d$. Then, $\mu$ is absolutely continuous with respect to the volume measure on $\torus\times\R^d$ and $g=\frac{d\mu}{dv}$.
	\end{proof}
	
	\subsection{Useful lemmas on the transversality}
	
	In the following we will state a quantitative lemma of change of variables by maps $g$ satisfying $\fm(Dg(x)) \geq \delta$.
	
	\begin{lemma}\label{c.submersoes}
		Given an open convex set $U\subset\R^u$, $g\in C^2(U,\R^d)$ with $\|g\|_{C^2}\leq 2\alpha_0$ and a real number $ \delta>0$ such that
		$\diam(U)\le \delta/4\alpha_0$ and
		$\fm(Dg(x)) \ge\delta$ for every $x\in U$.
		Then there exists a constant $C_{\delta}>0$  such that
		\begin{equation}
		m_u(g^{-1}B(z,r))< C_{\delta} r^d
		\end{equation}
		for every $z\in\R^d$ and every $r>0$.
	\end{lemma}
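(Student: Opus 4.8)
\emph{Proof strategy.} The plan is to reduce this $u$-dimensional preimage estimate to a one-dimensional-in-$\R^d$ computation by slicing $U$ along a well-chosen orthogonal splitting $\R^u=W\oplus V$ with $\dim W=d$ (this uses $u\ge d$), applying Fubini, and estimating each slice via a bi-Lipschitz bound.

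First I would fix a base point $x_0\in U$ and, using that $\fm(Dg(x_0))$ is a supremum over the compact Grassmannian of $d$-planes, choose a $d$-dimensional subspace $W\subset\R^u$ realizing it, so $\|Dg(x_0)v\|\ge\fm(Dg(x_0))\|v\|\ge\delta\|v\|$ for all $v\in W$. Since $\|g\|_{C^2}\le 2\alpha_0$ makes $Dg$ Lipschitz with constant $2\alpha_0$ on the convex set $U$, the hypothesis $\diam(U)\le\delta/4\alpha_0$ gives $\|Dg(x)-Dg(x_0)\|\le\delta/2$ for every $x\in U$, hence $\|Dg(x)v\|\ge(\delta/2)\|v\|$ for every $x\in U$ and every $v\in W$. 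Thus the single plane $W$ is uniformly transversal over all of $U$.

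Next, writing points of $\R^u=V\oplus W$ (with $V=W^\perp$) as $(t,w)$ and setting $U_t=\{w\in W : (t,w)\in U\}$, which is a convex subset of $W\cong\R^d$, I would look at the slice maps $g_t(w)=g(t,w)$. From $g_t(w_1)-g_t(w_2)=\int_0^1 Dg_t\big(w_2+s(w_1-w_2)\big)(w_1-w_2)\,ds$, the lower bound $\|Dg_t(w_2)(w_1-w_2)\|\ge(\delta/2)\|w_1-w_2\|$ together with the second-order remainder estimate $\alpha_0\|w_1-w_2\|^2\le(\delta/4)\|w_1-w_2\|$ (again using $\diam(U_t)\le\diam(U)\le\delta/4\alpha_0$) yields $\|g_t(w_1)-g_t(w_2)\|\ge(\delta/4)\|w_1-w_2\|$. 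Hence $g_t$ is injective on $U_t$ with a $(4/\delta)$-Lipschitz inverse, so $g_t^{-1}(B(z,r))$ has diameter at most $8r/\delta$, lies in a ball of radius $8r/\delta$, and therefore $m_d\big(g_t^{-1}(B(z,r))\big)\le C_d^{-1}(8r/\delta)^d$.

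Finally I would integrate over $t$: by Fubini for the orthogonal splitting, $m_u\big(g^{-1}B(z,r)\big)=\int_{\pi_V(U)} m_d\big(g_t^{-1}(B(z,r))\big)\,dm_{u-d}(t)\le m_{u-d}(\pi_V(U))\cdot C_d^{-1}(8/\delta)^d r^d$, where $\pi_V$ is the orthogonal projection onto $V$; since $\pi_V$ is $1$-Lipschitz, $\diam(\pi_V(U))\le\delta/4\alpha_0$, so $m_{u-d}(\pi_V(U))\le C_{u-d}^{-1}(\delta/4\alpha_0)^{u-d}$ (with $C_{u-d}$ the analogue of $C_d$ in dimension $u-d$). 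Combining these gives the bound with $C_\delta=C_{u-d}^{-1}C_d^{-1}8^d(4\alpha_0)^{-(u-d)}\delta^{u-2d}$, enlarged slightly to make the inequality strict. The only delicate point is the bookkeeping that makes one slicing plane $W$, selected at a single point, serve uniformly on all of $U$ and deliver a genuine bi-Lipschitz estimate on every slice; this is precisely where the smallness hypothesis $\diam(U)\le\delta/4\alpha_0$ is spent, once to control the oscillation of $Dg$ and once to dominate the quadratic Taylor remainder.
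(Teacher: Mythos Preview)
Your proof is correct and follows essentially the same approach as the paper: fix one $d$-plane $W$ at a single point of $U$, use $\diam(U)\le\delta/4\alpha_0$ and the $C^2$ bound to propagate the lower bound $\fm(Dg|_W)\ge\delta/2$ to all of $U$, slice $U$ by affine copies of $W$, estimate each slice, and integrate via Fubini over $\pi_{W^\perp}(U)$. The only cosmetic difference is that on each slice the paper shows $g_t$ is a diffeomorphism and bounds the measure via change of variables with $|\det Dg_t|\ge(\delta/2)^d$, whereas you bound the diameter of $g_t^{-1}(B(z,r))$ directly from the bi-Lipschitz estimate; both yield a bound of the form $C\delta^{-d}r^d$ and the resulting constants $C_\delta$ differ only by absolute factors.
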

	
	\begin{proof}
		Let us first suppose that $u=d$. Using the convexity of $U$ and the Taylor Formula, for $x\neq y\in U$ we have
		$
		g(y)=g(x)+Dg(x)(y-x) + \theta(y-x),
		$
		where $\|\theta(y-x)\|\le \ds \alpha_0\|y-x\|^2$. This implies that
		$$\|g(y)-g(x)\|\ge \delta\|y-x\|-2\alpha_0\|y-x\|^2=\Big(\delta/2- \alpha_0\|y-x\|\Big)\|y-x\|>0.$$
		
		Hence $g$ is injective, which implies that it is a diffeomorphism into the image. By the Change of Variables Theorem, for every $z\in\R^d$ and $r>0$ we have:
		\begin{align*}
		m_d(g^{-1}(B(z,r))) 
		&= \int_{g^{-1}(B(z,r)\cap g(U))} 1 \, dm_d(x)\\
		&= \int_{B(z,r)\cap g(U)}|\det{Dg^{-1}(y)}|\, dm_d(y)\\
		&\le \delta^{-d} C_d^{-1}r^d
		\end{align*}
		
		Now, suppose that $u>d$. Fix a point $a_0\in U$, then there is a $d$-dimensional subspace $W\subset\R^u$ such that $\fm(Dg(a_0)_{|W})\ge \delta$. As $\diam(U)<\delta/{4\alpha_0}$ and $\|g\|_{C^2}\leq 2\alpha_0$, we have $\fm(Dg(x)_{|W}) \geq \delta/2$ for every $x\in U$. 
		
		For every $a\in\R^u$, we denote $W_a:=a+W=\{a+x\in\R^u: x\in W\}$ and consider the usual orthogonal projection $\pi_{W^{\perp}}:\R^u \to W^\perp$, for each $y\in \pi_{W^{\perp}}(U)$ define $U_y=W_y\cap U$ and $h_y:=g_{|U_y}$. So, $h_y$ is a $C^2$ transformation such that $\|h_y\|_{C^2} \leq 2\alpha_0$ and $\fm(Dh_y(x))\ge \delta/2$ for every $x\in U_y$. Therefore  $h_y$ is a diffeomosphism over the image $h_y(U_y)$ and
		$$
		m_d(h_y^{-1}(B(z,r)))\le \delta^{-d} C_d^{-1} r^d
		$$
		for every $z\in\R^d$.
		
Finally, we use that $\diam(\pi(U))<\delta/4\alpha_0$ and Fubini's Theorem:
		\begin{align*}
		m_u(g^{-1}(B(z, r)))&=\int_{\pi_{W^{\perp}}(U)}m_d(h_y^{-1}(B(z,r)))\,dm_{u-d}(y)\\
		&\le m_{u-d}(\pi_{W^{\perp}}(U))\delta^{-d} C_d^{-1} r^d \\
		&\le C_{u-d}^{-1} \Big( \frac{\delta}{8\alpha_0} \Big)^{u-d} \delta^{-d} C_d^{-1} r^d = C_{\delta} r^d
		\end{align*}
		for every $z\in\R^d$ and $r>0$.
	\end{proof}
	
	\begin{lemma} Given $\bc$ in $I^p$ and  $\ba, \bb \in I^q(\bc)$, if  $\ba$ and $\bb$ are transversal on
		$\bc$, then
		$$\fm (DS_\bc(x,\ba\bu) - DS_\bc(x,\bb\bv) ) > \theta^{q}\alpha_{0} $$
		for every $x\in \overline{\mathcal{R}(\bc)}$ and every $\bu \in I^{\infty} (\ba)$ and $\bv \in I^{\infty} (\bb)$.
	\end{lemma}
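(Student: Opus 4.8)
The plan is to peel off the infinite tails of $\ba\bu$ and $\bb\bv$: I would write $DS_\bc(\,\cdot\,,\ba\bu)$ as $DS_\bc(\,\cdot\,,\ba)$ plus a term bounded by $\theta^{q}\alpha_{0}$ (and likewise for $\bb\bv$), so that the transversality of the \emph{finite} words $\ba,\bb$ on $\bc$ --- which holds at scale $3\theta^{q}\alpha_{0}$ --- survives the perturbation by these tails.

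\emph{Step 1 (cocycle decomposition).} For $\ba\in I^{q}$ and $\bw\in I^{\infty}(\ba)$ I would first establish the identity
\[
S(x,\ba\bw)=S(x,\ba)+\sC^{q}\,S(\ba(x),\bw),\qquad x\in\D(\ba\bw),
\]
obtained by splitting the defining sum $S(x,\ba\bw)=\sum_{i}\sC^{i-1}f([\ba\bw]_{i}(x))$ at $i=q$ and checking, via the Markov property, that $[\ba\bw]_{q+j}(x)=[\bw]_{j}(\ba(x))$ for $j\ge 1$. Since $\ba\in I^{q}(\bc)$ together with $\bw\in I^{\infty}(\ba)$ forces $\mathcal{R}(\bc)\subset\D(\ba\bw)$, and since the inverse branch $x\mapsto\ba(x)$ of $\sE^{q}$ through $\mathcal{R}(\ba)$ maps $\overline{\mathcal{R}(\bc)}$ into a single closed atom of $\mathcal{R}$, all three functions extend $C^{2}$ to $\overline{\mathcal{R}(\bc)}$ and the identity persists there for the extensions; I keep writing $S_\bc$, with $S_\bc(\ba(x),\bw)$ understood as the closed-atom $C^{2}$-extension of $S(\,\cdot\,,\bw)$ precomposed with the extension of $\ba(\cdot)$.

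\emph{Step 2 (the tail is $O(\theta^{q})$).} Differentiating the identity in $x$ and using that $\sE$ is linear, so $\|D\ba(x)\|\le\|\sE^{-1}\|^{q}=\underline{\mu}^{-q}$, I get
\[
DS_\bc(x,\ba\bw)=DS_\bc(x,\ba)+\sC^{q}\,DS_\bc(\ba(x),\bw)\,D\ba(x).
\]
Combining this with the uniform bound $\|DS_\bc(\,\cdot\,,\bw)\|\le\alpha_{0}$ --- which follows by summing the geometric series defining $S(\,\cdot\,,\bw)$, since $\sum_{i\ge1}\overline{\lambda}^{\,i-1}\underline{\mu}^{-i}=\underline{\mu}^{-1}/(1-\theta)\le 1/(1-\overline{\lambda})$ using $\underline{\mu}>1$ and $\theta=\overline{\lambda}\underline{\mu}^{-1}$ --- and with $\|\sC^{q}\|\le\overline{\lambda}^{\,q}$, yields
\[
\big\|\sC^{q}\,DS_\bc(\ba(x),\bw)\,D\ba(x)\big\|\le\overline{\lambda}^{\,q}\,\alpha_{0}\,\underline{\mu}^{-q}=\theta^{q}\alpha_{0}.
\]

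\emph{Step 3 (combine with transversality).} Applying Step 2 to $\ba\bu$ and to $\bb\bv$ and subtracting, I set $A:=DS_\bc(x,\ba)-DS_\bc(x,\bb)$ and let $B$ be the difference of the two tail terms, so $\|B\|\le 2\theta^{q}\alpha_{0}$; on the other hand, transversality of $\ba$ and $\bb$ on $\bc$ gives $\fm(A)>3\theta^{q}\alpha_{0}$ for every $x\in\overline{\mathcal{R}(\bc)}$. Using $\fm(M)=\sup_{\dim W=d}\inf_{v\in W,\,\|v\|=1}\|Mv\|$ together with $\|(A+B)v\|\ge\|Av\|-\|B\|$ one obtains $\fm(A+B)\ge\fm(A)-\|B\|$, hence
\[
\fm\big(DS_\bc(x,\ba\bu)-DS_\bc(x,\bb\bv)\big)\ge\fm(A)-\|B\|>3\theta^{q}\alpha_{0}-2\theta^{q}\alpha_{0}=\theta^{q}\alpha_{0},
\]
which is the assertion. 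The main obstacle is Step 1: making the coding bookkeeping fully rigorous --- namely $\ba\bw\in I^{\infty}(x)$ for $x\in\mathcal{R}(\bc)$, the matching $[\ba\bw]_{q+j}(x)=[\bw]_{j}(\ba(x))$, and the legitimacy of passing to the $C^{2}$-extensions on the closed atoms. Once that identity is in place, Steps 2 and 3 are routine estimates.
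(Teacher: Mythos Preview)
Your proof is correct and follows essentially the same approach as the paper: both peel off the tail $\sum_{i>q}\sC^{i-1}Df([\,\cdot\,]_i(x))E^{-i}$, bound each tail by $\theta^{q}\alpha_0$, and then use the triangle inequality on a witnessing $d$-dimensional subspace to pass from $\fm(A)>3\theta^{q}\alpha_0$ to $\fm(A+B)>\theta^{q}\alpha_0$. Your presentation via the cocycle identity $S(x,\ba\bw)=S(x,\ba)+\sC^{q}S(\ba(x),\bw)$ is just a repackaging of the paper's direct series manipulation, and your extra care in Step~1 about the $C^{2}$-extensions to $\overline{\mathcal{R}(\bc)}$ is a point the paper treats implicitly.
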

	\begin{proof}
		
		For every unitary vector $v\in\R^u$, we have
		$$
		\big|\big| (  DS_\bc(x,\ba\bu) - DS_\bc(x,\ba) ) v \big|\big| = \big|\big|    \sum_{i=q+1}^{\infty} \sC^{i-1} Df\big([\ba\bu]_{i} (x)\big)E^{-i}  v \big|\big| \leq  \|\sC\|^{q}\|E^{-1}\|^{q}  \alpha_0
		$$
		
		Analogously, we also have $\| (  DS_\bc(x,\bb\bv) - DS_\bc(x,\bb) ) v  \| \leq  \|\sC\|^{q}\|E^{-1}\|^{q}  \alpha_0 $. By assumption, there is a $d$-dimensional subspace $W\subset \R^d$ such that $\|(DS_\bc(x,\ba) - DS_\bc(x,\bb) ) w > 3\theta^q \alpha_0 \|w\|$ for every $w\in W$.
		
		Reminding that $\theta=\|C\| \|E^{-1}\|$, for every vector $w\in W$ we have
		\begin{align*}
		\| (  DS_\bc(x,\ba\bu) - &DS_\bc(x,\bb\bv) ) w\| \geq  \| (  DS_\bc(x,\ba) - DS_\bc(x,\bb) )  w \|   -\\
		&- \| (  DS_\bc(x,\ba\bu) - DS_\bc(x,\ba) ) w\|   -   \| (  DS_\bc(x,\bb\bv) - DS_\bc(x,\bb) ) w\|  \\
		&> 3\theta^{q}  \alpha_0 \|w\| - \theta^{q}  \alpha_0 \|w\|  - \theta^{q}  \alpha_0 \|w\|  = \theta^{q}  \alpha_0 \|w\|.
		\end{align*}
It follows what we want taking the infimum on $w$.
\end{proof}
	
	\subsection{Symbolic description of $\mu_T$}
	
	Let us give a symbolic description of the dynamic $T$ and of the measure $\mu_T$. Consider $\hat{M} = \bigsqcup_{i\in \overline{I}} \mathcal{R}(i) \times I^\infty(\mathcal{R}(i))\subset \torus\times I^\infty$ and
	$
	\ds\hat{T}: \hat{M} \to \hat{M}$ given by
	\begin{equation}
	\hat{T}(x,\ba)=(\sE(x),\pi(x)\ba)
	\end{equation}
	
	$\hat T$ is well defined, because if $\ba\in I^{\infty}(x)$ then $\pi(x)\ba\in I^{\infty}(\sE(x))$. Moreover,
	for $\ba\in I^{\infty}(x)$, we have the relation
	\begin{equation}
	S(\sE(x),\pi(x)\ba)=f(x) + \sC(S(x,\ba)).
	\end{equation}
	
	This implies that $\hat{T}$ is semi-conjugated to $T$, that is, $T\circ h=h\circ \hat T$, where the semi-conjugation
	$h:\ds \hat{M} \to \torus \times \R^d$ is given by
	\begin{equation}
	h(x,\ba)=(x,S(x,\ba))
	\end{equation}
	
	Considering the numbers
	$P_{ij}=\frac{1}{N}$ if $\mathcal{R}(i)\subset E(\mathcal{R}(j))$ and $P_{ij}=0$ otherwise. The  probability $\hat{\mu}$ on $\torus \times I^{\infty}$ is defined for any $U\subset \mathcal{R}(i)$ and any cylinder
	$V=[1;a_1,\dots,a_n]$ as
	\begin{equation}
	\hat{\mu}(U\times V):=m(U)P_{ia_1}P_{a_1a_2}\dots P_{a_{n-1}a_n}
	\end{equation}
	and extended to the measurable subsets of $\torus \times I^{\infty}$. Note that $\hat{\mu}(\hat{M})=1$. We will check that $h_*\hat{\mu}$ is the SRB measure of $T$.
	
	\begin{lemma}
		The measure $h_*\hat{\mu}$ is the SRB measure of $T$.
	\end{lemma}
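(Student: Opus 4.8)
The goal is to show that $h_*\hat\mu$ is the SRB measure $\mu_T$ of $T$. The plan is to verify three things: (i) $h_*\hat\mu$ is $T$-invariant; (ii) $h_*\hat\mu$ is a probability measure supported on the attractor $\Lambda$; and (iii) $h_*\hat\mu$ has the SRB property, i.e. its conditional measures along unstable leaves are absolutely continuous with respect to the induced Lebesgue measure — equivalently, since $T_{|\Lambda}$ is a transitive hyperbolic endomorphism with a \emph{unique} SRB measure, it suffices to identify $h_*\hat\mu$ with that measure, and uniqueness then finishes the argument.

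First I would check invariance. Since $T\circ h = h\circ\hat T$ (established just above via the cocycle relation $S(\sE(x),\pi(x)\ba)=f(x)+\sC(S(x,\ba))$), it is enough to show $\hat T_*\hat\mu=\hat\mu$. This is a direct computation with the cylinder formula defining $\hat\mu$: the matrix $P=(P_{ij})$ with $P_{ij}=1/N$ when $\mathcal R(i)\subset \sE(\mathcal R(j))$ is column-stochastic (each $\mathcal R(j)$ has exactly $N=|\det\sE|$ preimage rectangles under $\sE$, so each column sums to $1$), and the normalized Lebesgue measure $m$ on $\torus$ together with the fact that $\sE$ is affine and expands volume by the constant factor $N$ on each atom makes $m$ the stationary base measure: $m(\sE^{-1}(U)\cap\mathcal R(i))=\tfrac1N m(U)$ for $U\subset\mathcal R(j)$ with $\mathcal R(i)$ a preimage atom. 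Pushing a cylinder set forward under $\hat T$ and summing over the finitely many preimage symbols reproduces the defining formula for $\hat\mu$; this is the routine part. That $h_*\hat\mu$ is a probability is immediate from $\hat\mu(\hat M)=1$, and its support lies in $\Lambda$ because $h(\hat M)\subset \bigcap_{n\ge0}T^n(M)$ — each point $(x,S(x,\ba))$ with $\ba\in I^\infty(x)$ is, for every $n$, the $T^n$-image of a point of $\mathcal R([\ba]_n)\times\{0\}\subset M$, by the very definition of $S(x,\ba)=\lim_n S(x,[\ba]_n)$ and of the graphs $S(\cdot,\ba)$.

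The substantive step is the SRB property. The unstable direction of $T$ is the horizontal $\torus$-direction, and the unstable leaves through $\Lambda$ are exactly the graphs $\{(x,S_{\bc}(x,\ba)) : x\in\D(\ba)\}$ for $\ba\in I^\infty$ — these are the images $T^n(\mathcal R([\ba]_n)\times\{0\})$ in the limit. Under the semiconjugacy $h$, the partition of $\hat M$ into sets $\mathcal R(i)\times\{\ba\}$ (fixed symbol sequence) maps to pieces of unstable leaves, and on each such piece $h(x,\ba)=(x,S(x,\ba))$ is a graph over $x\in\mathcal R(i)$, hence the conditional of $h_*\hat\mu$ on it is the pushforward under $x\mapsto(x,S(x,\ba))$ of the conditional of $\hat\mu$, which is (a normalization of) $m$ restricted to $\mathcal R(i)$ — Lebesgue. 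Passing from this finite partition to the measurable partition into full unstable leaves requires a martingale/increasing-partition argument (refining the symbol coordinate), together with the standard bounded-distortion estimate for the holonomies, which holds here because the inverse branches $g_{y,x}$ are affine and $S(\cdot,\ba)$ is uniformly $C^2$ on each atom; this yields that the conditionals of $h_*\hat\mu$ along unstable leaves are absolutely continuous. Combined with invariance and compact support on $\Lambda$, this is precisely the defining property of an SRB measure, and uniqueness of the SRB measure for the transitive hyperbolic endomorphism $T_{|\Lambda}$ forces $h_*\hat\mu=\mu_T$.

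The main obstacle I anticipate is the passage from the finite symbolic partition to the measurable partition into unstable manifolds with control of the conditionals — i.e. justifying that the absolute continuity of conditionals on cylinder-pieces survives in the limit. In practice this is handled by the classical Rokhlin disintegration together with a uniform distortion bound; the affine/linear structure of $\sE$ and $\sC$ makes the distortion bound essentially trivial (the relevant Jacobians along unstable leaves are locally constant), so the difficulty is more bookkeeping than genuine analysis. Everything else — column-stochasticity of $P$, stationarity of $m$, the cocycle identity for $S$, and $\hat\mu(\hat M)=1$ — is mechanical.
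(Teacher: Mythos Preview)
Your approach is correct but differs from the paper's. You verify the SRB property by showing that the conditionals of $h_*\hat\mu$ along unstable leaves (the graphs of $S(\cdot,\ba)$) are absolutely continuous, and then invoke uniqueness of the SRB measure for the transitive hyperbolic endomorphism $T_{|\Lambda}$. The paper instead proves that $\hat\mu$ is \emph{mixing} for $\hat T$ (reducing to mixing of $m$ under $\sE$ via the observation that for $n$ large $\hat T^{-n}(U\times[1;a_1,\dots,a_k])$ has the form $\tilde U\times I^\infty$), hence $h_*\hat\mu$ is ergodic; then, since $(\pi_1)_*h_*\hat\mu=m$ and the vertical fibers $\{x\}\times\R^d$ are stable manifolds, the basin $B(h_*\hat\mu)$ is saturated by vertical fibers and projects onto a full-$m$-measure set, so it has full volume in $\torus\times\R^d$. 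This identifies $h_*\hat\mu$ directly as the (physical) SRB measure.

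The paper's route is shorter and avoids precisely the obstacle you flagged --- the passage from cylinder-level conditionals to the measurable partition into unstable leaves --- by bypassing unstable disintegration altogether. Your route, on the other hand, makes the geometric SRB structure explicit (Lebesgue conditionals on unstable graphs), which is conceptually informative and would be the natural argument if ergodicity were not so cheap here. Both rely on the same invariance computation for $\hat\mu$ that you sketched.
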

	\begin{proof}
		For every integers $n\geq 1$ and $k\geq 0$ we have:
		$$\hat{T}^{-n-k}(U \times [1;a_1,\cdots,a_n]) =
		E^{-k} \big( E^{-n}(U)\cap E^{-n+1}(\mathcal{R}(a_1)) \cap \cdots \cap  \mathcal{R}(a_n) \big) \times I^\infty $$
		
		It follows that $\hat{\mu}$ is an invariant probability for $\hat{T}$, because for any $U\subset \mathcal{R}(i)$
		\begin{align*}
		\hat{\mu}(\hat{T}^{-1}(U\times [1;a_1,\dots,a_n])) &=\hat{\mu}(\left(\mathcal{R}(a_1)\cap \sE^{-1}(U)\right)\times [1;a_2,a_3,\dots,a_n]) \\
		&=P_{ia_1}m(U)P_{a_1a_2}\dots P_{a_{n-1}a_n} 
		=\hat{\mu}(U\times [1;a_1,\dots,a_n]).
		\end{align*}
		
		To see that $\hat{\mu}$ is mixing for $\hat{T}$, for any sets $A_1, A_2$ of the form $U\times [1;a_1,\dots,a_n]$ with $U\subset\mathcal{R}(j)$ we consider an integer $N\geq 1$ such that $\hat{T}^{-N}(A_i) = \tilde{U}_{A_i} \times I^\infty$ for $i=1,2$. Since $m$ is mixing for $E$ and $\hat{\mu}(\tilde{U}_{A_i})= \hat{\mu}(\hat{T}^{-N}(\tilde{U}_{A_i}))  = \hat{\mu}(\tilde{U}_{A_i} \times I^\infty) = m (\tilde{U}_{A_i})$, we have:
		\begin{align*}
		\hat{\mu}(\hat{T}^{-k}A_1 \cap A_2) &=
		\hat{\mu}(\hat{T}^{-N-k}(A_1) \cap \hat{T}^{-N}(A_2)) = \hat{\mu}\big( (E^{-k}\tilde{U}_{A_1} \times I^\infty)  \cap (\tilde{U}_{A_2} \times I^\infty ) \big)   \\
		&=m(E^{-k}\tilde{U}_{A_1}\cap \tilde{U}_{A_2})
		\to m(\tilde{U}_{A_1}) m(\tilde{U}_{A_2})
		= \hat{\mu}(A_1) \hat{\mu}(A_2).
		\end{align*}
		
		For measurable subsets $A_1, A_2$, one proves that $\hat{\mu}(\hat{T}^{-k}A_1 \cap A_2)  \rightarrow \hat{\mu}(A_1) \hat{\mu}(A_2)$ from standard arguments approximating of $A_1, A_2$ by finite unions of sets as above.
		
		So the measure $h_*\hat{\mu}$ is invariant and ergodic with respect to $T$.
		Since $(\pi_1)_*h_*\hat{\mu} = m$ we get that $m(\pi_1(B(h_*\hat{\mu})) )=1$, where  $B(h_*\hat{\mu}) $ is the basin of $h_*\hat{\mu}$. Finally, note that if $(x,y)\in B(h_*\hat{\mu})$ then the whole set $\{x\} \times \R^d$ is in $B(h_*\hat{\mu})$ because the vertical fibers are stable manifods for $T$. Then $B(h_*\hat{\mu})$ has full volume in $\torus \times \R^d$, implying that $h_*\hat{\mu}$ is the SRB measure of $T$.
	\end{proof}
	
	\subsection{The Main Inequality}
	
	The transversality between $\ba$ and $\bb$ on $\bc$ gives an estimate on the integral of  $ { \langle T^q_* \mu_{\ba(x)},T^q_* \mu_{\bb(x)}\rangle_r }$ over ${P(\bc)}$.
	
	\begin{proposition}\label{p.tranversality}
		Let $\bc\in I^p$ and $\ba,\bb\in I^q(\bc)$. If $\ba$ and $\bb$ are transversal on
		$\bc$, then there exists a constant $C_1>0$, depending on $q$, such that
		\begin{equation}
		\int_{\mathcal{R}(\bc)}{ \langle T^q_* \mu_{\ba(x)},T^q_* \mu_{\bb(x)}\rangle_r }\,dm(x)\le C_1 {r^{2d}}
		\end{equation}
		for all $r>0$.
	\end{proposition}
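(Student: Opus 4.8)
The strategy is to unwind the definitions, pass to the symbolic picture for the disintegrations of $\mu_T=h_*\hat\mu$, and reduce the estimate to the change‑of‑variables bound of Lemma~\ref{c.submersoes} applied to the difference $x\mapsto S_\bc(x,\ba\bu)-S_\bc(x,\bb\bv)$.

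First I would record how $T^q$ acts along fibres. Since $\sE^q(\ba(x))=x$ and $\sE^{q-i}(\ba(x))=[\ba]_i(x)$, one gets $T^q(\ba(x),y)=(x,\sC^q y+S(x,\ba))$, so $T^q_*\mu_{\ba(x)}$ is the push‑forward of the conditional measure $\mu_{\ba(x)}$ under the affine map $y\mapsto \sC^q y+S(x,\ba)$ onto the fibre over $x$. Writing $\mu_{\ba(x)}=(S(\ba(x),\cdot))_*\hat\mu_{\ba(x)}$, with $\hat\mu_{\ba(x)}$ the conditional probability of $\hat\mu$ over $\ba(x)$, and iterating the relation $S(\sE(x),\pi(x)\ba)=f(x)+\sC(S(x,\ba))$ to obtain the cocycle identity $S(x,\ba\bu)=S(x,\ba)+\sC^q S(\ba(x),\bu)$, one finds
$$
(T^q_*\mu_{\ba(x)})(B(z,r))=\int \mathbbm{1}_{B(z,r)}\big(S(x,\ba\bu)\big)\,d\hat\mu_{\ba(x)}(\bu),
$$
and similarly for $\bb$. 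Inserting these into $\langle\cdot,\cdot\rangle_r$, integrating over $\mathcal{R}(\bc)$, and using Fubini to move that integral inside the two fibre integrals and the integral in $z$, the $z$‑integral becomes $m_d\big(B(S(x,\ba\bu),r)\cap B(S(x,\bb\bv),r)\big)$, which is at most $C_d^{-1}r^d$ and vanishes unless $\|S(x,\ba\bu)-S(x,\bb\bv)\|\le 2r$. Hence
$$
\int_{\mathcal{R}(\bc)}\!\langle T^q_*\mu_{\ba(x)},T^q_*\mu_{\bb(x)}\rangle_r\,dm(x)\le \frac{r^d}{C_d}\int\!\!\int m\big(\{x\in\mathcal{R}(\bc):\|S(x,\ba\bu)-S(x,\bb\bv)\|\le 2r\}\big)\,d\hat\mu_{\ba(x)}(\bu)\,d\hat\mu_{\bb(x)}(\bv).
$$

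It then remains to bound, uniformly in $\bu\in I^\infty(\ba)$ and $\bv\in I^\infty(\bb)$, the quantity $m\big(g^{-1}(B(0,3r))\big)$, where $g:=S_\bc(\cdot,\ba\bu)-S_\bc(\cdot,\bb\bv)$ on $\overline{\mathcal{R}(\bc)}$. This is where transversality enters: the lemma preceding this proposition gives $\fm(Dg(x))>\theta^q\alpha_0$ for every $x\in\overline{\mathcal{R}(\bc)}$, while a term‑by‑term estimate, using that the inverse branches of $\sE^i$ are affine with linear part $\sE^{-i}$ of norm at most $1$, shows $\|S(\cdot,\ba\bu)\|_{C^2},\|S(\cdot,\bb\bv)\|_{C^2}\le\alpha_0$ and hence $\|g\|_{C^2}\le 2\alpha_0$. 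Since $\theta^q\alpha_0/(4\alpha_0)=\theta^q/4$ may be smaller than $\diam(\mathcal{R}(\bc))$, I would first cut $\overline{\mathcal{R}(\bc)}$ into finitely many convex pieces $U_1,\dots,U_{n(q)}$ of diameter at most $\theta^q/4$ — a number $n(q)$ bounded in terms of $\gamma,\theta,q$ only, since $\diam(\mathcal{R}(\bc))<\gamma$ always, which is why the final constant depends only on $q$ — and apply Lemma~\ref{c.submersoes} with $\delta=\theta^q\alpha_0$ on each $U_j$, obtaining $m\big((g|_{U_j})^{-1}(B(0,3r))\big)<C_\delta (3r)^d$. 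Summing over $j$ gives $m\big(g^{-1}(B(0,3r))\big)\le 3^d\,n(q)\,C_\delta\, r^d$, and since $\{x:\|g(x)\|\le 2r\}\subset g^{-1}(B(0,3r))$ and $\hat\mu_{\ba(x)},\hat\mu_{\bb(x)}$ are probabilities, combining with the previous display yields the statement with $C_1=3^d\,n(q)\,C_\delta/C_d$.

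The routine points — the cocycle identity for $S$, the measurability and well‑definedness of $\ba(x)$, of $\hat\mu_{\ba(x)}$ and of the extensions $S_\bc$ for almost every $x\in\mathcal{R}(\bc)$ (using $\ba,\bb\in I^q(\bc)\subset I^q(x)$), the $C^2$ bound, and the convex subdivision — are all straightforward. The one place requiring care, and the conceptual core of the argument, is this reduction to Lemma~\ref{c.submersoes}: the trivial bound on $\langle\cdot,\cdot\rangle_r$ only gives $O(r^d)$, and the whole point is to extract a second factor $r^d$ from $m\big(\{x:\|g(x)\|\le 2r\}\big)$; for this one must invoke the preceding lemma to carry the transversality hypothesis from the finite words $\ba,\bb$ to all infinite extensions $\ba\bu,\bb\bv$, and cut $\mathcal{R}(\bc)$ finely enough that the diameter hypothesis of Lemma~\ref{c.submersoes} holds, so that the possible folding of $g$ is controlled and $g^{-1}$ of a ball of radius $\sim r$ genuinely has measure $O(r^d)$ rather than merely $O(r)$.
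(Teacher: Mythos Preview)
Your proposal is correct and follows essentially the same approach as the paper: unwind the inner product via the symbolic disintegration to get one factor $r^d$ from the $z$‑integral, then use the preceding lemma to propagate transversality to all infinite extensions $\ba\bu,\bb\bv$ and apply Lemma~\ref{c.submersoes} on a finite cover of $\mathcal{R}(\bc)$ by convex sets of diameter at most $\delta/(4\alpha_0)$ to extract the second factor $r^d$. The paper's proof differs only cosmetically (it writes the indicator‑function manipulation out more explicitly and uses a covering by balls rather than a partition into convex pieces, and it works directly with $2r$ rather than passing to $3r$).
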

	
	\begin{proof}
		Denoting the indicator function of the ball $B(0,r)\subset \R^d$ by $\mathbbm{1}_r$, and define $\mathcal{I} :=\int_{\mathcal{R}(\bc)}{ \langle T^q_* \mu_{\ba(x)},T^q_* \mu_{\bb(x)}\rangle_r }\,dm(x)$.  For $x \in \mathcal{R}(\bc)$, we have:
		\begin{align*}
		\langle T^q_*(\mu_{\ba(x)})&,T^q_*(\mu_{\bb(x)})\rangle_r =\int_{\R^d} T^q_*(\mu_{\ba(x)})(B(z,r))  T^q_*(\mu_{\bb(x)})(B(z,r))\,dm_d(z)\\
		&=\int_{\R^d} \int_{\R^d}\int_{\R^d} \mathbbm{1}_r(z_1-z_2)\mathbbm{1}_r(t-z)\,d(T^q_*\mu_{\ba(x)}(z_1)\times T^q_*\mu_{\bb(x)}(z_2) ) \,dm_d(z)\\
		&\le C_d  r^d\int_{\R^d}\int_{\R^d} \mathbbm{1}_{2r}(z_1-z_2)\,d(T^q_*\mu_{\ba(x)}\times T^q_*\mu_{\bb(x)}) (z_1,z_2)
		\end{align*}
		
		A symbolic description for $\mu_x$ is given by $\mu_x=(h_x)_*(\hat{\mu}_x)$, where $h_x(\ba)=S(x,\ba)$ and
		$\hat{\mu}_x([1;a_1,\dots,a_n]) = P_{\pi(x)a_1}P_{a_1a_2}\dots P_{a_{n-1}a_n}$.  In particular, $\mu_x$ is constant in each $\mathcal{R}(i)$.
		Considering $\Psi_\ba: I^{\infty}(\ba(x)) \to \{x\}\times\R^d$ given by $\Psi_\ba(\bu)=(x,S(x,\ba\bu))$, it is valid that $\Psi_\ba(\bu)=(T^{q}\circ h)(\ba(x),\bu)$ for all $\bu \in I^{\infty}(\ba(x))$, and so that $(\Psi_\ba)_*\hat{\mu}_{\ba(x)}=T^{q}_*(h_{\ba(x)})_*\hat{\mu}_{\ba(x)}=T^{q}_*\mu_{\ba(x)}$. Analogously, we have that $(\Psi_\bb)_*\hat{\mu}_{\bb(x)}=T^{q}_*\mu_{\bb(x)}$, where $\Psi_\bb(\bb(x),\bv)=(x,S(x,\bb\bv))$.
		
		Take $(z_1,z_2)=(\Psi_\ba,\Psi_\bb)(\bu,\bv)$
		and denote $I_{\ba,\bb}(x) =  I^\infty(\ba(x))\times I^\infty(\bb(x))$
		, then
		$ \langle T^q_*(\mu_{\ba(x)}),T^q_*(\mu_{\bb(x)})\rangle_r$ is at most
		$$
		C_d r^d\int_{I_{\ba,\bb}(x)}  \mathbbm{1}_{2r}(S_c(x,\ba\bu)-S_c(x,\bb\bv))\,d(\hat{\mu}_{\ba(x)}\times\hat{\mu}_{\bb(x)})(\bu,\bv)
		$$
		
		Integrating on $\mathcal{R}(\bc)$, we have:
		\begin{equation}\label{integral.measure}
		\mathcal{I} \le {C_d}{r^{d}} 
		{\int}_{I_{\ba,\bb}(x)} m(x\in \mathcal{R}(\bc) , \|S(x,\ba\bu)-S(x,\bb\bv)\|<2r)\,d\hat{\mu}_{\ba(x)} d\hat{\mu}_{\bb(x)}(\bu,\bv) 
		\end{equation}

		To finish, it is enough to prove the following claim.
		\begin{claim}\label{eq.local1} There exists a constant $C_2>0$, depending on $q$, such that
			$$  m(x\in \mathcal{R}(\bc):\|S(x,\ba\bu)-S(x,\bb\bv)\|<2r)\le C_2 r^d$$
			for every $\bu$ and $\bv$ such that $\ba\bu\in I^\infty(\bc)$ and $\bb\bv\in I^\infty(\bc)$.
		\end{claim}
		\begin{proof}[Proof of Claim \ref{eq.local1}]
			
			For every $x\in \mathcal{R}(\bc)$, $\ba\in I^{n}(\bc)$, $1\leq n \leq \infty$ and $1\leq j \leq n$ are well defined the maps
			$h_{j,x}^{\ba}:=g_{[a]_{j}(x),[a]_{j-1}(x)} \circ\dots g_{[a]_{1}(x),x}:B(x,\gamma)\to \torus$
			and
			\begin{equation}\label{eq.S_chapeu}
			\widehat{S}(\cdot,\ba(x))=\sum_{j=1}^{n}\sC^{j-1}\circ f\circ h_{j,x}^{\ba}:B(x,\gamma)\to \R^d,
			\end{equation}
			
			Fix some $x_0 \in \mathcal{R}(\bc)$ and define
			$g: B(x_0,\gamma)\supset \mathcal{R}(\bc) \to \R^d$ by
			
			\begin{equation}
			g(x):=\widehat{S}(x,\ba\bu(x_0))-\widehat{S}(x,\bb\bv(x_0)).
			\end{equation}
			
			Since $S(\cdot,\ba\bu)_{|\mathcal{R}(\bc)}$ and $S(\cdot,\bb\bv)_{|\mathcal{R}(\bc)}$ coincide with the restrictions of $\widehat{S}(\cdot,\ba\bu(x_0))$ and $\widehat{S}(\cdot,\bb\bv(x_0))$ 
			to $\mathcal{R}(\bc)$, we have that $\fm(Dg(x)) > \theta^q\alpha_0$ for every $x \in \mathcal{R}(\bc)$. We also have that $\|g\|_{C^{2}} \leq 2\alpha_0$. Defining $\delta=\theta^q\alpha_0$ and considering a covering of $\mathcal{R}(\bc)$ with at most $M=\lfloor (8\alpha_0/\delta)^u\rfloor$ balls centered in points in $\mathcal{R}(\bc)$ with radius at most $\delta/4\alpha_0$, for each of such balls $U$ we apply Lemma \ref{c.submersoes} to $g_{|_{U}}$, obtaining that $m_u((g_{|U})^{-1} B(0,2r) ) < C_\delta r^d$. Consequently, it follows that
			$$m(\{x\in\mathcal{R}(\bc) , \|S(x,\ba\bu)-S(x,\bb\bv) \| < 2r \} ) \leq M C_\delta r^d = C_2 r^d$$
		\end{proof}
		
		Putting together Claim \ref{eq.local1} and (\ref{integral.measure}), we have that
		$$ 
		\mathcal{I}
		\leq {C_d}{r^d} C_2 r^d= C_1 r^{2d}$$
	\end{proof}
	
The main step to prove Theorem \ref{teo.a} is given by the following inequality.
	
	\begin{proposition}[Main Inequality]\label{main.ineq}
		For every integer $1\leq q <\infty$, it is valid
		\begin{equation}
		|||\mu|||_r^2  \leq \frac{\tau(q)}{J^{q}} |||\mu|||_{\underline{\lambda}^{-q}r}^2 + C_1
		\end{equation}
		for all $r>0$.
	\end{proposition}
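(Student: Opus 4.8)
The plan is to iterate the invariance $\mu=T^{q}_{*}\mu$ (write $\mu:=\mu_{T}$) through the $N^{q}$ inverse branches of $\sE^{q}$. Using the symbolic description of the previous subsection, for $m$-almost every $x$ the conditional measure decomposes as $\mu_{x}=N^{-q}\sum_{\ba\in I^{q}(x)}T^{q}_{*}\mu_{\ba(x)}$, the weight of each branch being the reciprocal $N^{-q}$ of the base Jacobian $|\det E^{q}|$, while $T^{q}$ restricted to the fibre $\{\ba(x)\}\times\R^{d}$ is the affine contraction $y\mapsto\sC^{q}y+S(x,\ba)$, whose linear part $\sC^{q}$ is the same for every branch. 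Since $\langle\cdot,\cdot\rangle_{r}$ is bilinear, $\|\mu_{x}\|_{r}^{2}=N^{-2q}\sum_{\ba,\bb\in I^{q}(x)}\langle T^{q}_{*}\mu_{\ba(x)},T^{q}_{*}\mu_{\bb(x)}\rangle_{r}$. Now fix $p\ge 1$ realizing the minimum in the definition of $\tau(q)$ and split $\torus$, modulo an $m$-null set, into the rectangles $\mathcal R(\bc)$, $\bc\in I^{p}$; on $\mathcal R(\bc)$ one has $I^{q}(x)=I^{q}(\bc)$, so the double sum ranges over $I^{q}(\bc)\times I^{q}(\bc)$, and I split it into the pairs transversal on $\bc$ and the remaining ones (which include the diagonal $\ba=\bb$).

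For a transversal pair, Proposition~\ref{p.tranversality} yields $\int_{\mathcal R(\bc)}\langle T^{q}_{*}\mu_{\ba(x)},T^{q}_{*}\mu_{\bb(x)}\rangle_{r}\,dm(x)\le C_{1}(q)\,r^{2d}$; there are at most $N^{2q}$ such pairs in each of the finitely many rectangles and they carry the weight $N^{-2q}$, so after dividing by $r^{2d}$ this whole contribution to $|||\mu|||_{r}^{2}$ is at most a constant depending on $q$ --- this is the term $C_{1}$. For the remaining pairs I apply $\langle\nu_{1},\nu_{2}\rangle_{r}\le\tfrac12(\|\nu_{1}\|_{r}^{2}+\|\nu_{2}\|_{r}^{2})$ and the defining property of $\tau(q)$: for the chosen $p$, every $\ba\in I^{q}(\bc)$ fails to be transversal to at most $\tau(q)$ words $\bb\in I^{q}(\bc)$. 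Pairing the two terms, the non-transversal part of $\|\mu_{x}\|_{r}^{2}$ on $\mathcal R(\bc)$ is at most $\tau(q)N^{-2q}\sum_{\ba\in I^{q}(\bc)}\|T^{q}_{*}\mu_{\ba(x)}\|_{r}^{2}$.

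The last ingredient is a push-forward estimate $\|T^{q}_{*}\mu_{\ba(x)}\|_{r}^{2}\lesssim|\det\sC|^{-q}\,\|\mu_{\ba(x)}\|_{\underline{\lambda}^{-q}r}^{2}$: the preimage of a ball $B(z,r)$ under the affine fibre map is the ellipsoid $\sC^{-q}B(z-S(x,\ba),r)$, which --- because $\sC^{-q}$ expands --- contains a ball of radius $r$ and is contained in a ball of radius $\|\sC^{-q}\|r\le\underline{\lambda}^{-q}r$; covering this (fat) ellipsoid by a number of balls of radius $\le\underline{\lambda}^{-q}r$ comparable to the volume ratio $m_{d}(\text{ellipsoid})/r^{d}=|\det\sC|^{-q}$, applying Cauchy--Schwarz over the cover, and extracting the Jacobian $|\det\sC|^{q}$ from the substitution $z\mapsto\sC^{-q}(z-S(x,\ba))$ in the defining integral of $\|\cdot\|_{r}$ yields the bound. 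Next I change variables $x\mapsto\ba(x)=(\sE^{q}|_{\mathcal R(\ba)})^{-1}(x)$ on each $\mathcal R(\bc)$, whose Jacobian on $\torus$ is $N^{-q}$, and use that as $(\bc,\ba)$ runs over $\{\bc\in I^{p}\}\times\{\ba\in I^{q}(\bc)\}$ the sets $(\sE^{q}|_{\mathcal R(\ba)})^{-1}(\mathcal R(\bc))$ tile $\torus$; hence
$$\sum_{\bc\in I^{p}}\int_{\mathcal R(\bc)}\sum_{\ba\in I^{q}(\bc)}\|\mu_{\ba(x)}\|_{\underline{\lambda}^{-q}r}^{2}\,dm(x)\;=\;N^{q}\int_{\torus}\|\mu_{x'}\|_{\underline{\lambda}^{-q}r}^{2}\,dm(x')\;=\;N^{q}(\underline{\lambda}^{-q}r)^{2d}\,|||\mu|||_{\underline{\lambda}^{-q}r}^{2}\,.$$
Summing the transversal and non-transversal contributions and dividing by $r^{2d}$ gives an inequality $|||\mu|||_{r}^{2}\le(\text{coeff})\,|||\mu|||_{\underline{\lambda}^{-q}r}^{2}+C_{1}$; it remains to verify that the coefficient --- assembled from $N^{-2q}$, the combinatorial factor $\tau(q)$, the determinant gain $|\det\sC|^{-q}$, the branch Jacobian $N^{q}$ and the rescaling of the radius --- reduces to exactly $\tau(q)/J^{q}$, where $J=N|\det\sC|$.

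The step I expect to be the main obstacle is precisely this push-forward estimate together with the final bookkeeping: the exponents in which $|\det\sC|$ enters (fibre contraction, branch Jacobian, and the $\underline{\lambda}$ hidden in the dilated radius) must cancel so as to leave exactly $J^{-q}$, which forbids the crude bound ``ellipsoid $\subseteq$ a single enclosing ball'' (that would cost a factor $|\det\sC|^{q}$ rather than save $|\det\sC|^{-q}$) and forces one to exploit that $\sC^{-q}B(0,r)$ is a fat ellipsoid and to cover it efficiently by small balls. Everything else --- the disintegration identity, the local constancy of $I^{q}(\cdot)$ on Markov rectangles, the tiling used in the change of variables, and the invocation of Proposition~\ref{p.tranversality} --- is routine.
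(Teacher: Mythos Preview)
Your global strategy coincides with the paper's: fix $p$ realizing the minimum in $\tau(q)$, disintegrate $\mu_x=N^{-q}\sum_{\ba\in I^q(\bc)}T^q_*\mu_{\ba(x)}$ on each rectangle $\mathcal R(\bc)$, split the double sum into transversal pairs (handled by Proposition~\ref{p.tranversality}, giving the additive constant $C_1$) and non-transversal ones (Cauchy--Schwarz plus the count $\tau(q)$, a fibrewise push-forward bound, then the base change of variables). The divergence from the paper is precisely at the push-forward bound, and there your determinant exponent points the wrong way.

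Carry out the substitution you yourself propose, $w=\sC^{-q}(z-S(x,\ba))$, in the defining integral of $\|T^q_*\nu\|_r^2$: it contributes $dm_d(z)=|\det \sC|^{q}\,dm_d(w)$ and leaves $|\det \sC|^{q}\!\int |\nu(w+\sC^{-q}B(0,r))|^{2}\,dm_d(w)$. The ellipsoid $\sC^{-q}B(0,r)$ is contained in the single ball $B(0,\underline{\lambda}^{-q}r)$, so
\[
\|T^{q}_{*}\nu\|_{r}^{2}\ \le\ |\det \sC|^{q}\,\|\nu\|_{\underline{\lambda}^{-q}r}^{2}.
\]
Since $\sC$ is a contraction, $|\det \sC|^{q}<1$: this \emph{is} the gain, and it is exactly what the paper uses. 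Your dismissal of ``ellipsoid $\subseteq$ a single enclosing ball'' as something that would ``cost a factor $|\det \sC|^{q}$'' has the inequality backwards --- that factor helps, it does not hurt. The proposed covering of the ellipsoid by $\sim|\det \sC|^{-q}$ balls of radius up to $\underline{\lambda}^{-q}r$ is both unnecessary and ill-posed (one ball of that radius already covers, while a cover by radius-$r$ balls followed by Cauchy--Schwarz only \emph{loses} a further factor $|\det \sC|^{-q}$). Your stated bound $\|T^q_*\nu\|_r^2\lesssim|\det \sC|^{-q}\|\nu\|_{\underline{\lambda}^{-q}r}^2$ is formally true (it is weaker than the display above, since $|\det \sC|^{-q}>|\det \sC|^{q}$), but it is too weak for the bookkeeping you announce: plugging it into your own displayed identity yields a coefficient
\[
\tau(q)\,N^{-2q}\cdot|\det \sC|^{-q}\cdot N^{q}\cdot\underline{\lambda}^{-2dq}
\;=\;\tau(q)\,J^{-q}\,\underline{\lambda}^{-2dq},
\]
which overshoots $\tau(q)/J^{q}$ by the spurious factor $\underline{\lambda}^{-2dq}>1$. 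With the correct factor $|\det \sC|^{q}$ the argument goes through exactly as in the paper.
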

	
	\begin{proof}
		Given $q$, fix
		an integer $p_0$ large enough such that
		$$\tau(q)= \underset{\bc\in I^{p_0}}{\max} \underset{\ba\in I^{q}(\bc)}{\max} \# \{ \bb\in I^{q}(\bc) | (\ba,\bb) \in \Gamma(q,\bc) \}.$$
		
		Define $\Gamma(q)$ the set of triples $(\ba,\bb,\bc)$ with $\bc \in I^{p_0}$ and $(\ba,\bb) \in I^{q}(\bc)\times I^{q}(\bc)$ such that $\ba$ and $\bb$ are not transversal on $\bc$. The invariance of $\mu$ with respect to $T$ and the uniqueness of the disintegration gives the relation
		$$
		\mu_x=N^{-q}\sum_{\ba\in I^{q}(\bc)}T^q_* \mu_{\ba(x)} 
		$$
		
		Therefore we may decompose $|||\mu|||^2_r $ into the following sum:
		
		\begin{align*}
		|||\mu|||^2_r  &= { N^{-2q} \sum_{\bc\in I^{p_0}} \sum_{(\ba,\bb)\in I^q(\bc) \times I^q(\bc)}
			\int_{\mathcal{R}(\bc)} \frac{ \langle T^q_* \mu_{\ba(x)},T^q_* \mu_{\bb(x)}\rangle_r }{r^{2d}}\,dm(x)}\\
		&= N^{-2q} \sum_{(\ba,\bb,\bc)\in \Gamma(q)}\int_{\mathcal{R}(\bc)} \frac{ \langle T^q_* \mu_{\ba(x)},T^q_* \mu_{\bb(x)}\rangle_r }{r^{2d}}\,dm(x) \\
		&\quad \quad\quad \quad\quad \quad +  N^{-2q} \sum_{(\ba,\bb,\bc)\notin \Gamma(q)}\int_{\mathcal{R}(\bc)} \frac{ \langle T^q_* \mu_{\ba(x)},T^q_* \mu_{\bb(x)}\rangle_r }{r^{2d}}\,dm(x)
		\end{align*}
		
		Let us denote by $\mathcal{S}_1$ the first sum above and $\mathcal{S}_2$ the second sum.
		The transversality condition is used to bound $\mathcal{S}_2$ from above: Proposition~\ref{p.tranversality} implies that  $\int_{\mathcal{R}(\bc)}{ \langle T^q_* \mu_{\ba(x)},T^q_* \mu_{\bb(x)}\rangle_r }\,dm(x)\le C_1 {r^{2d}}$, then it follows that
		$$\mathcal{S}_2  \leq N^{2q}  N^{-2q}C_1 = C_1.$$
		
		In order to bound from above the first sum, we consider $H = |\det \sC|$ and that note that the restriction of $T^{q}$ to each vertical fiber is an affine contraction whose smaller contraction rate is $\underline{\lambda}=\fm(\sC)$, which implies that$$\|T^{q}_*(\mu_{\ba(x)})\|^{2} _{r} \le H^q \|\mu_{\ba(x)}\|^{2}_{\underline{\lambda}^{-q}r}.$$ 
		
		For $\ba, \bb \in I^q(\bc)$, it holds:
		\begin{align*}
		\langle T^{q}_* \mu_{\ba(x)}, T^{q}_* \mu_{\bb(x)} \rangle_{r} \leq
		\|T^{q}_* \mu_{\ba(x)} \|_{r} \|T^{q}_* \mu_{\bb(x)} \|_{r}
		\leq    H^q \frac{ \|\mu_{\ba(x)}\|_{\underline{\lambda}^{-q}r}^{2} +   \|\mu_{\bb(x)}\|_{\underline{\lambda}^{-q}r}^{2}  }{2}
		\end{align*}
		
		Making a change of variables, we have
		\begin{align*}
		\sum_{\ba\in \overline{I}^{q} } \int_{\torus} \frac{\|\mu_{\ba(x)}\|_{\underline{\lambda}^{-q}r}^{2}}{r^{2d}} dm(x) = 
		\sum_{\ba\in \overline{I}^{q} } \int_{\mathcal{R}(\ba)} N^{q} \frac{\|\mu_{y}\|_{\underline{\lambda}^{-q}r}^{2}}{r^{2d}} dy =
		N^q \underline{\lambda}^{2dq} |||\mu|||_{\underline{\lambda}^{-q}r}^2
		\end{align*}
		
		Then:
		\begin{align*}
		\mathcal{S}_1   
\leq r^{-2d} N^{-2q}  H^q \tau(q)  \sum_{\ba\in \overline{I}^{q} } \int_{\torus} \|\mu_{\ba(x)}\|_{\underline{\lambda}^{-q}r}^{2} dm(x)
	\leq \frac{\tau(q)}{J^q} |||\mu|||_{\underline{\lambda}^{-q}r}^2
		\end{align*}
		
		And it follows what we want.
	\end{proof}
	
	\subsection{Proof of Theorems \ref{teo.a} and \ref{teo.d}:}
	
	Finally, we are able to prove Theorems \ref{teo.a} and \ref{teo.d}.
	
\begin{proof}[Proof of Theorem \ref{teo.d}]
		By the transversality condition of $T$, there exist an integer $q_0$ such that $\tau(q_0) < {J^{q_0}}$. Consider $\rho= \frac{\tau(q_0)}{J^{q_0}} < 1 $ and take $C_1$ as in Proposition \ref{main.ineq} for this $q_0$. For $r>0$, we have
		$$|||\mu|||_r^2  \leq \rho^{q_0} |||\mu|||^2_{\underline{\lambda}^{-q_0}r} + C_1$$
		Fixing some $r_0>0$, it follows that
		\begin{align*}
		|||\mu|||^2_{\underline{\lambda}^{nq_0} r_0} &\leq \rho |||\mu|||^2_{\underline{\lambda}^{(n-1)q_0}r_0} + C_1 \\
		&\leq \rho^{2} |||\mu|||^2_{\underline{\lambda}^{(n-2)q_0}r_0} + C_1 (\rho  + 1) \leq  \dots  \\
		&\leq \rho^{n} |||\mu|||^2_{r_0}  + C_1 (\rho^{n-1} + \cdots + \rho  + 1)  \leq |||\mu|||^2_{r_0} +  \frac{C_1}{1-\rho}
		\end{align*}
		
		So we have $\liminf_{r\rightarrow 0^+} |||\mu|||_r  < \infty$, thus Lemma 3.1 implies that the SRB measure is absolutely continuous and its density is in $L^2(\torus\times\R^d)$.
		
		The openness follows from an argument of continuity. The transversality of a pair of functions $S_\bc(\cdot,\ba)$ and $S_\bc(\cdot,\bb)$ is open in $(\sC,f)$ because  $S_\bc(\cdot,\ba)$ is continuous on $\ba$, on $\sC$ and on $f\in C^{2}(\torus,\R)$. This implies that, for fixed $q$, $\tau(q)$ is upper semi-continuous and that the condition $\tau(q)<|\det \sC \det \sE|^{q}$ is open on $f$.
	\end{proof}
	
	To prove Proposition \ref{non.empty} we will use Proposition~\ref{zero.measure}, which will be proved in the Section 4.
	
	\begin{proof}[Proof of Proposition \ref{non.empty}]
		Consider the maps $\sE_i:\R^{u_i}\to\R^{u_i}$ by $\sE_i(x_i) = \mu_i x_i$ and  $\sC_i:\R\to\R$ by $\sC_i(y)=\lambda_i y$, for $i=1,\cdots, d$. Note that $\sC_i \in C(1,E_i)$, for every $i=1,\cdots,d$, then Proposition~\ref{zero.measure} implies that there exists $f_i \in  C^2(\torus,\R^d)$ such that the map $T_i:\T^{u_i}\times\R \to \T^{u_i}\times\R$ given by $T_i(x,y)=(\sE_ix,\lambda_iy+f_i(x))$
		satisfies  ${\limsup}_{q\to\infty} \frac{\log \tau(q)}{q}<\lambda_i \mu_i^{u_i}=: J_i$. This condition on the $\limsup$ implies that there exists an integer $q_i$ such that $\tau(q)<J_i^q$ for every $q\geq q_i$.
		
		Now, we consider $f\in C^2(\torus,\R^d)$ by $f(x_1,\cdots,x_d)=(f_1(x_1),\cdots, f_d(x_d))$, $x_i\in \mathbb{T}^{u_i}$. The dynamic $T=T(E,C,f)$ has the form
		$$
		T(x_1,\dots,x_d,y_1,\dots,y_d)=(\mu_1x_1,\dots,\mu_dx_d,\lambda_1y_1+f_1(x_1),\dots,\lambda_dy_d+f_d(x_d))
		$$
		where $x_i\in\T^{u_i}$ and $y_i\in\R$.
		
		We claim that $T$ satisfies the transversality condition. To verify it, we consider the product alphabet $I_T=I_{T_1}\times \dots \times I_{T_d}$,
		the product partition $\mathcal{R}_{T}^n=\mathcal{R}_{T_1}^n\times\dots\times \mathcal{R}_{T_d}^n$ and notice that if $\ba=(\ba_1,\dots,\ba_d)$ is in $I^n=I_T^{n}$ then  $S(\cdot;\ba)$ is the product $S(x_1,\dots,x_d;\ba_1,\dots,\ba_d)=(S(x_1;\ba_1),\dots,S(x_d;\ba_d))$, with $\ba_i\in I_{T_i}^n$. For $q \geq \max_i{q_i}$, we have $\tau(q) \leq J_1^q \cdots J_u^q = (|\det \sC  \det \sE|)^q$, what means that $T$ satisfies the transversality condition.
	\end{proof}
	
	\begin{proof}[Proof of Theorem A]
		Consider the set $\mathcal{U}$ formed by the pairs $(\sC,f) \in C(d)\times C^2(\torus,\R^d)$ such that the dynamic $T=T(\sE,\sC,f)$ satisfies the transversality condition.    Proposition \ref{non.empty} implies that $\mathcal{U}$ is nonempty. Theorem \ref{teo.d} implies that $\mathcal{U}$ is open and the absolute continuity of $\mu_T$ for every $(\sC,f)\in \mathcal{U}$.
	\end{proof}
	
	\section{Genericity of the Transversality Condition}
	
	The construction of generic families in Proposition \ref{generic.families} can be reduced to a local version of itself.
	
	\begin{proposition}\label{local.version}[Local version of Proposition \ref{generic.families}]
		Given $u\geq d$, $\sE\in E(u)$, $\sC\in C(d;E)$, for every point $x\in\torus$
		there exists an integers $n_0$ such that
		for every $n \geq n_0$
		there exists a neighborhood $U_x$ of $x$ and functions $\phi_k \in C^{\infty}(\torus,\R^d)$, $1\leq k \leq s$, such that  for every $f_0 \in C^2(\torus, \R^d)$  the corresponding family $T_{\bt}^{n}$ is $n$-generic on $U_x$.
	\end{proposition}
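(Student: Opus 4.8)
The strategy is to produce the perturbations $\phi_1,\dots,\phi_s$ explicitly near a fixed point $x$ so that the map $\bt\mapsto S(x,\ba;\bt)$ depends affinely on the parameters in a way we can control jet by jet. First I would use the expanding map to linearize: around $x$ all the inverse branches $[\ba]_i(x)$ for distinct $[\ba]_n$ are pairwise separated by a definite amount once $n$ is fixed (they lie in distinct atoms of $\mathcal{R}^n$), and the branches contract at rate at most $\underline\mu^{-1}$. So I would choose the $\phi_k$ to be bump-type functions supported on disjoint neighborhoods of the finitely many atoms $\mathcal{R}(i)$ of the original partition $\mathcal{R}$, chosen so that the linear parts $D\phi_k$ at the relevant scales span (a large enough piece of) the space of linear maps $\R^u\to\R^d$. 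Because $C\in C(d;\sE)$, the key inequality $\|\sC\|<\underline\mu\,/\,N^{1/(u-d+1)}$ ensures the contribution of the $i$-th term $\sC^{i-1}Df([\ba]_i(x))E^{-i}$ to $DS(x,\ba;\bt)$ decays geometrically faster than the number $N^i$ of competing branches grows; this is precisely the arithmetic that lets a bounded number $s$ of perturbations control the differences $DS(x,\ba_i;\bt)-DS(x,\ba_0;\bt)$ for exponentially many $\ba_i$.

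Next I would set up the linear-algebra core. Fix $x$, fix $n\ge n_0$ (with $n_0$ to be determined by the spanning condition above), and take any $D\ge n^3$ words $\ba_0,\dots,\ba_D$ with distinct length-$n$ truncations. The map $\fpsi_{x,\sigma}$ is affine in $\bt$, so $\Jac\fpsi_{x,\sigma}(\bt)$ is independent of $\bt$ and equals the Jacobian of its linear part. The linear part, restricted to the coordinates coming from $\phi_k$ supported near atom $\mathcal{R}(j)$, acts on the word $\ba_i$ through the terms $[\ba_i]_\ell$ with $a_{i,\ell}=j$; since the $[\ba_i]_n$ are distinct, a counting/pigeonhole argument lets me extract a subsequence $\bb_0=\ba_0,\bb_1,\dots,\bb_\kappa$ with $\kappa=\lfloor D/2n\rfloor$ along which the ``leading'' distinguishing symbol occurs at controlled positions, so that the corresponding block of $D\fpsi$ is block-triangular with diagonal blocks that are (perturbations of) the span-generating $D\phi_k$ matrices. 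Then I estimate $|\det|$ on the appropriate $\kappa$-dimensional subspace $L$ from below: the diagonal contribution is bounded below by a fixed constant (by the spanning choice of $\phi_k$), and the off-diagonal/tail contributions are bounded above by $\sum_{i>\text{position}}(\|\sC\|\underline\mu^{-1})^{\,i}N^{\,i/(u-d+1)}$-type series, which is $<\tfrac12$ of the diagonal once $n_0$ is large, thanks exactly to the $C(d;\sE)$ condition. This gives $\Jac(\fpsi_{x,\sigma})(\bt)>\tfrac12$.

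Finally I would assemble: the above produces, for each $x$, an $n_0=n_0(x)$, a neighborhood $U_x$ (on which all the branch-separation and decay estimates used are uniform — they only involve the Markov partition and the linear data $\sE,\sC$, so $U_x$ can be taken as a union of atoms of a sufficiently fine refinement), and the finite family $\phi_k$. The genericity claim for all $x\in U_x$ follows because the estimates were uniform in $x\in U_x$ and in the choice of the $\ba_i$'s. I expect the main obstacle to be the combinatorial extraction of the subsequence $\bb_j$ that makes $D\fpsi_{x,\sigma}$ suitably triangular: one must show that from $D\ge n^3$ words with distinct length-$n$ prefixes one can always pull out $\kappa=\lfloor D/2n\rfloor$ of them on which the perturbation-sensitive coordinates behave independently enough to keep the Jacobian bounded below — balancing the loss factor $1/(2n)$ in $\kappa$ against the number of symbols and the $r$-many perturbation directions. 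This is where the hypothesis $u\ge d$ and the precise exponent $\tfrac1{u-d+1}$ in $C(d;\sE)$ enter, and it is the heart of the argument; the rest is bookkeeping with geometric series and the change-of-variables/derivative formulas for $S$ already recorded in Section 2.
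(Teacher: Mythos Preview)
Your proposal has a genuine dimensional gap. You place the bump functions on the atoms $\mathcal{R}(1),\dots,\mathcal{R}(r)$ of the \emph{original} Markov partition, which gives at most $s=r\cdot ud$ parameter directions. But $\fpsi_{x,\sigma}$ maps $\R^s$ into $(\R^{ud})^{\kappa}$, and the Jacobian condition $\Jac(\fpsi_{x,\sigma})>\tfrac12$ is the supremum over $(\kappa ud)$-dimensional subspaces of $\R^s$ of the determinant of the restriction; this forces $s\ge \kappa\,ud$. Since $\kappa=\lfloor D/2n\rfloor$ with $D\ge n^3$, already $\kappa\ge n^2/2-1$, and in fact $\kappa$ can be as large as $N^n/(2n)$ because there are $\sim N^n$ words with distinct length-$n$ truncations. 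A fixed, $n$-independent family of bumps on the coarse atoms therefore cannot possibly make the Jacobian positive, let alone bounded below, for all admissible $\sigma$. The series estimate you invoke, $\sum_i(\|\sC\|\underline\mu^{-1})^i N^{i/(u-d+1)}$, does not rescue this: it bounds sizes of entries, not the rank of $D\fpsi$.

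The paper's construction is different in exactly this respect. It takes one family $\phi^{\ba}_{i',j'}$ for each $\ba\in I^n(x)$, supported in a tiny ball around the $n$-th preimage $\ba(x)$, with $D\phi^{\ba}_{i',j'}=\sC^{-n+1}E_{i',j'}\sE^n$ on a smaller ball; thus $s\approx N^n\cdot ud$, which is large enough. With this choice the $n$-th term of the series for $DS(x,\ba;\bt)$ picks up exactly $E_{i',j'}$ from $\phi^{\ba}_{i',j'}$, and only tail terms $i>n+\nu$ can interfere once a buffer $\nu$ is fixed; the matrix $D\fpsi_{y,\sigma}|_{W_0}$ becomes identity plus an error of size $O(\theta^{\nu})$, giving Jacobian $>\tfrac12$. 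Two further points: the combinatorial extraction is not a ``leading distinguishing symbol'' argument but uses that each $\bb$ has at most $n+1$ words $\ba$ with $\ba(x)$ on the forward orbit of $\bb(x)$, which is what produces the $1/(2n)$ loss in $\kappa$; and the $C(d;\sE)$ inequality $\|\sC\|<\underline\mu\,N^{-1/(u-d+1)}$ is \emph{not} used in this proposition at all---only $\theta=\overline\lambda\,\underline\mu^{-1}<1$ is needed here. The $C(d;\sE)$ condition enters later, in the measure estimate of Proposition~\ref{zero.measure}.
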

	
	Assuming Proposition \ref{local.version} we can prove Proposition \ref{generic.families}.
	
	\begin{proof}[Proof of Proposition \ref{generic.families}]
		Consider $n_0$ as given by Proposition 6, for every $n \geq n_0$ there exists a finite set of points $x_{k}$, $1\leq k \leq m$, such that the neighborhoods $U_{x_{k}}$ given by Proposition \ref{local.version} cover $\torus$. Associated to each $x_k$ and $U_{x_k}$ we have the $C^{\infty}$ functions
		$\phi^k_{1},\cdots,\phi^k_{m(k)}$.
		
		Denoting by $\{ \phi_i \}_{i=1,\cdots, s}$ the union of these functions and $t_i$ the parameter corresponding to the function $\phi_i$. For every $x\in\torus$, take $k_0$ such that $x \in U_{x_{k_0}}$, the Jacobian of $\fpsi_{x,\sigma}(\bt)$ is greater than the Jacobian of $\fpsi_{x,\sigma}(\bt)$ restricted to the subspace $W$ generated by $\{ t^{k_0}_i\}_{i=1,\cdots,m(k_0)}$, which satisfies $\Jac\big((\fpsi_{x,\sigma})(\bt)|_{W}\big)>\frac{1}{2}$.
	\end{proof}
	
	\subsection{Construction of generic families}
	
	\begin{proof}[Proof of Proposition \ref{local.version}]
		
		Given $x\in\torus$ and $n\in\N$, we will consider a family of functions $\phi_k^\ba$, $1\leq k \leq ud$, $\ba \in I^n(x)$, that are supported on a neighborhood of the pre-image $\ba(x)$. Deformations of $f$ along $\phi_k^{\ba}$ will be few relevant in the expression of $S(x,\bb;\bt)$ for $\bb\in I^\infty(x)$ if $[\bb]_i(x)$ is far from $\ba(x)$ for small values of $i$.
		
		For $\bb\in I^n(x)$, define $\mathcal{E}(\bb,x)=\{ \ba \in I^n(x)$ , $\sE^{i}(\bb(x))=\ba(x)$ for some $i\geq 0\}$. The points $\ba(x)$, for $\ba\in \mathcal{E}(\bb,x)$, are in the forward orbit of $\bb(x)$.
		
		Given $u, d, \sE$ and $\sC$, let us fix once for all a large integer $\nu$ that satisfies
		\begin{equation*}
		\big( 1 - 4 du^2 N^n \overline{\lambda}^{\nu}\underline{\mu}^{-\nu}(1-\overline{\lambda}\underline{\mu}^{-1})^{-1} \big)^{N^n ud} > \frac{1}{2}.
		\end{equation*}
		
		The choice of $\nu$ is used in (\ref{nu}). For this value of $\nu$, let us consider $\epsilon_0=\epsilon_0(\nu)>0$ small such that $\epsilon_0 < \gamma$ and
		$$\sE^{i}(B(\bb(x), \epsilon_{0}))\cap B(\ba(x), \epsilon_{0})\neq \emptyset \text{ for some } 0\leq i \leq n+\nu \text{ only if }\ba \in \mathcal{E}(\bb,x).$$
		
		Denoting by $E_{i',j'}$ the matrix that has entry $1$ in the intersection of the $i$-th line row the $j$-th column and has all the other entries equal to $0$. For each $\ba\in I^{n}(x)$, $1\leq i' \leq u$, $1 \leq j' \leq d$, we consider a $C^\infty$ function $\phi^{\ba}_{i',j'}:\torus\rightarrow \R^d$  such that:
		\begin{itemize}
			\item $\phi^{\ba}_{i',j'}$ is supported in $B( \ba(x), \overline{\mu}^{-n} \epsilon_0)$;
			\item $D \phi^{\ba}_{i',j'} (y) = C^{-n+1}  E_{i',j'}  E^{n}$ for every $y \in B( \ba(x), \overline{\mu}^{-n} \epsilon_0/3)$;
			\item $\|D \phi^{\ba}_{i',j'} (y)\| < 2 \underline{\lambda}^{-n+1} \overline{\mu}^{n} $ for every $y\in B( \ba(x), \overline{\mu}^{-n} \epsilon_0)$.
		\end{itemize}
		
		
Define $U_x:= B(x,\epsilon_0/3)$. For every $y\in U_x$, there exists a bijection $\Phi_{x,y}:I^\infty(x)\to I^\infty(y)$ given by $\Phi_{x,y}(\ba)=\hat{\ba}=(\hat{a}_j)_{j=1}^\infty$ such that $\hat{a}_j = \pi ( h_{j,x}^{\ba}(y))$, where $h_{j,x}^{\ba}$ is the inverse branch of $E^j$ defined in (\ref{eq.S_chapeu}). Note that $[\ba]_i(y)$ is close to $[\Phi_{x,y}^{-1}(\ba)]_i(x)$ for every $\ba\in I^\infty(x)$ and every $i\geq0$. If $\pi(x)=\pi(y)$, then $\Phi_{x,y}$ is simply the identity.
		
Let $n_0$ be sufficiently large such that
$ \lfloor \frac{n_0^3}{2n_0} \rfloor < \lfloor \frac{n_0^3}{n_0+1} \rfloor -n_0 -\nu -2 $, 
which implies that $\kappa= \lfloor \frac{D}{2n} \rfloor < \lfloor \frac{D}{n+1} \rfloor - n -\nu -2$ for every $n\geq n_0$
, since $D>n^3$.
		
		\begin{claim}\label{subsequence}
			Given a sequence $\sigma=(\ba_{0},\ba_{1},\cdots,\ba_{D})$ in $ I^{\infty}(x)$ with $[\ba_{j}]_{n}$ distinct, there exists a subsequence $\hat{\sigma}=(\bb_0,\bb_1,\cdots,\bb_\kappa)$ in $I^\infty(x)$,  with $\bb_0=\ba_0$, such that $$\phi^{[\bb_{l'}]_n}_{i',j'}([\Phi_{x,y}(\bb_l)]_i(y)) = 0$$
			for every $y\in B(x,\epsilon_0)$, $1\leq l' \leq \kappa$, $0 \leq l \leq \kappa$, $l\neq l'$ and $i=0,1,\cdots,n+\nu$.
		\end{claim}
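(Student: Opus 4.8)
The plan is to reduce the claim to a counting statement about pre-images of $x$. For $\bv\in I^\infty(x)$ write $O(\bv):=\{[\bv]_i(x):0\le i\le n+\nu\}\subset\torus$; this is a set of at most $n+\nu+1$ points, each a pre-image of $x$ of level at most $n+\nu$. The first step is the following conversion: for $1\le l'\le\kappa$ and $0\le l\le\kappa$ with $l\neq l'$, if $\phi^{[\bb_{l'}]_n}_{i',j'}\big([\Phi_{x,y}(\bb_l)]_i(y)\big)\neq 0$ for some $i',j'$, some $0\le i\le n+\nu$ and some $y\in B(x,\epsilon_0)$, then $[\bb_{l'}]_n(x)\in O(\bb_l)$. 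To see this, recall $[\Phi_{x,y}(\bb_l)]_i(y)=h_{i,x}^{\bb_l}(y)$; since $h_{i,x}^{\bb_l}$ is affine with linear part $E^{-i}$, of norm $\le\underline{\mu}^{-i}\le 1$, the point $[\Phi_{x,y}(\bb_l)]_i(y)$ lies within $\underline{\mu}^{-i}\epsilon_0\le\epsilon_0$ of $[\bb_l]_i(x)$, while $\phi^{[\bb_{l'}]_n}_{i',j'}$ is supported in $B\big([\bb_{l'}]_n(x),\overline{\mu}^{-n}\epsilon_0\big)\subset B\big([\bb_{l'}]_n(x),\epsilon_0\big)$. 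Hence non-vanishing forces $d\big([\bb_{l'}]_n(x),[\bb_l]_i(x)\big)<2\epsilon_0$. As $n$ is fixed, $\bigcup_{k=0}^{n+\nu}\sE^{-k}(x)$ is finite, so one may take $\epsilon_0$ small enough that, besides the property already required of it, distinct elements of this set are more than $2\epsilon_0$ apart; then $[\bb_{l'}]_n(x)=[\bb_l]_i(x)\in O(\bb_l)$. (Equivalently, this says $[\bb_{l'}]_n(x)=\sE^{n-i}([\bb_l]_n(x))$ when $i\le n$ and $\sE^{i-n}([\bb_{l'}]_n(x))=[\bb_l]_n(x)$ when $i>n$, which is also what the defining property of $\epsilon_0$ yields after applying the relevant power of $\sE$ to the ball around the level-$n$ pre-image, since the members of $\mathcal{E}(\cdot,x)$ are exactly such short forward iterates.)

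Next I would use the structure of the distinct points $p_m:=[\bb_m]_n(x)\in\sE^{-n}(x)$. If $x$ is not $\sE$-periodic, $O(\bv)$ meets $\sE^{-n}(x)$ only in $[\bv]_n(x)$, so by the conversion a non-vanishing can occur only when $p_{l'}=p_l$, i.e. $l'=l$; thus there are no constraints and any subsequence of length $\kappa+1$ starting with $\ba_0$ works. If $x$ has period $\ell$, put $\sigma=\sE^{\ell}$: it maps $\sE^{-n}(x)$ into itself, fixes the point of $\sE^{-n}(x)$ lying on the orbit of $x$, and every $\sigma$-orbit reaches that point in at most $\lceil n/\ell\rceil\le n$ steps, so $(\sE^{-n}(x),\sigma)$ is a rooted forest of depth $\le n$; write $\preceq$ for the ancestor order. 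A short check — the level-$n$ forward iterates of a level-$n$ pre-image are its $\sigma$-ancestors, and a level-$n$ pre-image of the form $[\bv]_{n+j}(x)$ is a $\sigma$-descendant of $[\bv]_n(x)$ — shows that whenever a level-$n$ pre-image $q$ lies in $O(\bv)$, $q$ and $[\bv]_n(x)$ are $\preceq$-comparable. Hence the symmetric relation $p_a\approx p_b$, defined by $p_a\in O(\bb_b)$ or $p_b\in O(\bb_a)$, is contained in the comparability relation of $\preceq$, so every $\approx$-chain has at most $n+1$ elements.

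Finally I would extract the subsequence. Discard the $\ba_j$ with $j\neq 0$ and $p_j\in O(\bb_0)$: at most $n+\nu+1$ words are removed, leaving at least $D-n-\nu$. By Mirsky's theorem the corresponding pre-images are a union of at most $n+1$ $\preceq$-antichains, one of which, $A$, has at least $\lceil(D-n-\nu)/(n+1)\rceil$ elements; since $D>n^3$ and by the choice of $n_0$ (which gives $\kappa<\lfloor D/(n+1)\rfloor-n-\nu-2$) this exceeds $\kappa$. Setting $\bb_0=\ba_0$ and choosing $\bb_1,\dots,\bb_\kappa$ among the remaining words whose pre-images belong to $A$ defines $\hat\sigma=(\bb_0,\dots,\bb_\kappa)$. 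To verify it: for $1\le l'\le\kappa$ and $l\neq l'$, if $l=0$ then $p_{l'}\notin O(\bb_0)$ by the discarding step, and if $l\ge1$ then $p_{l'},p_l\in A$ are $\preceq$-incomparable, so $p_{l'}\notin O(\bb_l)$; by the conversion of the first step, in both cases $\phi^{[\bb_{l'}]_n}_{i',j'}\big([\Phi_{x,y}(\bb_l)]_i(y)\big)=0$ for all $i',j'$, all $0\le i\le n+\nu$ and all $y\in B(x,\epsilon_0)$, which is exactly the claim. It is essential here that only indices $l'\ge1$ play the role of the deformation, so $\ba_0$ itself need not sit in the antichain $A$ — this is what allows $\ba_0$ to be arbitrary.

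I expect the main work to be the first step: choosing $\epsilon_0$ so that the geometry pins down the \emph{exact} pre-image $[\bb_l]_i(x)$ rather than merely a point $\mathcal{E}$-equivalent to it, while keeping track of the affine branches $h_{i,x}^{\bb_l}$ uniformly over all $y\in B(x,\epsilon_0)$ and handling the ranges $i\le n$ and $i>n$ together. Once that conversion is in place, the remaining combinatorics is bookkeeping, provided the constants are matched to the inequality defining $n_0$.
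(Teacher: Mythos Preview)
Your proof is correct and follows essentially the same approach as the paper: both convert the $\phi$-vanishing condition to a combinatorial statement about level-$n$ pre-images of $x$, use that the forward-orbit relation on these pre-images has chains of length at most $n+1$ to extract a large antichain, and discard at most $n+\nu+1$ further elements to accommodate $\ba_0$. Your version is more explicit --- naming the forest structure and Mirsky's theorem, splitting off the non-periodic case as trivial, and adding a separation requirement on $\epsilon_0$ --- whereas the paper absorbs the antichain extraction into the single bound $\#\mathcal{E}(\cdot,x)\le n+1$, but the underlying argument is the same.
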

		\begin{proof}[Proof of the Claim] First, let us note that
			the cardinality of each $\mathcal{E}([\ba_j]_{n},x)$ is at most $n+1$.
Actually, if some $\ba\in I^n(x)$ is in $\mathcal{E}([\ba_j]_n,x)$ then $x$ is periodic and there must exist at most one $\ba\in I^n(x)$ such that $E^i(x)=\ba(x)$ for some $i\geq 0$. So,
			for each $0\leq i<n$ there is at most one $\ba\in I^n(x)$ such that $\ba(x)=E^{i}([\ba_j]_{n}(x))$,
			and there exists also at most one $\ba \in I^n(x)$ and one $i\geq n$ such that $\ba(x)=E^{i}([\ba_j]_{n}(x))$.
			
			
			As a consequence, there are at least $\lfloor \frac{D}{n+1}\rfloor$ elements $\{ \ba_{\xi(j)} \}$ 
			such that
$[\ba_{\xi(j')}]_n \notin \mathcal{E}([\ba_{\xi(j)}]_n,x)$ for 
				$ j'\neq j$.
				Then $\phi^{[\ba_{\xi(j')}]_n}_{i',j'}([\Phi_{x,y}(\ba_{\xi(j)})]_i(y)) =0 $
				for every $i=0,\cdots,n+\nu$ and every $y\in B(x,\epsilon_0)$
				, if  $j\neq j'$.
			
Since all the points $[a_j]_n(x)$ are distinct,  for each $j=0,1,\cdots,n+\nu$ there exists at most one $l=l(j)$ such that
				$ B([\ba_0]_j (x),\epsilon_0) \cap B([\ba_{l(j)}]_n(x),\epsilon_0)  \neq \emptyset$.
		So~we~can choose a subsequence $\sigma=(\bb_{0},\bb_{1},\cdots,\bb_{\kappa})$ such that $\bb_0=\ba_0$ and $\{\bb_j\}$ are taken among the $\{ \ba_{\xi(j)} \}$ and out of $\{\ba_{l(\hat{j})}\}_{\hat{j}=0,1,\cdots,n+\nu}$.
This choice implies that
				$\phi^{[\bb_l]_n}_{i',j'}([\Phi_{x,y}(\bb_0)]_i(y)) =0 $ for every $y\in B(x,\epsilon_0)$, $l=1,\cdots,\kappa$ and $0\leq i \leq n+\nu$.
		\end{proof}
		
Given any sequence $(\ba_0,\ba_1,\cdots,\ba_D)$  in $I^\infty(y)$ with $[\ba_i]_n$ distinct, we consider $\hat{\ba}_i:=\Phi_{x,y}^{-1}(\ba_i)$ and the sequence $\sigma_0=(\hat{\ba}_0,\hat{\ba}_1,\cdots,\hat{\ba}_D)$ in $I^\infty(x)$, which also has $[\hat{\ba}_i]_n$ distinct, and consider $\hat{\sigma}_0=(\hat{\bb}_0,\hat{\bb}_1,\cdots,\hat{\bb}_\kappa)$ the subsequence of $\sigma_0$ given by Claim \ref{subsequence}.
			Denoting $\bb_l = \Phi_{x,y}(\hat{\bb}_l) \in I^\infty(y)$ for $l=0,1,\cdots,\kappa$ and considering $\sigma=(\bb_0, \bb_1,\cdots,\bb_\kappa)$ a subsequence of $(\ba_0,\ba_1,\cdots,\ba_D)$. We will prove in the following that $\Jac(\fpsi_{y,\sigma})>\frac{1}{2}$.
		
		Denoting $\bt\in \R^s$ by $\bt=(T^{\ba})_{\ba\in I^{n}}$, where $T^{\ba}=[t^{\ba}_{i',j'}]$ is a $d\times u$ matrix and $f_{\bt}(y)=f(y)+\sum_{\ba, i',j'} t^{\ba}_{i',j'} \phi^{\ba}_{i',j'}(y)$, $\ba\in I^n(x)$, $1\leq i'\leq d$, $1\leq j'\leq u$.
		The map $\fpsi_{y,\sigma}: \R^s \to (\R^{ud})^{\kappa}$
		is an affine map of the form
		\begin{equation}
		\fpsi_{y,\sigma}(\bt)= A_{f}(y) + \sum_{\ba,i',j'} t_{\ba;i',j'} L_{\ba;i',j'}(y)
		\end{equation}
		where the $l$-th coordinate of the linear term $L_{\ba;i',j'}$, $1\leq l \leq \kappa$ is given by the series
		\begin{equation}
		[L_{\ba;i',j'}(y)]_{l}= \sum_{i=1}^{\infty}  C^{i-1} \big( D\phi^{\ba}_{i',j'} ( [\bb_{l}]_{i}(y)) - D\phi^{\ba}_{i',j'} ( [\bb_{0}]_{i}(y)) \big) E^{-i}
		\end{equation}
		
		
		Consider $W_{0}= \{ (T^{\ba})_{\ba\in I^n(x)} | T^{\ba}=0 \text{ if } \ba\neq [\hat{\bb}_{l}]_n \text{ for every } 1\leq l \leq \kappa \}$ a $\kappa u d$-dimensional subspace of $\R^{s}$. We identify a point  $\tilde{\bt} \in W_0$ with the vector $(\tilde{t}^{\bb_l}_{i',j'})$, $1\leq l \leq \kappa$, $1\leq i'\leq u$, $1\leq j'\leq d$,
		satifying
		$[T^{[\bb_l]_n}]_{i',j'}= \tilde{t}^{\bb_l}_{i',j'}$
		and
		$\tilde{t}^{\ba}_{i',j'}=0$ if $\ba$ is none of the $\bb_{l}$'s.
		
		From Claim \ref{subsequence}, for every $0\leq l \leq \kappa$, $1\leq l'\leq \kappa$ and $i=0,1,\cdots,n+\nu$, the value of $\phi^{[\bb_{l'}]_n}_{i',j'}([\Phi_{x,y}(\bb_l)]_i(y))$ is non-zero only if $l=l'$ and $i=n$.
		
		If $\tilde{t}^{\bb_l}_{i',j'}  E_{i',j'} \in W_{0}$ for some $l, i', j'$, then:
		\begin{align*}
		[L_{\bb_{l'};i',j'}(y)]_{l} = \delta_{l,l'} \tilde{t}^{\bb_l}_{i',j'} E_{i',j'}  + [R_{\bb_{l'},i',j'}]_l
		\end{align*}
		where $\delta_{l,l'}=1$ if $l=l'$ and $0$ otherwise, and
		$R_{\bb_{l'},i',j'}$ has $l$-th coordinate given by
		$$[R_{\bb_{l'},i',j'}]_l  = \sum_{i>n+\nu}^{\infty}  C^{i-1} \big( D\phi^{[\bb_{l'}]_{n}}_{i',j'} ( [\bb_{l}]_{i}(y)) - D\phi^{[\bb_{l'}]_{n}}_{i',j'} ( [\bb_{0}]_{i}(y)) \big) E^{-i}$$
		
		Each linear map $R_{\bb_{l'},i',j'}: (\R^{ud}) \rightarrow (\R^{ud})$ has the norm of supremum bounded by
		$$  u   \sum_{r> n+\nu} 4 \overline{\lambda}^{n+1}\underline{\mu}^{-n} \overline{\lambda}^{r-1}\underline{\mu}^{-r} \leq
		4 u   \sum_{r> n+\nu} \overline{\lambda}^{r+n}\underline{\mu}^{-r-n} \leq
		4u \overline{\lambda}^{\nu}\underline{\mu}^{-\nu}(1-\overline{\lambda}\underline{\mu}^{-1})^{-1}$$
		
		So the matrix $D\fpsi_{y,\sigma}{|_{W_{0}}} : W_{0} \rightarrow (\R^{ud})^{k}$ is the sum of the identity matrix with another matrix with norm bounded by:
		$$ 4\kappa du^2 \overline{\lambda}^{\nu}\underline{\mu}^{-\nu}(1-\overline{\lambda}\underline{\mu}^{-1})^{-1} \leq
		4 du^2 N^n \overline{\lambda}^{\nu}\underline{\mu}^{-\nu}(1-\overline{\lambda}\underline{\mu}^{-1})^{-1}
		=:\epsilon(\nu)$$
		
		This implies that $\Jac(\fpsi_{y,\sigma}|_{W_0}) $ is the determinant of a matrix whose entries in the diagonal are greater than $1-\epsilon(\nu)$ and the other entries have absolute value at most $\epsilon (\nu)$. By \cite{determinant}, its determinant is bounded below at least by
		\begin{equation}\label{nu}
		(1-\epsilon(\nu))^{\kappa ud} \geq
		\big( 1 - 4 du^2 N^n \overline{\lambda}^{\nu}\underline{\mu}^{-\nu}(1-\overline{\lambda}\underline{\mu}^{-1})^{-1} \big)^{N^n ud} > \frac{1}{2}
		\end{equation}
		
		Following that
		$\Jac(\fpsi_{y,\sigma}) \geq \Jac(\fpsi_{y,\sigma}|_{W_0})  > \frac{1}{2}$.
	\end{proof}
	
	\subsection{The amount of non-transversality is not too big in generic families}
In order to prove Proposition 2.6, we will first see that if $T$ does not satisfy the transversality conditions then there must be a big amount of words $\ba_i \in I^{\infty}$ that are non-transversal in some $\bc$ with distinct truncations $[\ba_i]_n$.
	
	For each $q\geq 0$, we consider an integer $p=p(q):=\lfloor\frac{-q \log\theta + \log \sqrt{d}}{\log \underline{\mu}}\rfloor + 1$ (which satisfies $\sqrt{d} \underline{\mu} ^{-p} < \theta^q$) and we consider the constant $B={\lim}_{{q\to\infty}}\frac{p(q)}{q} = \frac{\log\theta^{-1}}{\log \underline{\mu}} $. Fix
	one point $x_\bc \in \mathcal{R}(\bc)$ for each $\bc \in I^{p(q)}$. Let us fix integers $n_0, D_0, \kappa_{0}\geq 2$ such that
	\begin{align}
	(D_{0}+1)J^{-\frac{n_{0}}{2}} &<\frac{1}{2} \label{n_0}\\
	N^{\kappa_0 + B+1} \theta^{(u-d+1)\kappa_0}  &< 1   \label{kappa_0}\\
	\kappa_{0}+1  &< \frac{D_{0}}{2n_{0}}  \label{D_0}\\
	n_{0}^3  &< D_0  \label{D0}
	\end{align}
	
	There exists such integers above because $J>1$ and because the map $C$ is in $C(d;\sE)$. The choice of each one will used in the continuation:  (\ref{n_0}) is used in (\ref{D_00}), (\ref{kappa_0}) is used in (\ref{final.volume}),  (\ref{D_0}) and (\ref{D0}) are used in the proof of Proposition \ref{zero.measure} to get $\kappa=\kappa_0$ for $D=D_0$ in the definition of $n_0$-generic family.
	
	\begin{lemma}\label{tangencies}
		If $f\in\mathcal{T}$, then for every $q_0 \geq 1$ there exists $q>q_0$ such that there exists
		a word $\bc \in I^{p(q)}$  and
		$1+D_0$ words $\ba_{i}\in I^q(\bc)$ with $[\ba_i]_{n_0}$ distinct
		such that
		for any  $1\leq i \leq D_0$:
		$$m( DS(x_\bc,\ba_{i}) - DS(x_\bc,\ba_{0})  ) \leq 6  \theta^{q} \alpha_0$$
	\end{lemma}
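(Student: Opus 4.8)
The plan is to prove the lemma in three moves: first use the hypothesis $f\in\mathcal T$ together with the $\min$ over $p$ in the definition of $\tau(q)$ to locate, for a well-chosen large $q$, a cylinder $\bc\in I^{p(q)}$ and a word $\ba_0\in I^q(\bc)$ with very many non-transversal partners on $\bc$; then transfer the two-point failure of the transversality inequality \eqref{transversality} into an estimate at the single point $x_\bc$, which is exactly where the constant degrades from $3$ to $6$; and finally refine this family combinatorially so that the selected words have pairwise distinct $n_0$-truncations. For the first move, since $\limsup_{q\to\infty}\frac{\log\tau(q)}{q}\ge\log J>0$ there is a sequence $q\to\infty$ along which $\tau(q)>J^{q/2}$, so given $q_0$ I take such a $q>q_0$ with $\tau(q)$ as large as desired. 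Evaluating $\min_{p\ge1}\max_{\bc\in I^p}\max_{\ba\in I^q(\bc)}\#\{\bb\in I^q(\bc)\mid \ba\text{ not transversal to }\bb\text{ on }\bc\}$ at $p=p(q)$ already gives a value $\ge\tau(q)$, hence there exist $\bc\in I^{p(q)}$, $\ba_0\in I^q(\bc)$ and a set $\mathcal B\subset I^q(\bc)$ with $\#\mathcal B\ge\tau(q)>J^{q/2}$ such that no $\bb\in\mathcal B$ is transversal to $\ba_0$ on $\bc$.

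For the localization step, fix $\bb\in\mathcal B$. Failure of \eqref{transversality} yields $x_1,x_2\in\overline{\mathcal R(\bc)}$ with $\fm\big(DS_\bc(x_1,\ba_0)-DS_\bc(x_2,\bb)\big)\le 3\theta^q\alpha_0$. Using the sub-additivity $\fm(A+B)\le\fm(A)+\|B\|$ of the smallest singular value and the uniform bound $\|S_\bc(\cdot,\cdot)\|_{C^2}\le\alpha_0$, which makes $x\mapsto DS_\bc(x,\cdot)$ an $\alpha_0$-Lipschitz map on $\overline{\mathcal R(\bc)}$, I would estimate
$$\fm\big(DS(x_\bc,\ba_0)-DS(x_\bc,\bb)\big)\ \le\ 3\theta^q\alpha_0+2\alpha_0\,\diam\mathcal R(\bc).$$
Since $\bc\in I^{p(q)}$ we have $\diam\mathcal R(\bc)\le\underline\mu^{-p(q)+1}\diam\mathcal R$, and the defining inequality $\sqrt d\,\underline\mu^{-p(q)}<\theta^q$ of $p(q)$ makes the last term at most $\tfrac32\theta^q\alpha_0$ once the Markov partition is chosen fine enough at the outset (this only shrinks $\gamma$ and does not change the value of $p(q)$). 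This gives $\fm\big(DS(x_\bc,\ba_0)-DS(x_\bc,\bb)\big)\le 6\theta^q\alpha_0$ for every $\bb\in\mathcal B$, which is the inequality in the statement.

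It then remains to choose, among $\{\ba_0\}\cup\mathcal B$, a subfamily $\ba_0,\ba_1,\dots,\ba_{D_0}$ with pairwise distinct $[\ba_i]_{n_0}$ — possibly after replacing $q$ by a smaller scale $q'>q_0$, which is harmless since the statement only asks for \emph{some} $q>q_0$. If $\{[\bb]_{n_0}\mid\bb\in\mathcal B\}$ already has more than $D_0$ elements one is done. Otherwise $\mathcal B$ is concentrated in at most $D_0$ of the $n_0$-cylinders, so one of them, say $\bd$, carries a sub-family of at least $\#\mathcal B/D_0$ words $\bb=\bd\bb'$; using the self-similar splitting $DS(x_\bc,\bd\bb')=DS(x_\bc,\bd)+\sC^{n_0}DS(\bd(x_\bc),\bb')E^{-n_0}$ one passes to scale $q-n_0$ over the point $\bd(x_\bc)$ and iterates. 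After $k$ descents there would be $\ge\#\mathcal B/D_0^{k}$ words sharing their first $kn_0$ symbols, while at most $N^{q-kn_0}$ words can do so; taking $k=\lfloor q/n_0\rfloor$, using $\#\mathcal B>J^{q/2}$ and the inequality $D_0<J^{n_0/2}$ provided by \eqref{n_0}, this forces $(J^{1/2}/D_0^{1/n_0})^{q}<N^{n_0}$, which is impossible for $q$ large because $J^{1/2}/D_0^{1/n_0}>1$. Hence the concentration cannot persist at all levels, and at the first level where it breaks one should be able to read off the required family (for the corresponding scale).

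The main obstacle is precisely this descent. Because $\fm$ is only a pseudo-distance, the fact that the derivatives $DS(x_\bc,\bb)$ of all non-transversal partners of $\ba_0$ lie in a common small $\fm$-neighborhood does \emph{not} make them pairwise close in operator norm, so the non-transversality of the tails $\bb'$ at the smaller scale is not automatic; moreover the sandwich by $\sC^{n_0}(\cdot)E^{-n_0}$ distorts $\fm$ by the factor $(\overline\lambda/\underline\lambda)^{n_0}(\overline\mu/\underline\mu)^{n_0}$, which must be controlled. Handling this honestly seems to require quantifying the geometrically decaying influence of deep symbols on $DS(x_\bc,\cdot)$, together with the spectral gap $\|\sC\|<\|\sE^{-1}\|^{-1}|\det\sE|^{-1/(u-d+1)}$ built into the hypothesis $\sC\in C(d;\sE)$ and the companion choice \eqref{kappa_0}; this is the technical heart of the genericity half of the paper and is where I expect the real work to lie.
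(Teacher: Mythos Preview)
Your three-step plan — locate a large non-transversal family at a big scale, localize the two-point failure of \eqref{transversality} to the single point $x_\bc$, then descend combinatorially to extract words with distinct $n_0$-truncations — is exactly the paper's strategy, and your localization (the degradation from $3$ to $6$ via the $C^2$-bound on $S_\bc$ and the choice of $p(q)$ with $\sqrt d\,\underline\mu^{-p(q)}<\theta^q$) matches the paper's almost verbatim (it gets $5$ first and then $6$ after moving to $x_\bc$). The descent is also the same in spirit: the paper starts at a very large scale $\tilde q$ with a family $L\subset I^{\tilde q}(\tilde\bc)$ of size $\ge J^{\tilde q}$ centered at some $\tilde\bu_0$, sets $h_j=\max_{\ba}\#\bigl(L(j,\ba)\cap L(j-1,\tilde\bu_0)\bigr)$, takes the \emph{first} $j_*$ with $h_{j_*+1}<J^{-n_0/2}h_{j_*}$, and then \eqref{n_0} forces at least $D_0+1$ distinct $(j_*{+}1)n_0$-truncations inside the maximizing set $H(j_*,\bv_0)$; one factors out the common prefix $\bb$ of length $j_*n_0$ and lands at $q=\tilde q-j_*n_0$. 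This is just a cleaner packaging of your pigeonhole descent, and your counting $(J^{1/2}/D_0^{1/n_0})^q\to\infty$ is the same mechanism as $h_j\le N^{u(\tilde q-jn_0)}$ forcing a first drop.

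Where your sketch and the paper differ is precisely at the two obstacles you flag. For the $\fm$-not-a-metric issue, the paper never needs a pairwise comparison: by carrying the intersection with $L(j-1,\tilde\bu_0)$ through the descent it keeps the \emph{original} center $\tilde\bu_0$ aligned with the selected words up to depth $(j_*{-}1)n_0$, so after factoring out $\bb$ every $\ba_i$ is still compared to the \emph{same} $\ba_0$ (the tail of $\tilde\bu_0$), and the bound $\fm(DS(x,\tilde\bu_i)-DS(x,\tilde\bu_0))\le 5\theta^{\tilde q}\alpha_0$ is the one inherited from membership in $L$ — no triangle inequality for $\fm$ is ever invoked. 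Your naive descent (restrict to the fattest $n_0$-cylinder, which need not contain $\ba_0$) discards the common reference, and that is exactly why it stalls. As for the sandwich distortion $\fm(C^kME^{-k})\mapsto\fm(M)$, the paper simply passes from $5\theta^{\tilde q}\alpha_0$ to $5\theta^{q}\alpha_0$ in one line; it does \emph{not} call on the spectral hypothesis $\sC\in C(d;\sE)$ or on \eqref{kappa_0} here — those constants enter only in the volume estimate \eqref{final.volume} of Proposition~\ref{zero.measure}, not in this lemma. So your expectation that the heavy machinery of the genericity argument is needed at this point is misplaced; the lemma is purely combinatorial once one tracks $\tilde\bu_0$ correctly.
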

	
	\begin{proof}
		Given $q_0$, we consider $\tilde{q}_0$ such that $\tilde{q}_0 \frac{ \log J}{ 2u\log N} >q_0$. By the transversality condition, we can take $\tilde{q}>\tilde{q}_0$ large such that  there exists
		a word $\tilde{\bc}\in I^{p(\tilde{q})}$,
		a subset $L\subset I^{\tilde{q}} (\bc)$ and
		some $\tilde{\bu}_0\in L$ such that $\# L \geq J^{\tilde{q}}$ and for every $\tilde{\bu} \in L$ there exists points $x_{\tilde{\bu}}$ and $y_{\tilde{\bu}}$ in $\mathcal{R}(\tilde{\bc})$ satisfying:
		$$ m( DS(x_{\tilde{\bu}},\tilde{\bu}) - DS(y_{\tilde{\bu}},\tilde{\bu}_0) ) \leq  3\theta^q \alpha_0 $$
		
		Fixing some $x\in \mathcal{R}(\bc)$,
		for each $d$-dimensional subspace $W$ there exists an unitary vector $v_W \in W$ such that
		$\| \big( DS(x_{\tilde{\bu}},\tilde{\bu})  - DS(y_{\tilde{\bu}},\tilde{\bu}_0) \big)v_W \| \leq 3\theta^{\tilde{q}} \alpha_0 $
		which implies that the value of $\| (DS(x,\tilde{\bu}) - DS(x,\tilde{\bu}_0)) v_W \|$ is at most:
		\begin{align*}
		\| DS(x,\tilde{\bu}) v_W - DS&(y_{\tilde{\bu}},\tilde{\bu}_0) v_W \| + \| DS(y_{\tilde{\bu}},\tilde{\bu}_0) v_W - DS(x_{\tilde{\bu}},\tilde{\bu}_0) v_W \| \\
		&+ \| DS(x_{\tilde{\bu}},\tilde{\bu}_0) v_W - DS(x,\tilde{\bu}_0) v_W \| \\
		&\leq 3\theta^{\tilde{q}} \alpha_0 +  \sqrt{d} \underline{\mu}^{-p(\tilde{q})} \alpha_0 +  \sqrt{d} \underline{\mu}^{-p(\tilde{q})} \alpha_0 \leq 5  \theta^{\tilde{q}} \alpha_0
		\end{align*}
		
		Taking the supremum on $W$, we have
		$$m( DS(x,\tilde{\bu}) - DS(x,\tilde{\bu}_0) )\leq 5  \theta^{\tilde{q}} \alpha_0 $$
		
		To obtain words $\ba_i$ with distinct truncations $[\ba_i]_{n_0}$,
		for each $0\leq j \leq \lfloor\tilde{q}/n_{0}\rfloor$ and $\ba \in L$, we define
		$$L(j,\ba) = \{ \bb\in L, [\ba]_{jn_0}=[\bb]_{jn_0} \}$$
		if $j\geq 1$ and $L(0,\ba)=L$.
		
		Note that $L(j_1,\ba) \supset L(j_2,\ba)$ if $j_1\leq j_2$ and that  $L(j,\ba_1) \cap L(j,\ba_2) = \emptyset$ if $[\ba_1]_{jn_0} \neq [\ba_2]_{jn_0}$.
		Define
		$$H(j,\ba):= \{\bb\in L | \bb \in L(j,\ba) \cap  L(j-1,\bu_0)\} \text{ and   } h_{j}:=\underset{a\in L}{\max} \#H(j,\ba) $$
		for $j\geq 1$ and $h_{0}=\# L \geq J^{\tilde{q}}$.
		
		Since $h_{j}\leq N^{u(\tilde{q}-j n_0)}$ , there exists $j\leq\lfloor\tilde{q}/n_0\rfloor$ such that $h_{j+1}< J^{- n_0/ 2} h_{j}$. Let $j_{*}$ be the minimum of such integers $j$ and put $q=\tilde{q} - n_0 j_{*}$. By the minimality of $j_*$ we have that $h_{j_{*}}\geq J^q $. We also have that $q\geq \tilde{q} \frac{\log J}{2u\log N} > q_0 $.
		
		Considering $\bv_0$ such that $h_{j_*} = \# H(j_*,\bv_0)$, the set $H(j_*,\bv_0)$ contains at least $1+D_0$ sets $H(j_*+1,\tilde{\bu}_i)$, that is, there exists $\tilde{\bu}_1,\cdots, \tilde{\bu}_{D_0} $ such that $[\tilde{\bu}_{i'}]_{(j_*+1)n_0}$ are distinct for  $i'=0,1,\cdots,D_0$,
		because
		\begin{align}\label{D_00}
		h_{j_{*}} - (D_0 +1) h_{j_{*}+1} > h_{j_{*}}-(D_0 +1)J^{- n_0/2} h_{j_{*}}>0
		\end{align}
		
		So, there exists $\bb\in I^{\tilde{q}-q}$ and $\ba_{i}\in I^{q}$, $0\leq i \leq D_0 $ such that $\bb\ba_{i}=\tilde{\bu}_i \in H(j_*,\bv_0)$ for $0\leq i \leq D_0  $  and that $[\ba_{i}]_{n_0}\neq [\ba_{j}]_{n_0}$ if $i\neq j$.
		
		Finally, from
		\begin{equation}
		DS(x,\bb\ba_i)=DS(x,\bb)+C^{\tilde{q}-q}DS(\bb(x),\ba_i)E^{-\tilde{q}+q}
		\end{equation}
		we have that:
		$$ m( DS(\bb(x),\ba_{i}) - DS(\bb(x),\ba_{0}) ) \leq   5  \theta^{q} \alpha_0 $$
		
		Taking $\bc\in I^{p(q)}$ such that $\bb(x)\in\mathcal{R}(\bc)$, we have  $\|\bb(x) - x_c \|\leq \sqrt{d}  \underline{\mu}^{-p(q)} \leq  \theta^q$, which implies the Lemma.
	\end{proof}
	
	To finish the proof of Proposition \ref{zero.measure}, we will use two lemmas of linear algebra.
	
	\begin{lemma}\label{jacobian}
		Given integers $m$ and $k$, there exists a constant $C_3>0$ such that if  $G:\R^s \to \R^k$ is an affine function with  $\Jac(\fpsi_{x,\sigma})>\delta$, then
		\begin{equation}
		m_s (G^{-1}(Y)\cap [-1,1]^s) \leq C_3 \frac{m_k(Y)}{\delta}
		\end{equation}
		for every measurable set $Y\subset \R^k$.
	\end{lemma}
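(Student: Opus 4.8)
The plan is to reduce the statement to the case where $G$ is linear and then to a coordinate computation using the hypothesis $\Jac(G)>\delta$. First I would observe that it suffices to treat the linear part: writing $G(\bt)=G(0)+L\bt$ with $L=DG$ a fixed linear map, the set $G^{-1}(Y)$ is a translate (by $-L^{-1}$-type correction, or more precisely one argues directly with $L$) of $L^{-1}(Y-G(0))$, and $m_k(Y-G(0))=m_k(Y)$, so the estimate for $G$ follows from the same estimate for $\bt\mapsto L\bt$ applied to the translated set $Y-G(0)$. Hence I may assume $G=L$ is linear with $\Jac(L)>\delta$, where $\Jac(L)=\sup_{\dim W=k}|\det(L|_W)|$ (using the convention already fixed in the paper for the Jacobian of a restriction to a $k$-dimensional subspace).

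Next, by the definition of $\Jac$, there is a $k$-dimensional subspace $W\subset\R^s$ on which $L|_W:W\to\R^k$ is invertible with $|\det(L|_W)|\geq\Jac(L)>\delta$. Let $P:\R^s\to W$ be the orthogonal projection and decompose $\R^s=W\oplus W^{\perp}$. For any measurable $Y\subset\R^k$ and any fixed $w'\in W^{\perp}$, the slice $\{w\in W: L(w+w')\in Y\}$ equals $(L|_W)^{-1}(Y-L(w'))$, an affine image of $Y$ under the inverse of $L|_W$; its $k$-dimensional Lebesgue measure (inside $W\cong\R^k$) is $|\det(L|_W)|^{-1}m_k(Y-L(w'))=|\det(L|_W)|^{-1}m_k(Y)\leq\delta^{-1}m_k(Y)$. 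Now I apply Fubini's theorem with respect to the orthogonal splitting $\R^s=W\times W^{\perp}$:
\begin{align*}
m_s\big(L^{-1}(Y)\cap[-1,1]^s\big)
&\leq m_s\big(L^{-1}(Y)\cap ([-1,1]^s\text{'s projection to }W^\perp)\big)\\
&= \int_{\pi_{W^\perp}([-1,1]^s)} m_k\big((L|_W)^{-1}(Y-L(w'))\big)\,dm_{s-k}(w')\\
&\leq \frac{m_k(Y)}{\delta}\cdot m_{s-k}\big(\pi_{W^\perp}([-1,1]^s)\big).
\end{align*}
The projection of the cube $[-1,1]^s$ onto any $(s-k)$-dimensional subspace has $m_{s-k}$-measure bounded by a constant depending only on $s$ and $k$ (for instance by $(2\sqrt{s})^{s-k}$, since the cube is contained in the ball of radius $\sqrt{s}$); call this constant $C_3$. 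This gives the claimed inequality $m_s(L^{-1}(Y)\cap[-1,1]^s)\leq C_3\,m_k(Y)/\delta$, and hence the same bound for the original affine $G$.

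The only point that needs a little care — and the place I'd expect a careless argument to slip — is the measurability and the Fubini step: one must make sure the slices depend measurably on $w'$ and that the bound on each slice is uniform in $w'$, which is exactly why I pass to a \emph{fixed} optimal subspace $W$ first (so $L|_W$ and its determinant are constant) rather than trying to vary the witnessing subspace. The hypothesis is used precisely once, to bound $|\det(L|_W)|^{-1}\leq\delta^{-1}$ on the chosen $W$; note it is crucial that we only need a \emph{lower} bound on the Jacobian of \emph{some} restriction, not on all of $L$, since $s$ may be much larger than $k$ and $L$ need not be injective. The constant $C_3=C_3(s,k)$ is geometric and independent of $\delta$, $Y$, and the particular $G$, as required. (In the application $s$ is the number of perturbation functions $\phi_k$ and $k=\kappa ud$ is the dimension of the target $(\R^{ud})^\kappa$, so both are fixed once the generic family is constructed.)
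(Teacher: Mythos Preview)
Your argument is correct and is precisely the natural elaboration of what the paper records only as ``Immediate from the Jacobian of an affine map'': pick a $k$-dimensional subspace $W$ realizing $|\det(L|_W)|>\delta$, slice via Fubini along $W\oplus W^\perp$, and bound the $W^\perp$-projection of the cube by a dimensional constant. The one line that reads a bit informally is the first displayed inequality (the set ``$[-1,1]^s$'s projection to $W^\perp$'' should be interpreted as $\pi_{W^\perp}^{-1}\big(\pi_{W^\perp}([-1,1]^s)\big)=W\times\pi_{W^\perp}([-1,1]^s)$), but the intent and the subsequent computation are clear and correct.
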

	\begin{proof}
		Immediate from the Jacobian of an affine map.
	\end{proof}
	
	\begin{lemma}\label{volume}
		For every $u\geq d$, there exists a constant $C_4$  such that the set $$\mathcal{X}(r):=\{ M \in M(d \times u ) , \|M\| \leq 2\alpha_0 , m(M) <r \}$$ has volume bounded by $C_4 r^{u-d+1}$ for every $r>0$.\footnote{The authors thanks A. Quas for pointing this Lemma.}
	\end{lemma}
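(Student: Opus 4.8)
The plan is to stratify the set $\mathcal{X}(r)$ according to which $d$-dimensional coordinate subspace $W\subseteq\R^u$ realizes (up to a fixed constant) the smallest singular value $\fm(M)$, and to estimate the volume of each stratum by slicing. Recall $\fm(M)=\sup_{\dim W=d}\inf_{v\in W,\|v\|=1}\|Mv\|$, so if $\fm(M)<r$ then for \emph{every} $d$-dimensional $W$ there is a unit vector $v_W\in W$ with $\|Mv_W\|<r$. Conversely, if $\fm(M)\geq r$ then there is some $W$ on which $M$ is bounded below by $r$. I would first reduce to a compactness/covering argument: fix a finite set of $d$-dimensional coordinate subspaces $W_1,\dots,W_\ell$ (spanned by subsets of the standard basis of $\R^u$, so $\ell=\binom{u}{d}$) with the property that any $d$-dimensional subspace is within a small angle of one of the $W_i$. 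Since $\|M\|\leq 2\alpha_0$, being small on a $d$-plane close to $W_i$ forces $M$ to be comparably small on $W_i$ itself; hence $\mathcal{X}(r)\subseteq\bigcup_{i=1}^{\ell}\mathcal{Y}_i(Cr)$ where $\mathcal{Y}_i(s):=\{M:\|M\|\leq 2\alpha_0,\ \inf_{v\in W_i,\|v\|=1}\|Mv\|<s\}$ and $C=C(u,d)$.

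It then suffices to bound the volume of a single $\mathcal{Y}_i(s)$. After permuting coordinates in $\R^u$ we may assume $W_i=\R^d\times\{0\}\subseteq\R^u$, so a matrix $M\in M(d\times u)$ splits as $M=[A\mid B]$ with $A\in M(d\times d)$ acting on $W_i$ and $B\in M(d\times(u-d))$ acting on the complement; the constraint $\|M\|\leq 2\alpha_0$ confines $(A,B)$ to a fixed bounded region, and the condition defining $\mathcal{Y}_i(s)$ only involves $A$, namely that the smallest singular value of the $d\times d$ matrix $A$ is less than $s$. By Fubini it is enough to show that the set of $d\times d$ matrices $A$ with $\|A\|\leq 2\alpha_0$ and smallest singular value $<s$ has $m_{d^2}$-measure at most $C s$; integrating the trivial bound $C(u,d)$ over the remaining $B$-coordinates then gives $m_{du}(\mathcal{Y}_i(s))\leq C' s$, hence $m_{du}(\mathcal{X}(r))\leq \ell C' (Cr) \leq C_4\, r \leq C_4\, r^{u-d+1}$ once we notice that for $u=d$ the exponent $u-d+1$ equals $1$, and for $u>d$ we can extract a \emph{higher} power of $r$ by not discarding the $B$-variables.

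Indeed, to get the sharp exponent $u-d+1$ I would not merely bound the $B$-block trivially but observe that smallness of $\fm([A\mid B])$ forces a genuinely codimension-$(u-d+1)$ condition: writing the singular-value constraint as the existence of a unit $v\in\R^u$ with $\|Mv\|<r$, one can parametrize by the direction $v$ (a sphere, $u-1$ parameters) and the image point $Mv$ (forced into a ball of radius $r$ in $\R^d$, giving a factor $r^d$), while the fiber of $(v,Mv)\mapsto M$ over a generic pair has the expected dimension; carrying this through with the coarea/Fubini formula and the uniform bound $\|M\|\leq 2\alpha_0$ yields $m_{du}(\mathcal{X}(r))\lesssim r^{d}\cdot r^{\,u-d-(d-1)\cdot 0}$-type estimates which, after carefully counting, collapse to $r^{u-d+1}$. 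The cleanest route is the recursive one: prove the statement for $d\times d$ matrices first (smallest singular value $<r$ has measure $\lesssim r$, e.g.\ from the fact that $\det$ is a polynomial and $|\det A|\leq \|A\|^{d-1}\fm(A)$, so the set is contained in $\{|\det A|< (2\alpha_0)^{d-1}r\}$, whose measure is $\lesssim r$ by the standard estimate on sublevel sets of a non-degenerate polynomial, cf.\ \cite{determinant}), and then for $u>d$ use the slicing above together with the observation that fixing the first $d-1$ columns, the condition on the remaining $u-d+1$ columns is that a certain $d\times(u-d+1)$ block is "short" in every direction, which is a $\lesssim r^{u-d+1}$-measure event in those $d(u-d+1)$ coordinates.

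The main obstacle is getting the exponent $u-d+1$ rather than just a positive power of $r$: the naive stratification loses the gain coming from the $u-d$ extra columns, so the crucial point is to realize that the bottleneck is exactly a single $d\times(u-d+1)$ sub-block and that requiring \emph{this} block to have operator norm below $r$ on some direction is already codimension $u-d+1$. Once that block is isolated, the estimate is the elementary one that $\{N\in M(d\times m):\|Nv\|<r\text{ for some unit }v\}$ has measure $\lesssim r^{m}$ when $m\le d$ (choose $v$, then $Nv$ lies in a radius-$r$ ball: $r^{d}$, but the ambient dimension only constrains $dm$ parameters, and the correct count gives $r^{m}$ after integrating out the $d-m$ "free" directions of $N$ transverse to the $v$-line) — applied with $m=u-d+1$. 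I would double-check the edge case $u=d$ (where $m=1$ and we recover the $\lesssim r$ bound for near-singular square matrices) and the normalization constants' dependence only on $u,d,\alpha_0$, which is all that is claimed.
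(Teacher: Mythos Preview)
Your proposal does not close the gap it itself identifies: obtaining the exponent $u-d+1$ rather than merely $r^1$. Your first stratification by coordinate $d$-planes correctly gives only a factor $r$, and your later attempts to recover the missing powers are either heuristic or incorrect. In particular, the claim that $\{N\in M(d\times m):\|Nv\|<r\text{ for some unit }v\}$ has measure $\lesssim r^{m}$ for $m\le d$ is false: this set is the $r$-neighbourhood of the rank-$(m-1)$ variety in $M(d\times m)$, whose codimension is $d-m+1$, so the correct bound is $\lesssim r^{\,d-m+1}$ (check $m=1$: the set is a ball of radius $r$ in $\R^d$, with measure $\sim r^d$, not $r$). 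Your application with $m=u-d+1$ outputs the target exponent only by coincidence, because the reduction to that sub-block is itself unjustified: fixing $d-1$ columns does not force the remaining $d\times(u-d+1)$ block to be ``short'' in the sense you need, since the near-singularity of each $d\times d$ minor may come from the fixed columns rather than from the free one.

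The paper's argument avoids columns entirely and works with the $d$ \emph{rows} $r(1),\dots,r(d)\in\R^u$ of $M$. The key step (Claim~\ref{bounded.factor}) is that $\fm(M)$ is comparable, up to a factor $\sqrt{d}$, to $\min_i d\bigl(r(i),\operatorname{span}_{j\ne i}r(j)\bigr)$. Once this is established the estimate is immediate: $\fm(M)<r$ forces some row to lie in the $\sqrt{d}\,r$-tube around the span of the remaining $d-1$ rows; that span is at most $(d-1)$-dimensional in $\R^u$, and its tube intersected with the ball of radius $2\alpha_0$ has volume $\lesssim r^{\,u-(d-1)}=r^{\,u-d+1}$. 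Integrating over the bounded choices of the other $d-1$ rows and summing over the $d$ possible indices $i$ gives the lemma. The moral is that $\fm$ of a wide matrix is most transparently read off from its rows, where the codimension count $u-(d-1)$ is manifest.
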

	\begin{proof}
		Let $r(i)$ be the $i$-th row of the matrix $M$. First, we claim that $\fm(M)$ is equals, up to a bounded factor, to $\min_i {d\big( r(i) , {\operatorname{span}}_{j\neq i} (r(j) \big)}$.
		
		\begin{claim}\label{bounded.factor} For every $M\in M(d\times u)$, it is valid that:
			$$\displaystyle \fm(M) \leq \min_i {d\Big( r(i) , \underset{j\neq i}{\operatorname{span}} (r(j) \Big)} \leq \sqrt{d} \fm(M)$$
		\end{claim}
		\begin{proof}[Proof Claim \ref{bounded.factor}]
			Consider $D:=\min_i {d\big( r(i) , \operatorname{span}_{j\neq i} (r(j) \big)}$. Remind that $\fm(M)=\fm(M^T)$ and  that $r(i)=M^T(e_i)$, where $M^T$ is the transpose matrix of $M$ and $\{e_i\}_{i=1,\cdots,d}$ the canonical basis of $(\R^d)^*$. Considering $v=(x_1,\cdots,x_d) \in (\R^d)^*$ an unitary vector such that $\|M^T(v)\|=\fm(M^T)=\fm(M)$ and $i_1$ such that $|x_{i_1}|=\max_j{|x_j|}$, we have that $|x_{i_1}|\geq\frac{1}{\sqrt{d}}$. Then
			\begin{align*}
			\sqrt{d} \fm(M) \geq \frac{\|M^T(v)\|}{|x_{i_1}|} 
			= \Big\| \frac{x_{i_1}}{|x_{i_1}|} r({i_1}) + \sum_{j\neq {i_1}} \frac{x_j}{|x_{i_1}|} r(j)\Big\| \geq d\Big(r({i_1}), \underset{j\neq {i_1}}{\operatorname{span}} (r(j)) \Big)  \geq D.
			\end{align*}
			
			On the other hand, let $i_{2}$ be such that $d\big(r(i_{2}), {\operatorname{span}}_{j\neq i_{2}} (r(j)) \big) = D$. There exists real numbers $b_j$, $j\neq i$, such that $D = \| r(i_{2}) + \sum_{j\neq i} b_j r(j) \|$. Then
			\begin{align*}
			D = \| M^T( e_{i_{2}} + \sum_{j\neq i} b_j e_j ) \| \geq \fm(M^T) \| e_{i_{2}} + \sum_{j\neq i} b_j e_j \| \geq \fm(M)
			\end{align*}
		\end{proof}
		
		For every choice of $d-1$ rows of $M$, their span is a $(d-1)$-dimensional subspace and its $r$-neighborhood has volume bounded by $C r^{u-(d-1)}$. Looking to each matrix $M$ as vector formed by its rows $(r(i))_{i=1,\cdots,d} \in (\R^u)^d$, we see that the set $\mathcal{X}(r)$ is contained in the finite union of the set of matrices where the row $r(i)$ is the $r$-neighborhood of the span of the other $d-1$ rows, following that the volume of $\mathcal{X}(r)$ is bounded above by $C_4 r^{u-d+1}$.
	\end{proof}
	
	\begin{proof}[Proof of Proposition \ref{zero.measure}]
		We consider $\{ \phi_{i}\}_{i=1}^{s}$ the functions given by Proposition \ref{generic.families} for $E$, $C$ and $n=n_{0}$.
		
		Fixing words of infinite length $\ba^1,\cdots,\ba^r \in I^{\infty}$ with $[\ba^i]_1 = i$, we associate for every word $\ba$ of finite length a word $\hat{\ba}=\ba \ba^i \in I^\infty$. For any sequence $\sigma=(\bb_i )_{i=0}^{\kappa_0}$ in $(I^{q})^{1+\kappa_0}$, denote $\hat{\sigma}=(\hat{\bb}_i )_{i=0}^{\kappa_0} \in I^\infty(\bc)$.
		
		Let us consider $\mathcal{T}_0 := \{ \bt \in \R^s | f_\bt \in \mathcal{T} \}$.
		If $\bt\in \mathcal{T}_0$,  Lemma \ref{tangencies} implies that for every integer $q_0$ there exists an integer $q\geq q_0$,
		a word $\bc \in I^{p(q)}$ and
		$1+D_0$ words $\ba_{i}\in I^q(\bc)$ with $[\ba_i]_{n_0}$ distinct
		such that $\fm( DS(x_\bc,\ba_{i}) - DS(x_\bc,\ba_{0})  ) \leq 6  \theta^{q} \alpha_0$
		for any  $i =1,\cdots, D_0$. In particular, it holds
		\begin{equation}
		\fm( DS(x_\bc,\hat{\ba}_{i}) - DS(x_\bc,\hat{\ba}_{0})  ) \leq 7  \theta^{q} \alpha_0.
		\end{equation}
		
We consider also the sets $\mathcal{B}^{q} = \{ (\sigma, \bc) \in (I^{q})^{1+\kappa_0} \times I^{p(q)} | \Jac(\fpsi_{x_{c},\hat{\sigma}})>\frac{1}{2} \}$ and  $\mathcal{T}(q):={\cup}_{(\sigma,\bc)\in\mathcal{B}^q} \fpsi_{x_\bc, \hat{\sigma}}^{-1}( \mathcal{X}(7 \theta^q \alpha_0)^{\kappa_0})$.
		
		Given $x_\bc$ and the sequence $(\hat{\ba}_{0},\hat{\ba}_{1},\cdots, \hat{\ba}_{D_0})$, the facts that  the family $T_{\bt}^{n_0}$ is $n_0$-generic,  that
			$\frac{D_0}{2n_0}>\kappa_0 $ and $D_0 > n_0^3$, it
		implies  that
		there exists a subsequence $\sigma=(\bb_0,\bb_1,\cdots,\bb_\kappa)\in (I^q)^{1+\kappa_0}$ such that
		each entry of $\fpsi_{x_\bc, \hat{\sigma}}(\bt)$, for $\bt\in \mathcal{T}_0$, is in the set
		$$\mathcal{X}(7\theta^q \alpha_0)= \{ M\in M(d\times u), \|M\| \leq 2 \alpha_0 \text{ and } m(A) < 7 \theta^q \alpha_0 \},$$
		what means that  $\mathcal{T}_0 \subset {\liminf}_{q\to\infty} \mathcal{T}(q)$.
		
		Since for every $i\in\overline{I}$ there are exactly $N$ sets $\mathcal{R}(j)$ such that $E(\mathcal{R}(j)) \cap \mathcal{R}(i) \neq \emptyset$, we get that
		$\# I^q = r N^{q-1}$
		and that
		\begin{equation}\label{cardinality}
		\#\mathcal{B}^q \leq r (r N^{(q-1)})^{(\kappa_0 +1)} (r N^{p(q)-1})\,.
		\end{equation}
		
		Putting together  Lemma \ref{jacobian}, Lemma \ref{volume} and (\ref{cardinality}), we get that the estimate
		\begin{align}\label{final.volume}
		m_s(\mathcal{T}(q) ) \leq  r^{3+\kappa_0} N^{q+q\kappa_0  + p(q)-2 -\kappa_0}   \big( (14C_4 \theta^q \alpha_0 )^{u-d+1}\big)^{\kappa_0}
		\end{align}
		is valid for infinitely many $q$'s.
		
		So, there is a constant $C_5>0$ such that the term in (\ref{final.volume}) is bounded above by
		\begin{equation}
		C_5 ( N^{\kappa_0 + B+1}  \theta^{(u-d+1)\kappa_0} )^q
		\end{equation}
		when $q$ is sufficiently large. By the choice of $\kappa_0$, it converges to zero exponentially  fast when $q \to +\infty$, implying the conclusion of Proposition \ref{zero.measure}.
	\end{proof}
	
	Finally, we are able to prove Theorem B and Corollary C
	
	\begin{proof}[Proof of Theorem B]
		Given the map $T=T(E,C,f)$ satisfying the assumptions of Theorem B, we consider
		the functions $\{\phi_k\}_{k=1}^s \in C^\infty(\torus,\R^d)$  given by Proposition \ref{generic.families} for some $n$ sufficiently large. Then the set of parameters $\bt=(t_1,\dots,t_s)$
		for which the corresponding  map $T_\bt$ satisfies the transversality condition has full Lebesgue measure. Theorem D implies that for such maps the
		SRB measure $\mu_{T_\bt^n}$ is absolutely continuous. Finally, Theorem B follows just noting that the SRB measure of $T_\bt^n$ is the same of $T_\bt$.
	\end{proof}
	
	\begin{proof}[Proof of Corollary C]
		When $d=1$, the map $C$ is just a multiplication by a factor $\lambda \in \R$ and if $E=\mu I$ for some integer $\mu\geq 2$ then the relations in the definition of $C(d,E)$  become $\lambda\in(\frac{1}{\mu^u},1)$. So we are under the assumptions of Theorem B.
	\end{proof}
	
\section*{Acknowledgments}

The authors thank Professor Marcelo Viana for his useful discussions and IMPA for its hospitality during the Summer Program.

\end{document}